\newtheorem{theorem}{Theorem}
\newtheorem{lemma}[theorem]{Lemma}
\newtheorem{prop}[theorem]{Proposition}
\theoremstyle{definition}
\newtheorem{definition}[theorem]{Definition}
\newtheorem{remark}[theorem]{Remark}
\newcommand{\lab}{\left|}
\newcommand{\rab}{\right|}
\newcommand{\bdry}{\partial_\infty}
\newcommand{\lp}{\left(}
\newcommand{\rp}{\right)}
\newcommand{\e}{\epsilon}
\newcommand{\R}{\mathbb{R}}
\newcommand{\C}{\mathbb{C}}
\newcommand{\Hy}{\mathbb{H}}
\newcommand{\Sp}{\mathbb{S}}
\newcommand{\acts}{\curvearrowright}
\newcommand{\Z}{\mathbb{Z}}
\newcommand{\N}{\mathbb{N}}
\newcommand{\ti}{\textit}
\DeclareMathOperator{\dist}{\textup{\text{dist}}}
\DeclareMathOperator{\diam}{\textup{\text{diam}}}
\DeclareMathOperator{\dimup}{\textup{\text{dim}}}
\DeclareMathOperator{\CAT}{\textup{\text{CAT}}}
\DeclareMathOperator{\AC}{\textup{\text{AC}}}
\DeclareMathOperator{\Vol}{\textup{\text{Vol}}}
\DeclareMathOperator{\topo}{\textup{\text{top}}}
\DeclareMathOperator{\vol}{\textup{\text{vol}}}
\DeclareMathOperator{\supp}{\textup{\text{supp}}}
\DeclareMathOperator{\inter}{\textup{\text{int}}}
\DeclareMathOperator{\GL}{\textup{\text{GL}}}
\numberwithin{equation}{section}
\numberwithin{theorem}{section}
\begin{document}

\title[Quasi-M\"obius actions on fractal spaces]{Rigidity for quasi-M\"obius actions on fractal metric spaces}
\author{Kyle Kinneberg}

\begin{abstract}
In \cite{BK02}, M. Bonk and B. Kleiner proved a rigidity theorem for expanding quasi-M\"obius group actions on Ahlfors $n$-regular metric spaces with topological dimension $n$. This led naturally to a rigidity result for quasi-convex geometric actions on $\CAT(-1)$-spaces that can be seen as a metric analog to the ``entropy rigidity" theorems of U. Hamenst\"adt \cite{Ham90} and M. Bourdon \cite{Bour96IHES}. Building on the ideas developed in \cite{BK02}, we establish a rigidity theorem for certain expanding quasi-M\"obius group actions on spaces with different metric and topological dimensions. This is motivated by a corresponding entropy rigidity result in the coarse geometric setting.
\end{abstract}

\date{\today}

\maketitle

\section{Introduction} \label{intro}

Let $(M^n,g)$ be a compact Riemannian manifold of dimension $n$. If we assume that $M$ is locally symmetric and negatively curved, then its universal cover is isometric to $\Hy^k_F$---one of the hyperbolic spaces defined over $F = \R, \C$, the quaternions, or the octonians (in the last case, only for $k=2$, which corresponds to real dimension $n=16$). We can therefore identify $(M^n,g)$ with the quotient $\Hy^k_F/\Gamma$, where $\Gamma = \pi_1(M)$ acts on $\Hy^k_F$ by deck transformations. A natural question then arises: Is this hyperbolic structure uniquely determined by the topology of $M$?

G. Mostow's classic rigidity theorem \cite{Most73} gives an affirmative answer to this question in dimensions $n \geq 3$. More specifically, he proves that if two locally symmetric compact manifolds, both with maximal sectional curvature $-1$, have isomorphic fundamental groups, then they are isometric. The curvature assumption here is simply a scaling normalization. In other words, the topology of a locally symmetric compact manifold determines its metric structure, up to scaling. 

It is important to note, of course, that there is no analogous theorem for surfaces. Indeed, a compact surface of genus $g \geq 2$ has a $(6g-6)$-dimensional moduli space of hyperbolic metrics (i.e., locally symmetric metrics of constant curvature $-1$). Such surfaces therefore have many metric deformations that would be ruled out by a rigidity theorem.

\subsection{Extending Mostow rigidity}

Let us now turn our attention to a different question: How can one determine when a negatively curved manifold is locally symmetric? A significant amount of work in this direction, much of it from the early and mid-1990s, sought to find a symmetric structure in manifolds that were extremal for certain metric quantities---volume, curvature bounds, geodesic lengths, entropy, etc. Most relevant for us here is the entropy rigidity theorem of U. Hamenst\"adt \cite{Ham90}.

Before stating this result, we must first define entropy. Let $(M^n,g)$ be a compact Riemannian manifold and let $(\tilde{M},\tilde{g})$ be its universal Riemannian cover with metric $\tilde{g}$. Let $B(p,R)$ denote the ball of radius $R$ centered at $p \in \tilde{M}$, and let $\Vol_{\tilde{g}}B(p,R)$ be the volume of this ball. We call
$$h_{\vol}(g) = \lim_{R\rightarrow \infty} \frac{\log \lp \Vol_{\tilde{g}} B(p,R) \rp}{R}$$
the \textit{volume entropy} of $g$; this limit is independent of the choice $p \in \tilde{M}$. For example, if $(M^n,g)$ is hyperbolic, then $(\tilde{M},\tilde{g})$ can be identified with real hyperbolic space $\Hy^n$. Consequently, 
$$\Vol_{\tilde{g}} B(p,R) = \Vol_{\Hy^n} B(p,R) \approx e^{(n-1)R},$$
so that $h(g) = n-1$.

The following relationship indicates why this volume-growth quantity is considered to be a type of entropy. Let $h_{\topo}(g)$ denote the \textit{topological entropy} of the geodesic flow on the unit tangent bundle of $(M^n,g)$ (see \cite[Section 3]{Mann79} for definitions). For general compact manifolds, Manning \cite{Mann79} showed that $$h_{\topo}(g) \geq h_{\vol}(g),$$ and if $(M^n,g)$ has non-positive sectional curvature, then equality holds. As we will concern ourselves only with compact manifolds of negative sectional curvature, we can set
$$h(g) = h_{\topo}(g) = h_{\vol}(g)$$
from now on and refer to it simply as the \textit{entropy of $g$}.

\begin{theorem}[Hamenst\"adt \cite{Ham90}] \label{Ham}
Let $(M^n,g_0)$ be a locally symmetric compact manifold with maximal sectional curvature $-1$ and $n\geq 3$. Let $g$ be another Riemannian metric on $M$, also with maximal sectional curvature $-1$. Then $h(g) \geq h(g_0)$, and equality holds if and only if $g$ is locally symmetric. In particular, equality holds if and only if $(M^n,g)$ is isometric to $(M^n,g_0)$.
\end{theorem}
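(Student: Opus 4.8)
The plan is to move the entire problem to the ideal boundary of the universal cover. Lift to universal covers: $(\tilde M,\tilde g)$ and $(\tilde M,\tilde g_0)=\Hy^k_F$ are Hadamard manifolds, each with sectional curvature pinched between $-a^2$ and $-1$ (the lower bound coming from compactness of $M$), and $\Gamma=\pi_1(M)$ acts cocompactly by isometries on each. For cocompact actions the volume entropy $h(g)$ coincides with the critical exponent of $\Gamma$ on $(\tilde M,\tilde g)$ and, through Patterson--Sullivan theory (the Sullivan shadow lemma), with the Hausdorff dimension of $\bdry\tilde M$ equipped with a visual Gromov-product metric $d_g$; the Patterson--Sullivan measure is Ahlfors regular of exponent $h(g)$. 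The same applies to $g_0$, where $(\bdry\tilde M,d_{g_0})$ is bi-Lipschitz to $\bdry\Hy^k_F$ with its Carnot--Carath\'eodory metric, of Hausdorff dimension $h(g_0)$ equal to the homogeneous dimension of the boundary nilpotent group --- so $h(g_0)=n-1$ precisely when $F=\R$, and $h(g_0)>n-1$ in the other cases. The structural point that makes a comparison possible is that $\bdry(\tilde M,\tilde g)$ and $\bdry(\tilde M,\tilde g_0)$ are the \emph{same} space, the Gromov boundary $\bdry\Gamma$, carrying the \emph{same} $\Gamma$-action: the theorem becomes a statement comparing the two $\Gamma$-invariant conformal gauges $d_g$ and $d_{g_0}$ on $\bdry\Gamma$.

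When $F=\R$ the inequality is elementary. Here $\tilde g_0$ has constant curvature $-1$; since $\tilde g$ has sectional curvature $\leq -1$, G\"unther's volume comparison gives $\Vol_{\tilde g}B(p,R)\ge\Vol_{\Hy^n}B(p,R)\asymp e^{(n-1)R}$ for every $R$, hence $h(g)=h_{\vol}(g)\ge n-1=h(g_0)$. If equality holds, the comparison is sharp for all $R$, and propagating this equality case over $\tilde M$ (using cocompactness of $\Gamma$) forces $\tilde g$ to have constant curvature $-1$, so $(\tilde M,\tilde g)\cong\Hy^n$ and $g$ is locally symmetric. Mostow rigidity --- valid because $n\ge3$ --- then gives an isometry $(M^n,g)\cong(M^n,g_0)$.

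The substantial case is $F=\C$, the quaternions, or the octonions, where $h(g_0)>n-1$ and naive volume comparison is far too weak: one must use the exact geometry of $\Hy^k_F$, not merely its curvature bounds. The plan is to compare $d_g$ and $d_{g_0}$ through the four-point cross-ratio on $\bdry\Gamma$. Convexity of Busemann functions together with Rauch comparison against the curvature bound $-1$ yield an inequality between the cross-ratio induced by $\tilde g$ and the explicitly computable cross-ratio of $\Hy^k_F$; feeding this into Patterson--Sullivan shadow estimates --- and using the geodesic flow together with its Bowen--Margulis (maximal-entropy) measure on the space of geodesics $(\bdry\Gamma\times\bdry\Gamma)\setminus\Delta$ to relate these shadows to the entropies --- produces $h(g)\ge h(g_0)$. (A closely related device is the Besson--Courtois--Gallot natural map $\Phi\colon(\tilde M,\tilde g)\to\Hy^k_F$, a barycenter of the pushed-forward Patterson--Sullivan measure, whose Jacobian obeys a bound forced by the symmetric structure of the target.) When equality $h(g)=h(g_0)$ holds, it forces the cross-ratio comparison to be an equality everywhere, hence the boundary identification $\bdry\Gamma\to\bdry\Gamma$ to be M\"obius.

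For the equality statement one then invokes a boundary-rigidity principle: a Hadamard manifold of sectional curvature $\leq -1$ whose boundary cross-ratio agrees with that of $\Hy^k_F$ is itself isometric to $\Hy^k_F$. Hence equality forces $g$ to be locally symmetric, and Mostow rigidity ($n\ge3$) upgrades this to an isometry with $(M^n,g_0)$. The main obstacle, throughout the non-real case, is the sharp comparison: establishing the cross-ratio (equivalently, Jacobian) inequality \emph{with the optimal constant}, and above all analyzing its rigidity case. This is precisely where the special algebraic structure of the rank-one symmetric spaces must be used, and it is also why the concluding ``isometric'' clause needs $n\ge3$: a closed surface of genus $\geq 2$ carries a positive-dimensional family of hyperbolic metrics, all of entropy $1$ but pairwise non-isometric, so equality of entropy cannot force an isometry when $n=2$.
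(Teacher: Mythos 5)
The paper itself offers no proof of Theorem \ref{Ham}: it is quoted from Hamenst\"adt \cite{Ham90} purely as motivation, so there is no internal argument to compare yours against, and your proposal must stand on its own. It does not: the most concrete gap is in your ``elementary'' real-hyperbolic case. From $h(g)=h(g_0)=n-1$ you conclude that ``the comparison is sharp for all $R$,'' but entropy is only the exponential growth rate $\lim_{R\to\infty}\log\lp\Vol_{\tilde g}B(p,R)\rp/R$; equality of these limits does not force $\Vol_{\tilde g}B(p,R)=\Vol_{\Hy^n}B(p,R)$ at any single radius (volume growth like $Ce^{(n-1)R}$ with $C$ large, or $Re^{(n-1)R}$, has entropy exactly $n-1$ while strictly exceeding the hyperbolic volume), so the G\"unther equality-case analysis never gets started. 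The rigidity half of the statement, even for $F=\R$, is precisely the hard content of \cite{Ham90} (and later of Besson--Courtois--Gallot), not a propagation of a pointwise volume comparison from an asymptotic equality.

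In the non-real cases you correctly name the relevant circle of ideas (critical exponents, Patterson--Sullivan shadows, cross-ratios, the Bowen--Margulis measure, the barycenter map), but the two decisive steps --- the sharp comparison between the gauges $d_g$ and $d_{g_0}$ (equivalently, the Jacobian bound with the optimal constant) and the analysis of its equality case --- are exactly what you set aside as ``the main obstacle.'' Those steps \emph{are} the theorem; ``Rauch comparison yields an inequality between the cross-ratios'' is asserted without a precise statement, a direction, or a constant, and the concluding ``boundary-rigidity principle'' (a cross-ratio--preserving boundary identification forces an isometry with $\Hy^k_F$) is itself a nontrivial theorem, essentially Bourdon's, which you would need to prove or cite carefully. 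As a survey of how such a proof is organized your sketch is sensible, and your closing remark about genus-$\geq 2$ surfaces correctly explains why the final ``isometric'' clause needs $n\geq 3$; but as a proof the argument is missing its core in every case, including $F=\R$.
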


In other words, the locally symmetric structures on $M$ are precisely the minima of the entropy functional, at least among metrics suitably normalized by curvature. Note also that the ``in particular" statement in this theorem follows from Mostow rigidity.

From Hamenst\"adt's theorem, there are various directions in which one may proceed (see, for example, the survey \cite{Spa04} on rigidity theory). Remaining in the Riemannian setting, we can ask if there are other pairs of normalizations and metric quantities for which rigidity theorems can be obtained. For example, suppose that $(M,g_0)$ is a locally symmetric compact manifold of dimension $\geq 3$ with unit volume and let $g$ be another metric on $M$ with unit volume. Then $h(g) \geq h(g_0)$ and equality holds precisely when $(M,g)$ and $(M,g_0)$ are isometric. A more general version of this was established by G. Besson, G. Courtois, and S. Gallot \cite{BCG96}, along with several consequential rigidity statements, but the general theme is that the locally symmetric metrics on manifolds related to $(M,g_0)$ can be identified by two quantities: volume and entropy. Incidentally, the methods used in their paper give a constructive proof of Mostow's original result by exhibiting the desired isometry.

\subsection{Toward a metric setting}

A different direction one may take (and the direction we wish to push further in this paper) is to extend Hamenst\"adt's theorem to metric geometry. To motivate the comparison between the Riemannian and metric settings, let $(M^n,g)$ be as in Theorem \ref{Ham}, and let $\Gamma = \pi_1(M)$ be its fundamental group. Also, let $(X,d)$ be its Riemannian universal cover with metric $d$. Of course $d$ is a Riemannian metric itself, but as we move away from the Riemannian setting, we want to think of $d$ simply as a distance function.

The negative curvature in $(M^n,g)$, which guarantees negative curvature in $(X,d)$ as well, allows one to define an ideal boundary: the collection of asymptotic classes of geodesic rays emanating from a fixed base-point. Moreover, this boundary has a canonical metric structure that is closely related to the asymptotic geometry of $X$. For example, if $g$ is hyperbolic, then its universal cover is, once again, the real hyperbolic space $\Hy^n$, whose ideal boundary is the Euclidean sphere $\Sp^{n-1}$. 

The isometric action of $\Gamma$ on $(X,d)$ passes naturally to an action on the ideal boundary. Here the entropy of $g$ plays an important role, as $h(g)$ is the Hausdorff dimension of the canonical metric on the boundary. Equality of $h(g)$ and $h(g_0)$ therefore guarantees that the boundary associated to $g$ has metric properties similar to those of the boundary associated to $g_0$, which is much better understood.

Let us make the comparison between Riemannian and metric geometry more explicit. The universal cover $(X,d)$ has sectional curvature at most $-1$, so it satisfies the $\CAT(-1)$ condition \cite[Th\'eor\`eme 9]{Tro90}. Recall that a geodesic metric space is called $\CAT(-1)$ if its geodesic triangles are thinner than their comparison triangles in the real hyperbolic plane. Moreover, the action of the fundamental group $\Gamma$ on $X$ is isometric, properly discontinuous, and cocompact. We will refer to such actions as \textit{geometric} actions from now on. Actually, to deal with more general situations, it will be convenient to weaken the cocompactness property to \textit{quasi-convex cocompactness}: there is a quasi-convex subset $Y \subset X$ on which $\Gamma$ acts cocompactly. We call such actions \ti{quasi-convex geometric}; see Section \ref{rcg} for formal definitions.

For a $\CAT(-1)$-space $X$, one can define a boundary at infinity, which we denote by $\bdry X$. As with the ideal boundary, it will be a topological space with a canonical metric structure (again, see Section \ref{rcg} for details). Let $\Lambda(\Gamma)$ be the limit set of $\Gamma$ in $\bdry X$. If the action is cocompact, then $\Lambda(\Gamma) = \bdry X$, but in general the limit set can be much smaller than the whole boundary. However, its Hausdorff dimension has a familiar form \cite[Th\'eor\`eme 2.7.4]{Bour96}:
$$\dimup_H \Lambda(\Gamma) = \limsup_{R \rightarrow \infty} \frac{\log(N(R))}{R},$$
where $p \in X$ is any point and $N(R) = \# \{ \Gamma p \cap B_X(p,R) \}$ is the number of points in the orbit $\Gamma p$ that lie at distance at most $R$ from $p$. This Hausdorff dimension is therefore a metric analog of the entropy we considered earlier.

In this context, M. Bourdon \cite{Bour96IHES} proved the following generalization of Theorem \ref{Ham}.

\begin{theorem}[Bourdon \cite{Bour96IHES}]
Let $\Gamma = \pi_1(M^n,g_0)$ be the fundamental group of a locally symmetric compact manifold of maximal sectional curvature $-1$ and dimension $n \geq 3$. Suppose that $\Gamma$ acts quasi-convex geometrically on a $\CAT(-1)$-space $X$. Let $S$ be the universal Riemannian cover of $(M^n,g_0)$. Then
$$\dimup_H \Lambda(\Gamma) \geq \dimup_H \bdry S,$$
and equality holds if and only if there is an isometric embedding $F \colon S \rightarrow X$, equivariant with respect to the natural action of $\Gamma$ on $S$, whose extension to the boundary has $F(\bdry S) = \Lambda(\Gamma)$.
\end{theorem}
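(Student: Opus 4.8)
The plan is to transport the entire statement to the boundary at infinity and prove a rigidity statement there. Because the action on $X$ is quasi-convex geometric, $\Gamma$ is word-hyperbolic and admits a quasi-convex invariant set $Y\subset X$ on which it acts cocompactly; by the \v{S}varc--Milnor lemma $Y$ is quasi-isometric to $\Gamma$ (with a word metric) and $\bdry Y=\Lambda(\Gamma)$. The geometric action on the symmetric space $S$ makes $S$ quasi-isometric to $\Gamma$ as well. Composing these quasi-isometries and passing to boundaries yields a $\Gamma$-equivariant quasi-M\"obius homeomorphism $\varphi\colon\bdry S\to\Lambda(\Gamma)$, where $\bdry S$ carries its canonical Carnot--Carath\'eodory-type metric and $\Lambda(\Gamma)$ a visual metric induced from $X$.

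First I would prove the inequality. The boundary $\bdry S$ of a rank-one symmetric space is Ahlfors $Q$-regular with $Q=\dimup_H\bdry S$ equal to the volume entropy of $S$, and it is an Ahlfors-regular Loewner space; by Pansu's computation it attains its conformal dimension, i.e.\ every metric space quasi-M\"obius equivalent to $\bdry S$ has Hausdorff dimension at least $Q$. Applied to $\varphi$ this gives $\dimup_H\Lambda(\Gamma)\geq Q=\dimup_H\bdry S$. (The identification of $\dimup_H\Lambda(\Gamma)$ with the critical exponent $\limsup_{R\to\infty}\log N(R)/R$ recalled in the introduction is not needed for the inequality, but it organizes the equality discussion.)

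The equality case is the heart of the theorem, and I would proceed in three stages. (i) Show that $\Lambda(\Gamma)$ is itself Ahlfors $Q$-regular: the Patterson--Sullivan construction for the $\Gamma$-action on $X$, together with cocompactness of the action on $Y$ (Coornaert's argument), produces a $Q$-conformal density on $\Lambda(\Gamma)$ whose measure is Ahlfors $Q$-regular, so the critical exponent $Q$ is realized by a regular measure. (ii) Upgrade $\varphi$ from quasi-M\"obius to M\"obius. Now $\varphi$ is a quasi-M\"obius homeomorphism between two Ahlfors $Q$-regular spaces, the source being $Q$-Loewner and the target having Hausdorff dimension equal to its conformal dimension $Q$; a conformal/combinatorial modulus comparison then forces $\varphi$ to have bounded metric distortion, and equivariance under $\Gamma$---whose dynamics on $\Lambda(\Gamma)$ is expanding and quasi-M\"obius, so that one is in the framework of \cite{BK02}---together with an ergodicity argument for the Patterson--Sullivan measure promotes $\varphi$ to a M\"obius map, i.e.\ one preserving the boundary cross-ratio. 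When $S=\Hy^n$ one has $\bdry S=\Sp^{n-1}$ with topological dimension $n-1$ equal to $\dimup_H\Lambda(\Gamma)$, so this stage is exactly the rigidity theorem of \cite{BK02}; for the remaining rank-one symmetric spaces, where the metric and topological dimensions of $\bdry S$ differ, one instead leans on the Loewner property of the Carnot--Carath\'eodory boundary. (iii) Convert the boundary M\"obius map into an interior embedding: in a $\CAT(-1)$-space the distance function is recovered from the boundary cross-ratio via the Gromov product, so a M\"obius embedding $\varphi\colon\bdry S\hookrightarrow\bdry X$ extends---via a barycenter construction, as in Bourdon's work on boundary cross-ratios---to a $\Gamma$-equivariant isometric embedding $F\colon S\to X$, whose boundary extension sends $\bdry S$ onto $\Lambda(\Gamma)$ by construction.

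The converse is immediate: given such an $F$, its boundary extension is a M\"obius, hence locally bi-Lipschitz, homeomorphism of $\bdry S$ onto $\Lambda(\Gamma)$, so the two Hausdorff dimensions agree. I expect the main obstacle to be stage (ii): pinning down the distortion of $\varphi$ precisely at the conformal dimension and then rigidifying it using only the group action. This is also where the hypothesis $n\geq 3$ is essential---it forces $\dimup_H\bdry S\geq 2$, so that the Loewner and conformal-dimension rigidity have content (for $\bdry S=\Sp^1$ there is a genuine moduli space of quasi-M\"obius deformations, reflecting the failure of rigidity for surfaces)---and it is precisely the portion of the argument that \cite{BK02} isolates and solves in the equidimensional case.
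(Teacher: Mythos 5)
First, a caveat about the comparison itself: the paper does not prove this statement---it is quoted as background, with the proof residing in Bourdon's paper \cite{Bour96IHES}---so your sketch can only be measured against Bourdon's argument and against the way the present paper uses the result. Your overall architecture (equivariant quasi-M\"obius boundary map $\varphi \colon \bdry S \to \Lambda(\Gamma)$; the inequality via Pansu's computation that $\bdry S$ attains its conformal dimension; at equality, show $\varphi$ is M\"obius; then extend a cross-ratio--preserving boundary map to an isometric embedding $F\colon S \to X$, which is Bourdon's own extension theorem in the same paper) is indeed the right shape, and the inequality and stage (iii) are essentially as in Bourdon.

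The genuine gap is in stage (ii), which you yourself identify as the heart. The assertion that ``a conformal/combinatorial modulus comparison forces $\varphi$ to have bounded metric distortion'' is false as an implication: a quasi-M\"obius homeomorphism between two Ahlfors $Q$-regular spaces whose source is $Q$-Loewner need not be bi-Lipschitz (quasiconformal self-maps of the round sphere already defeat this), and bounded distortion would in any case not yield cross-ratio preservation. Equally problematic is outsourcing the real-hyperbolic case to \cite{BK02}: that theorem produces \emph{some} quasisymmetric conjugacy of the action on $\Lambda(\Gamma)$ to a M\"obius action on $\Sp^{n-1}$, not that the given equivariant map $\varphi$ preserves cross-ratios with respect to the visual metric on $\Lambda(\Gamma)$, and within the logical structure of the present paper this would be circular, since \cite{BK02} invokes Bourdon's theorem (or the later argument of \cite{BK04}) precisely to pass from the boundary statement to the interior one. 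What actually carries the equality case in Bourdon's proof is the Patterson--Sullivan/ergodicity mechanism you mention only in passing: at $\dimup_H \Lambda(\Gamma) = \dimup_H \bdry S$ one compares the $Q$-conformal density on $\Lambda(\Gamma)$ (Coornaert-type shadow estimates) with the canonical conformal density on $\bdry S$, shows $\varphi$ is measurably conformal, and uses ergodicity of $\Gamma$ on the double boundary to make the conformal distortion constant, hence $\varphi$ M\"obius. Until that step is actually carried out, the proposal is an outline of Bourdon's strategy rather than a proof; the remaining stages ((i), (iii), and the converse) are fine modulo standard references.
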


Although this theorem certainly points in the direction of metric geometry, it does not strictly fall in this category. Indeed, the restriction of $\Gamma$ to fundamental groups of locally symmetric spaces and the use of $\dimup_H \bdry S$ in the rigidity inequality seem to place this result, in some sense, between Riemannian geometry and metric geometry.

In \cite{BK02}, M. Bonk and B. Kleiner extended the real-hyperbolic version of Bourdon's theorem to the metric setting. By real-hyperbolic, we mean the case that $(M^n,g_0)$ has constant sectional curvature $-1$, so that $S = \Hy^n$. Recall that $\bdry \Hy^n = \Sp^{n-1}$, which has Hausdorff dimension $n-1$.

\begin{theorem}[Bonk--Kleiner \cite{BK02}, \cite{BK04}] \label{BK}
Suppose that a group $\Gamma$ acts quasi-convex geometrically on a $\CAT(-1)$ metric space $X$. Let $n \geq 1$ be the topological dimension of $\Lambda(\Gamma)$. Then
$$\dimup_H \Lambda(\Gamma) \geq n,$$
and equality holds if and only if $\Gamma$ acts geometrically on an isometric copy of $\Hy^{n+1}$ in $X$.
\end{theorem}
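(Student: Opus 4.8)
The inequality $\dimup_H\Lambda(\Gamma)\ge n$ needs nothing about the group: for every separable metric space the Hausdorff dimension dominates the topological (covering) dimension, and this applies to $\Lambda(\Gamma)$ with any metric in its canonical conformal gauge. The ``if'' half of the equivalence is also soft: if $\Gamma$ acts geometrically on an isometric---hence convex---copy $Y\cong\Hy^{n+1}$ inside $X$, then $\Lambda(\Gamma)=\bdry Y$, which in the Bourdon visual metric $d_o(\xi,\eta)=e^{-(\xi\mid\eta)_o}$ is bi-Lipschitz to the round sphere $\Sp^n$, so $\dimup_H\Lambda(\Gamma)=n=\dimup_\topo\Lambda(\Gamma)$. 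Thus the whole content is the converse, and I would assume henceforth that $\dimup_H\Lambda(\Gamma)=n$ and try to manufacture the copy of $\Hy^{n+1}$.

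The plan is to reduce the converse to the Bonk--Kleiner rigidity theorem for quasi-M\"obius actions. Write $Z=\Lambda(\Gamma)$ with a Bourdon visual metric based at some $o\in X$. First I would check that theorem's hypotheses. Standard $\CAT(-1)$ boundary theory shows $\Gamma\acts Z$ is uniformly quasi-M\"obius, and quasi-convex cocompactness of $\Gamma\acts X$---via the quasi-convex hull of $\Lambda(\Gamma)$---gives that $\Gamma$ acts cocompactly on the space of distinct triples of $Z$, so the action is ``expanding'' in the relevant sense. Next, Coornaert's theorem on Patterson--Sullivan measures for quasi-convex geometric actions provides a $\Gamma$-quasi-invariant measure on $Z$ of dimension the critical exponent $\delta_\Gamma$ which is Ahlfors $\delta_\Gamma$-regular, and Bourdon's formula (quoted in the introduction) identifies $\delta_\Gamma=\dimup_H Z=n$. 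Hence $Z$ is Ahlfors $n$-regular with $\dimup_\topo Z=n$, and the Bonk--Kleiner theorem of \cite{BK02} applies: there is a quasi-M\"obius homeomorphism $\varphi\colon Z\to\Sp^n$ conjugating the $\Gamma$-action to an action by M\"obius transformations of $\Sp^n$, i.e.\ a homomorphism $\rho\colon\Gamma\to\mathrm{Isom}(\Hy^{n+1})$ (using the identification of M\"obius maps of $\Sp^n$ with boundary values of isometries of $\Hy^{n+1}$) satisfying $\varphi\circ\gamma=\rho(\gamma)\circ\varphi$ on $Z$.

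It then remains to promote this abstract M\"obius model into a $\Gamma$-invariant isometric copy of $\Hy^{n+1}$ inside $X$. I would first note that $\rho(\Gamma)$, and hence $\Gamma$, acts geometrically on $\Hy^{n+1}$: proper discontinuity of $\Gamma\acts X$ transfers to properness of the action on distinct triples of $\bdry X$, hence (restrict to $Z$, transport by $\varphi$) to properness on triples of $\Sp^n$, which is equivalent to proper discontinuity on $\Hy^{n+1}$; cocompactness on triples of $\Sp^n$ is equivalent to cocompactness on $\Hy^{n+1}$, since the convex hull of $\Sp^n$ is all of $\Hy^{n+1}$; and $\ker\rho$ fixes $Z$ pointwise, hence fixes every geodesic with endpoints in $\Lambda(\Gamma)$ pointwise, hence is finite. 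For the embedding I would take $Y'\subset X$ to be the union of all $X$-geodesics with both endpoints in $\Lambda(\Gamma)$: it is $\Gamma$-invariant and quasi-convex, $\Gamma$ acts on it properly (restriction) and cocompactly (triple-cocompactness), and Milnor--\v{S}varc gives a $\Gamma$-equivariant quasi-isometry $\Hy^{n+1}\to Y'$ with boundary map $\varphi^{-1}$. The last and hardest step is to upgrade this quasi-isometry to an \emph{isometry}, and this is exactly where the equality $\dimup_H\Lambda(\Gamma)=n$ must enter decisively rather than merely through Ahlfors regularity: one wants to argue that, because the Hausdorff dimension of $Z$ attains the least value compatible with its topology, the quasisymmetry $\varphi$ to $\Sp^n$ is forced to be conformal for the canonical cross-ratio on $Z$---a conformal-dimension rigidity statement---after which a M\"obius identification of $\bdry Y'$ with the round $\Sp^n$ forces the $\CAT(-1)$ comparison inequalities to become equalities along $Y'$, so that $Y'$ is isometric to $\Hy^{n+1}$. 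I expect this promotion, from a quasisymmetric/quasi-isometric model to an honest conformal/isometric one, to be the main obstacle: the quasi-M\"obius rigidity theorem supplies only the topological--combinatorial skeleton, and extracting genuine metric rigidity from the dimension equality is the crux.
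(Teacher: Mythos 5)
Your handling of the inequality, of the ``if'' direction, and of the reduction of the equality case to Theorem \ref{BKqm} is correct and follows the same route the paper sketches: with a Bourdon visual metric, the induced action $\Gamma \acts \Lambda(\Gamma)$ is uniformly (in fact strongly) quasi-M\"obius and cocompact on triples, Coornaert's theorem together with the equality $\dimup_H \Lambda(\Gamma)=n$ makes $\Lambda(\Gamma)$ Ahlfors $n$-regular of topological dimension $n$, and Theorem \ref{BKqm} then conjugates the action quasisymmetrically to a M\"obius action on $\Sp^n$, which extends to a geometric action on $\Hy^{n+1}$.

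The genuine gap is the step you yourself flag as ``the main obstacle'': producing an \emph{isometric}, $\Gamma$-equivariant copy of $\Hy^{n+1}$ inside $X$. That step is the actual content of the equality case, and the paper closes it by invoking Bourdon's rigidity theorem from \cite{Bour96IHES} (quoted in Section \ref{intro}): once $\Gamma$ acts geometrically on $\Hy^{n+1}$ and one knows $\dimup_H \Lambda(\Gamma) = n = \dimup_H \bdry \Hy^{n+1}$, the equality case of that theorem yields an equivariant isometric embedding $F \colon \Hy^{n+1} \rightarrow X$ with $F(\bdry \Hy^{n+1}) = \Lambda(\Gamma)$ (and for $n=1$, where Bourdon's statement does not apply, one uses the alternative argument of \cite{BK04}, which also avoids Bourdon for $n \geq 2$). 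Your proposed substitute---that minimality of the Hausdorff dimension should force the quasisymmetric conjugacy to preserve cross-ratios exactly, after which the $\CAT(-1)$ comparison inequalities should degenerate to equalities along $Y'$---is the right intuition (it is essentially how Bourdon's theorem is proved), but as written it is a heuristic, not a proof: no mechanism is given that upgrades the quasi-M\"obius conjugacy to a genuinely M\"obius one, and the claim that the union $Y'$ of geodesics with endpoints in $\Lambda(\Gamma)$ is itself isometric to $\Hy^{n+1}$ is asserted rather than derived. The Milnor--\v{S}varc quasi-isometry contributes nothing toward isometric rigidity. So the proposal is incomplete precisely at the decisive step; what is needed there is either the citation of Bourdon's theorem (as the paper does for $n \geq 2$) or a reproduction of the argument of \cite{BK04}.
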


The assertion $\dimup_H \Lambda(\Gamma) \geq n$ here is nothing special, as the Hausdorff dimension of any metric space is bounded from below by its topological dimension \cite[Chapter 7]{HW41}. Let us focus on the case of equality, then, and briefly describe the method of proof.

As in the rigidity theorems discussed above, the argument relies on a quasiconformal analysis of the limit set $\Lambda(\Gamma)$. The isometric action of $\Gamma$ on $X$ naturally passes to an action on $\Lambda(\Gamma)$ by uniformly quasi-M\"obius maps. As $\Gamma$ acts cocompactly on a quasi-convex subset of $X$, the induced action on $\Lambda(\Gamma)$ will be \textit{cocompact on triples}: any three distinct points in the limit set can be uniformly separated by applying an element of the group. This property should be viewed as a type of expanding dynamics on $\Lambda(\Gamma)$. It also allows us to conclude that $\Lambda(\Gamma)$ is \textit{Ahlfors regular} of dimension $n$: the $n$-dimensional Hausdorff measure of any metric ball $B(x,r)$ in the limit set is $\approx r^n$ (for $0\leq r \leq \diam \Lambda(\Gamma)$). 

The following theorem is the main result in \cite{BK02}.

\begin{theorem}[Bonk--Kleiner \cite{BK02}] \label{BKqm}
Let $Z$ be a compact, Ahlfors $n$-regular metric space with topological dimension $n \geq 1$. Suppose that $\Gamma \acts Z$ is a uniformly quasi-M\"obius group action that is cocompact on triples. Then $\Gamma \acts Z$ is quasisymmetrically conjugate to an action of $\Gamma$ on $\Sp^n$ by M\"obius transformations.
\end{theorem}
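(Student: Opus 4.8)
The plan is to show that $Z$ is quasisymmetrically equivalent to the standard sphere $\Sp^n$, and then to upgrade this equivalence to a conjugacy by exploiting the large group of symmetries. The first and main task is the quasisymmetric uniformization. Since $Z$ is compact, Ahlfors $n$-regular, and has topological dimension $n$, the idea is to verify the hypotheses of a Bonk--Kleiner-type uniformization theorem for $\Sp^n$: one wants to produce enough ``combinatorial surfaces'' or, more precisely, to check that $Z$ is \emph{linearly locally connected} (LLC) and carries enough topological $n$-spheres to apply the characterization of $\Sp^n$ among Ahlfors $n$-regular spaces. The dynamical input — that $\Gamma \acts Z$ is uniformly quasi-M\"obius and cocompact on triples — is what forces this regularity of the space: a space with a cocompact-on-triples action is \emph{approximately self-similar}, so local and global structure coincide up to controlled distortion, and in particular $Z$ cannot have ``bad'' points. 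First I would establish this approximate self-similarity and use it to prove the LLC condition and to rule out local cut points, using that $\dim_{\topo} Z = n = \dim_H Z$ (so the Hausdorff and topological dimensions agree, which is the borderline case where rigidity can hold).

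Granting the uniformization, let $f\colon Z \to \Sp^n$ be a quasisymmetric homeomorphism. Then the conjugated action $f \Gamma f^{-1}$ is a uniformly quasi-M\"obius (equivalently, uniformly quasiconformal) group action on $\Sp^n$ that is cocompact on triples. The second step is to straighten this to a M\"obius action. Here I would invoke the Sullivan--Tukia theory of uniformly quasiconformal groups on $\Sp^n$: a uniformly quasiconformal group acting on $\Sp^n$ (with $n \geq 2$; the cases $n = 1$ and the subtleties around $n=2$ handled separately) that acts cocompactly on triples — equivalently, the action extends to a cocompact action on $\Hy^{n+1}$ — is quasiconformally conjugate to a M\"obius group, via Tukia's theorem that such a group admits an invariant conformal structure, together with the measurable Riemann mapping theorem to integrate it. Composing this second conjugacy with $f$ yields a quasisymmetric conjugacy of the original action $\Gamma \acts Z$ to a M\"obius action on $\Sp^n$.

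The main obstacle is the quasisymmetric uniformization step: proving that an abstract Ahlfors $n$-regular metric space with topological dimension $n$ and a cocompact-on-triples quasi-M\"obius action must be quasisymmetric to $\Sp^n$. For $n = 1$ this is essentially Bonk--Kleiner's characterization of quasicircles together with the observation that $\Lambda$ cannot be a Cantor set or an arc (it has no local cut points, by cocompactness on triples, and is connected). For $n \geq 2$ the heart of the matter is to use approximate self-similarity to run a Whitney-type decomposition and build bi-Lipschitz (or quasisymmetric) charts, checking the LLC conditions quantitatively; the dimension hypothesis $\dim_H = \dim_{\topo}$ is exactly what prevents the space from being, say, a Sierpi\'nski-carpet-like object of the ``wrong'' conformal dimension. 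I expect the bulk of the technical work, and the place where the coincidence of metric and topological dimension is genuinely used, to be in this first step; the passage from quasisymmetric equivalence to M\"obius conjugacy via Tukia's rigidity is comparatively standard.
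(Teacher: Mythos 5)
Your overall two-step architecture (first uniformize $Z$ quasisymmetrically by $\Sp^n$, then straighten the resulting uniformly quasi-M\"obius action on $\Sp^n$ to a M\"obius action via Tukia's Theorem G, using cocompactness on triples) does match the skeleton of the Bonk--Kleiner argument, and your second step is essentially correct as stated for $n\geq 2$ (cf.\ Theorem \ref{Tukia}; note, though, that the ``invariant conformal structure plus measurable Riemann mapping theorem'' mechanism is special to $n=2$ --- in higher dimensions Tukia's proof proceeds by a blow-up/limiting argument, which is exactly what makes the cocompactness-on-triples hypothesis indispensable, since uniformly quasiconformal groups on $\Sp^n$, $n\geq 3$, are in general \emph{not} conjugate to M\"obius groups).

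The genuine gap is in your first step, which is where all the work lies. You propose to verify linear local connectivity and then invoke ``the characterization of $\Sp^n$ among Ahlfors $n$-regular spaces.'' No such characterization exists: for $n\geq 3$, Semmes constructed Ahlfors $n$-regular, linearly locally contractible metric spheres that admit no quasisymmetric parametrization by $\Sp^n$, so LLC plus regularity (even plus approximate self-similarity data of the kind you describe) cannot by itself yield the uniformization; and the $n=2$ uniformization theorem of Bonk--Kleiner requires knowing in advance that the space is a topological $2$-sphere, which is not among the hypotheses here (only $\dim_{\topo} Z = n$ is assumed, so you would separately have to show $Z$ is a manifold, which your sketch does not address). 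The actual proof in \cite{BK02} avoids any parametrization-by-LLC theorem: it uses a result from the David--Semmes quantitative topology/rectifiability circle of ideas to produce a weak tangent of $Z$ containing a large bi-Lipschitz piece of $\R^n$, upgrades this (using the topological dimension hypothesis and further work) to a weak tangent bi-Lipschitz equivalent to that of $\Sp^n$, and then uses the quasi-M\"obius action cocompact on triples to show that $Z$ is quasi-M\"obius equivalent to the compactification of any of its weak tangents (this is the self-similarity step, Lemma 5.3 of \cite{BK02}; see the outline given in the proof of Theorem \ref{BK2} above). Since the compactified weak tangent of $\Sp^n$ is $\Sp^n$, this yields the quasisymmetric equivalence $Z\simeq\Sp^n$, after which Tukia's theorem (for $n\geq 2$) or Hinkkanen-type results for the circle ($n=1$) complete the conjugation. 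So the missing idea in your proposal is precisely this weak-tangent mechanism; as written, your uniformization step would fail for every $n\geq 3$ and is incomplete for $n=1,2$.
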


As M\"obius transformations can be extended naturally to isometries of $\Hy^{n+1}$, we obtain a geometric action of $\Gamma$ on $\Hy^{n+1}$. If $n \geq 2$, this puts us in the setting of Bourdon's theorem, which we apply to conclude that $\Gamma$ acts cocompactly on an isometric copy of $\Hy^{n+1}$ in $X$.

Actually, it turns out that appealing to Bourdon's theorem is not necessary. An alternative argument is given in \cite{BK04}, and it works just as well in the case that $n=1$.

\subsection{Rigidity on fractal spaces}

Following Bonk and Kleiner, this paper is primarily concerned with rigidity of expanding quasi-M\"obius group actions. Indeed, results in this setting often lead to rigidity theorems that are more geometric. Reconsidering, then, Theorem \ref{BKqm}, it is natural to wonder what one can say if the Hausdorff and topological dimensions differ.

A large collection of such examples are boundaries of Gromov hyperbolic groups equipped with a visual metric. In many important cases, the boundary is topologically a sphere, and always, it will be Ahlfors regular. Generally, though, the metric dimension is strictly larger than its topological dimension. In the case where the boundary is homeomorphic to $\Sp^2$, it is conjectured that there \textit{exists} an Ahlfors regular metric of dimension 2, but this is a difficult problem (see \cite[Section 5]{Bon06} for this formulation of Cannon's conjecture).

It is therefore of interest to obtain rigidity results for quasi-M\"obius group actions on fractal metric spaces---spaces in which the metric dimension differs from the topological dimension. This is the general objective in the present paper. In moving from such a broad goal to concrete theorems, we have kept an eye on applications to coarse hyperbolic geometry, which is a relevant setting for the study of Gromov hyperbolic groups. As a consequence, our main theorem will lead, via the work in \cite{BK02}, to an entropy rigidity result for geometric group actions on Gromov hyperbolic metric spaces with an asymptotic upper curvature bound. Naturally, this can be seen as a ``coarse" analog of the $\CAT(-1)$ rigidity theorem in \cite{BK02} and therefore also as an analog of Hamenst\"adt's theorem and of Bourdon's theorem (in the real-hyperbolic cases).

The precise statement of our main result is the following. We will discuss terminology and notation in subsequent sections, but let us make one important remark now. Rather than considering general quasi-M\"obius group actions, we restrict our attention to those that are \ti{strongly quasi-M\"obius}. In particular, each group element will act as a bi-Lipschitz homeomorphism. See Definition \ref{sqmdef} for a formal definition.

\begin{theorem} \label{mainthm}
Let $n \in \N$, $0<\e \leq 1$, and let $Z=(Z,d)$ be a compact metric space, homeomorphic to $\Sp^n$, and Ahlfors regular of dimension $n/\e$. Suppose that $\Gamma \acts Z$ is a strongly quasi-M\"obius action that is cocompact on triples. Assume, moreover, that $Z$ satisfies the following discrete length property:
\begin{equation} \label{length}
\begin{aligned}
\text{each } \delta \text{-path between two points }
x, y \text{ has length }\geq c \lp \tfrac{d(x,y)}{\delta} \rp^{1/\e}.
\end{aligned}
\end{equation}
Then there is a metric $d_{new}$ on $Z$ satisfying
$$C^{-1}d(x,y)^{1/\e} \leq d_{new}(x,y) \leq Cd(x,y)^{1/\e}$$
for some $C \geq 1$ and a bi-Lipschitz homeomorphism between $(Z,d_{new})$ and $\Sp^n$. Moreover, if $n \geq 2$, then this map can be taken to conjugate the action of $\Gamma$ on $Z$ to an action on $\Sp^n$ by M\"obius transformations.
\end{theorem}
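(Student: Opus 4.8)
\noindent\emph{Proof strategy.} The plan is to reduce the statement to Theorem~\ref{BKqm} by replacing the fractal metric $d$ with an honest metric $d_{new}$ for which $(Z,d_{new})$ is compact, Ahlfors $n$-regular, and of topological dimension $n$. The only plausible candidate is a metric comparable to the formal ``unsnowflake'' $d^{1/\e}$, but there are two obstructions: $d^{1/\e}$ is in general not a metric once $1/\e>1$ (the triangle inequality fails), and the naive infimum $\inf\sum_i d(x_i,x_{i+1})^{1/\e}$ over chains from $x$ to $y$ collapses to $0$ because $Z$ is connected and admits arbitrarily fine chains between any two points. Overcoming this is exactly where the discrete length property~\eqref{length} enters: it forces every sufficiently fine chain from $x$ to $y$ to be so long that its $\e$-weighted total cost stays bounded below by a multiple of $d(x,y)^{1/\e}$, which is precisely the degeneracy one must rule out. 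So the first step is a chaining construction (taking, for instance, the length metric induced by a scale-optimized chain functional) yielding a genuine metric $d_{new}$; using the one-edge chain for the upper bound and~\eqref{length} for the lower bound, one then checks that $C^{-1}d(x,y)^{1/\e}\le d_{new}(x,y)\le Cd(x,y)^{1/\e}$, and that $d_{new}$ is quasiconvex.

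Once $d_{new}$ is available, its good properties are mostly formal. Topological dimension is a topological invariant, so $(Z,d_{new})$ is still homeomorphic to $\Sp^n$ and of dimension $n$. Ahlfors $n$-regularity follows from the identity $\mathcal H^n_{d_{new}}\approx\mathcal H^n_{d^{1/\e}}=\mathcal H^{n/\e}_d$ together with the Ahlfors $(n/\e)$-regularity of $(Z,d)$, since a $d_{new}$-ball of radius $r$ is comparable to a $d$-ball of radius comparable to $r^{\e}$. Finally, the identity map $(Z,d)\to(Z,d_{new})$ is a power (snowflake-type) quasisymmetry; conjugating a uniformly quasi-M\"obius action by a quasisymmetry of a compact space again yields a uniformly quasi-M\"obius action, the property of being cocompact on triples is quasisymmetry-invariant, and taking $1/\e$-th powers of $d$ preserves bi-Lipschitz control, so $\Gamma\acts(Z,d_{new})$ is a strongly quasi-M\"obius action that is cocompact on triples.

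For $n\ge2$ one now applies Theorem~\ref{BKqm} directly to $(Z,d_{new})$, obtaining a quasisymmetric homeomorphism $\phi\colon(Z,d_{new})\to\Sp^n$ that conjugates $\Gamma\acts(Z,d_{new})$ to an action on $\Sp^n$ by M\"obius transformations. The remaining step---and the one I expect to be the main obstacle---is to upgrade $\phi$ from quasisymmetric to bi-Lipschitz without sacrificing equivariance, for then $\phi$ simultaneously furnishes the bi-Lipschitz homeomorphism and the M\"obius conjugacy. Here the plan is to exploit the abundance of paths guaranteed by~\eqref{length} (one should be able to promote $(Z,d_{new})$ to an $n$-Loewner space, giving $\phi$ enough regularity to be differentiable almost everywhere) together with the equivariance of $\phi$, the cocompactness of the dynamics, and the fact that the group elements act bi-Lipschitzly, and to run a rigidity/ergodicity argument in the spirit of the rigidity theorems of Sullivan and Tukia and of \cite{BK04} showing that the metric distortion of $\phi$ cannot oscillate. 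This is precisely the point at which the hypothesis $n\ge2$ is genuinely used: in dimension one the homeomorphism of $\Sp^1$ conjugating the action to a M\"obius one---which exists by convergence-group theory---is in general only quasisymmetric and not bi-Lipschitz (as deformations of Fuchsian groups already show), so no single map can serve both purposes.

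This is also why the M\"obius conclusion is asserted only for $n\ge2$. The case $n=1$ is instead handled directly: a compact, Ahlfors $1$-regular, quasiconvex metric space homeomorphic to $\Sp^1$ is bi-Lipschitz---indeed isometric up to a scaling factor---to the round circle, which gives the bi-Lipschitz homeomorphism in that case without reference to the group action. In summary, the two genuine difficulties are (a) carrying out the chaining construction so that $d_{new}$ is at once an honest metric and two-sidedly comparable to $d^{1/\e}$, where~\eqref{length} is indispensable, and (b) the bi-Lipschitz upgrade of $\phi$ for $n\ge2$, where topological rigidity (Theorem~\ref{BKqm}) must be combined with metric rigidity and the dimension hypothesis becomes essential.
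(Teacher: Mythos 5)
Your outline breaks down at both of its load-bearing steps. First, the construction of $d_{new}$: defining it by a multi-scale chain functional and claiming that the one-edge chain gives the upper bound while \eqref{length} gives the lower bound conceals the entire difficulty. Condition \eqref{length} controls only single-scale $\delta$-paths; it does not by itself bound $\sum_i d(x_i,x_{i+1})^{1/\e}$ from below for chains whose steps occur at many different scales, and the obvious stopping-time argument (coarsen the chain at scale $\theta d(x,y)$, recurse inside each block) loses the factor $c<1$ of \eqref{length} at every scale, so the constant degenerates and the induction does not close. This is precisely why the paper's de-snowflaking (Proposition \ref{mainprop}) goes the other way: $d_k$ counts \emph{single-scale} $k$-ball chains and $d_{new}=\limsup_k d_k$, so \eqref{length} gives the lower bound immediately, and the genuinely hard direction is the \emph{upper} bound (Lemma \ref{iterlemma}), which consumes all the remaining hypotheses: the conformal elevator produced by the strongly quasi-M\"obius, cocompact-on-triples action (Lemma \ref{nearfar}), the hypothesis that $Z$ is a topological $n$-sphere (to build the wide topological cube of Lemma \ref{thereisacube} via stereographic projection), Ahlfors $n/\e$-regularity (to bound the number of covering balls), and the discrete length--volume inequality for cubes (Proposition \ref{cubes}). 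Your construction invokes none of these, which is a strong indication that the comparability $d_{new}\approx d^{1/\e}$ cannot simply be ``checked''; as written this step is a gap, and filling it would require an argument of essentially the same depth as the paper's.

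Second, the passage to the bi-Lipschitz sphere and the M\"obius conjugacy. You apply Theorem \ref{BKqm} only qualitatively, obtaining a quasisymmetric conjugacy, and then propose to upgrade it to bi-Lipschitz via a Loewner/differentiability/ergodicity scheme which you yourself identify as the main obstacle and do not carry out (nor is the Loewner property of $(Z,d_{new})$ established or needed anywhere in the paper). The paper's route is different and complete: it proves quantitative versions of Bonk--Kleiner (Theorem \ref{BK2}) and of Tukia's theorem (Theorem \ref{Tukia}) in which a \emph{linear} distortion function is transformed into a linear one; since the action on $(Z,d_{new})$ is still strongly quasi-M\"obius, the resulting equivalence $\tilde f\colon(Z,d_{new})\to\Sp^n$ is strongly quasi-M\"obius, hence automatically bi-Lipschitz on compact spaces (Remark \ref{sqmisbilip}), for every $n\geq 1$; for $n\geq 2$ one post-composes with the strongly quasi-M\"obius (hence bi-Lipschitz) map $\psi$ from quantitative Tukia to obtain the M\"obius conjugacy. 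Your separate $n=1$ argument also leans on quasiconvexity of $d_{new}$, which you assert but never prove (an infimal chain functional is not obviously a length metric), whereas the paper treats $n=1$ by the same quantitative bookkeeping. So, beyond the correct but routine observations (invariance of topological dimension, transfer of Ahlfors regularity, preservation of the strongly quasi-M\"obius and cocompact-on-triples properties under the snowflake change of metric), both essential steps of your proposal are missing.
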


\begin{remark}
The assumption that $Z$ is homeomorphic to $\Sp^n$ can be replaced by the assumption that $Z$ is an $n$-dimensional manifold. Indeed, in this case, the expanding behavior of the group action forces $Z$ to be a topological $n$-sphere. See, for example, the proof of Theorem 4.4 in \cite{KapB01}.
\end{remark}

\begin{remark}
Recall that if $\rho$ is a metric on $Z$, then $\rho^{\e}$ is also a metric whenever $0<\e \leq 1$. The metric spaces $(Z,\rho^{\e})$ are typically called ``snowflakes" of $(Z,\rho)$, in reference to the standard construction of the von Koch snowflake. In Theorem \ref{mainthm}, we go in the opposite direction, ``de-snowflaking" the original metric $d$ on $Z$ to a metric $d_{new}$ with better regularity.
\end{remark}

When the metric dimension and the topological dimension of $Z$ coincide (i.e., if $\e = 1$), the results in \cite{BK02} give a bi-Lipschitz homeomorphism between $Z$ and $\Sp^n$. Once these dimensions differ, relationships between the metric structures of $Z$ and $\Sp^n$ are more delicate. While Ahlfors regularity gives good control on volume, and the strongly quasi-M\"obius action provides robust self-similarity structure in $Z$, additional assumptions are needed to obtain rigidity statements. We impose the condition \eqref{length} because, in the case where $Z$ is the boundary of a hyperbolic metric space $X$, it arises naturally from upper curvature bounds on $X$.

In concise terms, the discrete length condition \eqref{length} is strong enough that it forces $(Z,d)$ to be a ``snowflake" of $\Sp^n$. Once we de-snowflake, we are able to pass almost directly through the theorem of Bonk and Kleiner. It is natural to ask, then, if there are weaker conditions one can place on $Z$ that still guarantee it is, say, quasisymmetrically equivalent to $\Sp^n$. This would be of significant interest, in particular for $n=2$.

As we suggested above, Theorem \ref{mainthm} leads to a rigidity theorem in a coarse geometric setting. The objects considered here are (Gromov) hyperbolic metric spaces with an appropriate asymptotic upper curvature bound. These curvature bounds, denoted by $\AC_u(\kappa)$, were introduced by M. Bonk and T. Foertsch \cite{BF06} as a coarse analog to the $\CAT(\kappa)$ conditions. We will discuss this further in Section \ref{rcg}, but for now we only mention that $\AC_u(-1)$ is an appropriate replacement for $\CAT(-1)$.

For hyperbolic metric spaces $X$, even with asymptotic upper curvature bounds, there is no canonical Hausdorff dimension of the boundary, as there was for $\CAT(-1)$-spaces. Indeed, the visual metrics on $\bdry X$ form a H\"older class, and there is not a natural choice of a bi-Lipschitz sub-class. Thus, to formulate an entropy-rigidity statement here, we must look back inside $X$ and use the coarse version of volume entropy---the same quantity that bridged the results of Hamenst\"adt and Bourdon. Namely, if $X$ is a hyperbolic metric space and $\Gamma$ acts on $X$, the exponential growth rate of the action is
$$e(\Gamma) = \limsup_{R \rightarrow \infty} \frac{\log(N(R))}{R},$$
where $N(R) = \# \{ \Gamma p \cap B_X(p,R) \}$ is the number of points in an orbit $\Gamma p$ of distance at most $R$ from $p$. Once again, the limit is independent of $p \in X$. We then have the corresponding coarse rigidity theorem.

\begin{theorem} \label{rigidity}
Let $X$ be a proper, geodesic, Gromov hyperbolic metric space, and let $\Gamma \acts X$ be a quasi-convex geometric group action. Suppose that $\Lambda(\Gamma)$ is homeomorphic to $\Sp^n$, with $n \geq 2$, and that there is an orbit $\Gamma p$ that is $\AC_u(-1)$. Then
$e(\Gamma) \geq n$
and equality holds if and only if there is a rough isometry $\Phi \colon \Hy^{n+1} \rightarrow \Gamma p$ that is roughly equivariant with respect to a geometric action of $\Gamma$ on $\Hy^{n+1}$.
\end{theorem}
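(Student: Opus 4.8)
The plan is to deduce Theorem \ref{rigidity} from Theorem \ref{mainthm} by translating between the coarse geometry of $X$ and the metric geometry of its boundary $\bdry X = \Lambda(\Gamma)$. First I would equip $\Lambda(\Gamma)$ with a visual metric $d$ of parameter $\e$, where $e^{\e}$ is the visual parameter; the theory of visual metrics on boundaries of hyperbolic spaces gives that $d$ is Ahlfors regular of dimension $Q = e(\Gamma)/\e$, since the orbit growth rate $e(\Gamma)$ plays the role of the critical exponent and the Patterson--Sullivan-type measure is Ahlfors regular under the cocompactness-on-triples dynamics coming from the quasi-convex geometric action. The isometric action of $\Gamma$ on $X$ induces a uniformly quasi-M\"obius action on $(\Lambda(\Gamma),d)$ that is cocompact on triples (this is exactly the boundary dynamics described in the Bonk--Kleiner discussion preceding Theorem \ref{BKqm}); a small point to check is that with the strengthened curvature hypothesis $\AC_u(-1)$ this action is in fact strongly quasi-M\"obius, so that each group element is bi-Lipschitz — this should follow from the more precise control on cross-ratios that $\AC_u(-1)$ provides over a merely Gromov hyperbolic space.

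Next I would verify the discrete length property \eqref{length} with this $\e$. This is the technical heart of the reduction: one must show that the $\AC_u(-1)$ condition on the orbit $\Gamma p$ forces $\delta$-chains in $\bdry X$ to have the stated lower bound on length. The idea is that a $\delta$-path in $\Lambda(\Gamma)$ connecting $x$ to $y$ corresponds, via the visual metric, to a chain of geodesics (or of horoball-like shadows) in $X$, and the asymptotic upper curvature bound forces these chains to ``spread out'' at least as fast as geodesics in $\Hy^{n+1}$ do, which is precisely the quantitative statement \eqref{length}; I expect this to be where one invokes \cite{BF06} most directly, and it is the step I regard as the main obstacle, since it requires turning an asymptotic (coarse) curvature inequality into a uniform metric estimate on boundary chains. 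With \eqref{length} in hand, and since $\Lambda(\Gamma) \cong \Sp^n$ with $n \geq 2$, Theorem \ref{mainthm} applies: there is a de-snowflaked metric $d_{new}$ on $\Lambda(\Gamma)$, comparable to $d^{1/\e}$, and a bi-Lipschitz map $f\colon (\Lambda(\Gamma),d_{new}) \to \Sp^n$ conjugating $\Gamma \acts \Lambda(\Gamma)$ to an action by M\"obius transformations of $\Sp^n$.

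It remains to pass this boundary rigidity back inside $X$. A M\"obius action of $\Gamma$ on $\Sp^n = \bdry \Hy^{n+1}$ extends canonically to an isometric action of $\Gamma$ on $\Hy^{n+1}$; since the original action is cocompact on triples, so is this one, hence it is quasi-convex geometric, and in fact cocompact (as $\Lambda(\Gamma)$ is all of $\Sp^n$). I would then construct the rough isometry $\Phi\colon \Hy^{n+1} \to \Gamma p$ as a boundary-to-interior extension of $f^{-1}$: the fact that $f$ is bi-Lipschitz for the de-snowflaked metric $d_{new} \asymp d^{1/\e}$ means that the boundary map matches the two visual structures with the \emph{same} parameter, which by the standard dictionary between visual metrics and the ``filling'' geometry of hyperbolic spaces yields a map on the interiors that distorts distances by a bounded additive amount, i.e. a rough isometry; roughly equivariance is inherited from the equivariance of $f$. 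Finally I would observe $e(\Gamma) \geq n$: this is the combination of $\dimup_H \Lambda(\Gamma) \geq \dimup_{\text{top}} \Lambda(\Gamma) = n$ with the Ahlfors regularity identity $\dimup_H(\Lambda(\Gamma),d) = e(\Gamma)/\e$ and the elementary fact that the de-snowflaking only makes sense when $Q\e = e(\Gamma) \geq n$ — equality forcing $Z$ to be a genuine snowflake of $\Sp^n$, which is exactly the rigid case handled above.
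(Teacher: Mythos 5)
Your route is the paper's route: put a visual metric on $\Lambda(\Gamma)$, check that the induced boundary action is strongly quasi-M\"obius and cocompact on triples, derive the discrete length condition \eqref{length} from the $\AC_u(-1)$ hypothesis via the Bonk--Foertsch chain inequality, feed everything into Theorem \ref{mainthm}, extend the resulting M\"obius action to $\Hy^{n+1}$, and build $\Phi$ as a boundary-to-interior extension. However, two steps are genuinely incomplete as written. First, your argument for $e(\Gamma)\geq n$ only yields $e(\Gamma)\geq n\e$ for whatever single parameter $\e$ you happen to have a visual metric for; a generic hyperbolic space only admits visual metrics for small $\e$, and the remark that ``de-snowflaking only makes sense when $e(\Gamma)\geq n$'' is not an argument. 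The missing ingredient is that the $\AC_u(-1)$ condition on the orbit $Y=\Gamma p$ produces visual metrics on $\bdry Y=\Lambda(\Gamma)$ of \emph{every} parameter $\e<1$ (Proposition \ref{interval}), so that $e(\Gamma)\geq n\e$ for all $\e<1$; and this, like the transfer of the chain inequality to boundary chains (Proposition \ref{chains}) that underlies \eqref{length}, requires knowing that $Y$ is visual --- a point you never address and which the paper proves from quasi-convexity (Lemma \ref{Yvisual}). Second, ``cocompact on triples, hence geometric'' is not enough for the extended action on $\Hy^{n+1}$: geometricity also demands proper discontinuity. The paper gets this by showing the boundary action is properly discontinuous on triples (Lemma \ref{triples}(ii), a consequence of proper discontinuity of $\Gamma\acts X$), noting this is preserved under conjugation by $f$, and transferring it to the isometric action on $\Hy^{n+1}$; without it the conclusion ``geometric action of $\Gamma$ on $\Hy^{n+1}$'' is unjustified.

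Two smaller points. The strong quasi-M\"obius property of the boundary action is not where $\AC_u(-1)$ enters: it holds for any isometric action and any visual metric, because isometries preserve the cross-difference $(a,c)_p+(b,d)_p-(a,d)_p-(b,c)_p$ exactly, so cross-ratios are distorted only by the multiplicative constants of the visual metric (Lemma \ref{extension}); attributing it to the curvature bound suggests a harder verification than is actually needed. Finally, you never treat the easy ``if'' direction of the equivalence: a rough isometry $\Hy^{n+1}\to \Gamma p$ transfers the parameter-$1$ visual structure of $\Sp^n=\bdry\Hy^{n+1}$ to $\Lambda(\Gamma)$ by a bi-Lipschitz boundary extension, whence $e(\Gamma)=\dimup_H\Lambda(\Gamma)=n$; this is quick but is part of the statement.
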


\begin{remark}
Again, the assumption that $\Lambda(\Gamma)$ is a topological sphere can be weakened; it suffices to assume that $\Lambda(\Gamma)$ contains an open subset homeomorphic to $\R^n$. Indeed, this will imply that $\Lambda(\Gamma)$ is homeomorphic to $\Sp^n$ (cf.\ Theorem 4.4 in \cite{KapB01}). We leave as an open question, though, whether it suffices to assume only that the topological dimension of $\Lambda(\Gamma)$ is $n$.

Moreover, one should ask about the case $n=1$. We do not know if the conclusion in Theorem \ref{rigidity} holds in this case as well. See, however, Remark \ref{n=1r} at the end of Section \ref{rcg}, where we discuss what can be said in its place.
\end{remark}

This paper is organized as follows. In Section \ref{definitions}, we will introduce the necessary definitions, terminology, and background for the consideration and proof of Theorem \ref{mainthm}. Section \ref{sqm} will be devoted to a slightly technical study of strongly quasi-M\"obius group actions that will reveal some properties relevant for a ``de-snowflaking" result. In Section \ref{desnowflake}, we will state and prove this general de-snowflaking theorem, which forms the heart of the proof of Theorem \ref{mainthm}. In Section \ref{quant} we finish the proof of Theorem \ref{mainthm} by de-snowflaking and applying quantitative versions of theorems from \cite{BK02} and \cite{Tuk86}. We will also, of course, need to verify these quantitative versions. Finally, in Section \ref{rcg} we will prove Theorem \ref{rigidity} after discussing in more detail the terminology used in its statement.

\subsection*{Acknowledgements}
The author thanks Mario Bonk for many important insights and suggestions, and, generally, for introducing him to the wonderful relationship between hyperbolic and quasiconformal geometry. He is grateful to Ursula Hamenst\"adt and Peter Ha\"issinsky for helpful conversations during the IPAM workshop ``Interactions Between Analysis and Geometry." He also thanks Qian Yin and Humberto Silva Naves for useful discussions during the initial stages of this project. 

The author gratefully acknowledges partial support from NSF grant DMS-1162471.

\section{Definitions and Notation} \label{definitions}

Let $(Z,d)$ be a metric space. Occasionally, we will write $d_Z$ for the metric on $Z$ when this needs to be specified. If $x \in Z$ and $r>0$, then we use 
$$B(x,r) = \{ y \in Z : d(x,y) < r \}$$
to denote the open metric ball of radius $r$ about $x$.

If $x_1,x_2,x_3,x_4 \in Z$ are distinct points, we define their \ti{(metric) cross-ratio} as
$$[x_1,x_2,x_3,x_4] = \frac{d(x_1,x_3)d(x_2,x_4)}{d(x_1,x_4)d(x_2,x_3)}.$$
We are interested in maps between metric spaces that distort cross-ratios in a controlled manner. To make this precise, let $\eta \colon [0,\infty) \rightarrow [0,\infty)$ be a homeomorphism. Then a homeomorphism $f \colon X \rightarrow Y$ is called \ti{$\eta$-quasi-M\"obius} if 
$$[f(x_1),f(x_2),f(x_3),f(x_4)] \leq \eta([x_1,x_2,x_3,x_4])$$
for all distinct four-tuples $x_1,x_2,x_3,x_4 \in X$. Note that this definition makes sense for injective $f$ as well, but we will be concerned only with homeomorphisms in what follows.

A second class of maps that arise naturally in quasiconformal geometry are the quasisymmetric maps, which distort relative distances by a controlled amount. A homeomorphism $f \colon X \rightarrow Y$ is \ti{$\eta$-quasisymmetric} if 
$$\frac{d_Y(f(x_1),f(x_2))}{d_Y(f(x_1),f(x_3))} \leq \eta \lp \frac{d_X(x_1,x_2)}{d_X(x_1,x_3)} \rp$$
for all triples $x_1,x_2,x_3$ of distinct points in $X$.

The quasi-M\"obius and quasisymmetric conditions are closely related, though there are subtle differences. For example, every $\eta$-quasisymmetric map is $\tilde{\eta}$-quasi-M\"obius, where $\tilde{\eta}$ depends only on $\eta$. Conversely, if $X$ and $Y$ are bounded, then each individual $\eta$-quasi-M\"obius map will be $\tilde{\eta}$-quasisymmetric for \ti{some} $\tilde{\eta}$, but in general there is no quantitative relationship between $\eta$ and $\tilde{\eta}$. 

In this paper, we are mostly interested in studying metric spaces on which there is a group action by maps belonging to a particular function class. In such a context, the quasi-M\"obius and quasisymmetry conditions are very different. As quasi-M\"obius maps are the weaker of these two types, it makes sense to focus on these actions. This choice is further motivated by the following fact about hyperbolic groups (which occupy center stage in studying the geometry of hyperbolic metric spaces). If $G$ is a hyperbolic group and $\bdry G$ is its boundary (i.e., the Gromov boundary of the Cayley graph of $G$ with respect to a fixed finite generating set) equipped with a visual metric, then the isometric action of $G$ on its Cayley graph by translations extends to an action on $\bdry G$. Moreover, there is $\eta$ for which each $g \in G$ acts as an $\eta$-quasi-M\"obius map. Actually, something stronger is true: we can take $\eta$ to be linear (see Section \ref{rcg} for more details).

Quasi-M\"obius maps with a linear distortion function will play an important role in our analysis. Thus, we give them a name.

\begin{definition} \label{sqmdef}
A homeomorphism $f \colon X \rightarrow Y$ is called \ti{strongly quasi-M\"obius} if there is $C \geq 1$ for which
$$[f(x_1),f(x_2),f(x_3),f(x_4)] \leq C [x_1,x_2,x_3,x_4]$$
whenever $x_1,x_2,x_3,x_4 \in X$ are distinct.
\end{definition}

Each strongly quasi-M\"obius map between bounded metric spaces is actually bi-Lipschitz: there is a constant $C' \geq 1$ for which
$$\frac{1}{C'} d_X(x_1,x_2) \leq d_Y(f(x_1),f(x_2)) \leq C'd_X(x_1,x_2)$$
for all $x_1,x_2 \in X$ (see Remark \ref{sqmisbilip}). But again, the relationship between $C$ and $C'$ is not quantitative. We will study group actions by strongly quasi-M\"obius maps in much greater detail in subsequent sections.

Most of the group actions we encounter here will be of an expanding type, in the following sense.

\begin{definition} \label{coc}
An action of a group $\Gamma$ on a metric space $(Z,d)$ is said to be \ti{cocompact on triples} if there is $\delta > 0$ such that for every triple $x_1,x_2,x_3 \in Z$ of distinct points, there is a map $g \in \Gamma$ for which $d(gx_i,gx_j) \geq \delta$ if $i \neq j$.
\end{definition}

It should be no surprise that this assumption is again motivated by the geometry of hyperbolic groups: the action of a hyperbolic group on its boundary (equipped with a visual metric) is indeed cocompact on triples. More generally, the expanding behavior of a group action, combined with a (assumed) regularity of maps in the group, often translates into self-similarity properties of the metric space. See Lemma \ref{nearfar} for a particular manifestation of this principle.

A final metric property that will commonly undergird our spaces is a standard type of volume regularity.

\begin{definition}
A compact metric space $(Z,d)$ is \ti{Ahlfors $\alpha$-regular} (or Ahlfors regular of dimension $\alpha >0$) if there is a Borel measure $\mu$ on $Z$ and a constant $C \geq 1$ so that
\begin{equation} \label{Ahlfors}
\frac{1}{C}r^{\alpha} \leq \mu(B(x,r)) \leq Cr^{\alpha}
\end{equation}
for all $x \in Z$ and $0<r \leq  \diam Z$.
\end{definition}

Using standard covering arguments, it is not difficult to show that $Z$ is Ahlfors $\alpha$-regular if and only if \eqref{Ahlfors} holds with $\mu$ replaced by Hausdorff measure of dimension $\alpha$.

In subsequent sections, we will frequently encounter the $n$-dimensional sphere $\Sp^n$. Unless otherwise specified, we give it the chordal metric---the restriction of the Euclidean metric when $\Sp^n$ is viewed as the unit sphere in $\R^{n+1}$. However, every metric property of $\Sp^n$ that we consider will be preserved under a bi-Lipschitz change of coordinates. Thus, any metric that is bi-Lipschitz equivalent to the chordal metric would work just as well.

Finally, it will be convenient for us to suppress non-essential multiplicative constants in many inequalities. For quantities $A$ and $B$ that depend on some collection of input variables, we write $A \lesssim B$ to indicate that there is a constant $C$, independent of these variables, for which $A \leq C B$. When possible confusion could arise, we will indicate which data $C$ may depend on. For example, the bi-Lipschitz condition can be expressed simply as
$$d_X(x_1,x_2) \lesssim d_Y(f(x_1),f(x_2)) \lesssim d_X(x_1,x_2),$$
where the constants are uniform over all $x_1,x_2 \in X$.

\section{Strongly Quasi-M\"obius Group Actions} \label{sqm}

We now focus our attention on strongly quasi-M\"obius maps---those with a linear distortion function. Such maps tend to behave even more like traditional M\"obius functions than general quasi-M\"obius maps do. For example, each strongly quasi-M\"obius homeomorphism between bounded metric spaces is bi-Lipschitz (cf.\ Remark \ref{sqmisbilip}).

Strongly quasi-M\"obius maps are particularly important when they come in a group with uniform distortion constant. We will say that a group action $\Gamma \acts Z$ on a metric space $Z$ is \ti{strongly quasi-M\"obius} if there is a constant $C \geq 1$ for which every $g \in \Gamma$ is an $\eta$-quasi-M\"obius homeomorphism with $\eta(t) = Ct$.

The following lemma tells us that a strongly quasi-M\"obius group action that is cocompact on triples gives $Z$ locally self-similar structure: each ball can be blown up to a uniform scale by a homeomorphism that is essentially a scaling on that ball. See \cite[Section 2.3]{BS07} for a general discussion of local self-similarity in metric spaces. See also Lemma 5.1 in \cite{BK02} for a statement similar to ours, albeit in a slightly different context.

\begin{lemma} \label{nearfar}
Let $(Z,d)$ be a compact, connected metric space with at least two points, and let $\Gamma \acts Z$ be a strongly quasi-M\"obius group action which is cocompact on triples. For fixed $p \in Z$, $0 < r \leq \diam Z$, and $L \geq 2$, let $N=B(p,r)$ be a ``near" set and $F=Z\backslash B(p,Lr)$ be a ``far" set with respect to $p$. Then there is a map $g \in \Gamma$ satisfying the following:
\begin{enumerate}
\item[\textup{(i)}] $r \cdot d(x,y) \lesssim d(gx,gy) \lesssim (1/r) \cdot d(x,y)$ for all $x,y \in Z$,
\item[\textup{(ii)}] $(1/r) \cdot d(x,y) \lesssim d(gx,gy) \lesssim (1/r) \cdot d(x,y)$ for all $x,y \in N$,
\item[\textup{(iii)}] there exists $c > 0$ such that $B(gx,c) \subset gN$ for each $x \in B(p,r/2)$,
\item[\textup{(iv)}] $\diam gF \lesssim 1/L$.
\end{enumerate}
Here, the implicit constants and $c$ depend only on $\diam Z$, the constant $\delta$ in Definition \ref{coc}, and $C$ from the strongly quasi-M\"obius condition. In particular, they do not depend on $p$, $r$, or $L$.
\end{lemma}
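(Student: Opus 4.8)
The plan is to produce the desired $g$ in two stages: first use the cocompact-on-triples hypothesis to find a group element that "expands" a well-chosen triple to a definite scale, then read off the four conclusions from the linear cross-ratio distortion. To set up the triple, fix $p$ and $r$. Since $Z$ is connected with at least two points and compact, metric balls are not isolated, so I can choose a point $a \in \overline{B(p,r/2)}$ (for instance $a = p$ itself) and a point $b$ with $d(p,b)$ comparable to $r$ — connectedness guarantees that $B(p,r)$ meets both $\overline{B(p, r/2)}$ and its complement, so a point $b$ with, say, $r/2 \le d(p,b) < r$ exists unless $r > \diam Z$, which is excluded. Finally pick a point $e$ with $d(p,e)$ comparable to $\diam Z$, using connectedness again (or the fact that $Z$ has at least two points and diameter realized up to a factor). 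Now apply Definition \ref{coc} to the triple $\{a,b,e\}$: there is $g \in \Gamma$ with $d(gx,gy) \ge \delta$ for any two of these three image points.

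The heart of the argument is then to feed four-tuples involving $g a, g b, g e$ into the linear quasi-M\"obius inequality $[gx_1,gx_2,gx_3,gx_4] \le C[x_1,x_2,x_3,x_4]$ and its reverse (applying the inequality to $g^{-1}$, which is $\eta$-quasi-M\"obius with the same linear $\eta$ up to replacing $C$ by $C$ again — note $g^{-1}$ need not a priori have the same constant, so I should instead use that $[x_1,x_2,x_3,x_4] = 1/[x_2,x_1,x_3,x_4]$ together with the forward inequality for $g$ to get two-sided control, or invoke that the action is strongly quasi-M\"obius as a \emph{group} action so $g^{-1} \in \Gamma$ also has constant $C$). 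Concretely, for arbitrary $x,y \in Z$ I compare the cross-ratio $[x, a, y, e]$ before and after applying $g$; since $d(ga,ge), d(gx,ge), d(gy,ge)$ are all bounded above by $\diam gZ = \diam Z$ and below (the ones involving $ga, ge$) by $\delta$, this converts the cross-ratio estimate into a comparison of the form $d(gx,gy) \asymp d(x,y) \cdot d(ga,gb)/d(a,b) \cdot (\text{bounded factors})$, up to the terms $d(gx,ge)$ etc. that I must control separately. The single most delicate bookkeeping point — and what I expect to be the main obstacle — is that $d(gx,ge)$ and $d(gx,ga)$ are \emph{not} uniformly bounded below for general $x$; only the pairwise distances among the three chosen anchor points are. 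So I will need to run the cross-ratio comparison in a way that the potentially-small quantities always sit on the "correct" side of the inequality, which is exactly what forces the asymmetric form of conclusion (i): the lower bound has a factor $r$ and the upper bound a factor $1/r$, reflecting that we can only control one direction cleanly for far-apart or close-together pairs.

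Once (i) is in hand, the other three follow by restricting the class of $x,y$. For (ii): if $x,y \in N = B(p,r)$, then in the cross-ratio $[x,a,y,e]$ the distances $d(x,e), d(y,e)$ are now comparable to $\diam Z$ (since $x,y$ are within $r \le \diam Z$ of $p$, and $e$ is far from $p$ — here I use that $d(p,e)$ is a definite fraction of $\diam Z$ and $r$ may be smaller, with a short case check when $r$ is itself of order $\diam Z$), so the bounded-factor fudge disappears and we get the clean two-sided $d(gx,gy) \asymp (1/r) d(x,y)$. Conclusion (iv) is the mirror image: for $x,y \in F = Z \setminus B(p,Lr)$ I instead compare $[x, a, y, a']$ for suitable anchors, or more simply estimate $\diam gF$ directly by noting every $x \in F$ has $d(x,a) \gtrsim Lr$ while $d(ga, ge) \le \diam Z$, and push this through one cross-ratio involving $x$, $a$, a second far point, and $e$ to get $d(gx, ge) \lesssim \diam Z \cdot (r/(Lr)) \cdot (\dots) \lesssim 1/L$; combining over $x \in F$ gives $\diam gF \lesssim 1/L$. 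Finally (iii) follows from (ii): the map $g$ restricted to $N$ is bi-Lipschitz with constants $\asymp 1/r$, so the image of the smaller ball $B(p,r/2)$ contains, around each of its points, a ball of some definite radius $c$ (depending only on the implicit constants), and that ball lies in $gN$ because the $g$-image of the annulus $B(p,r) \setminus B(p,r/2)$ stays a definite distance — again of order $c$ — away from $g(B(p,r/2))$, by the lower Lipschitz bound in (ii). Throughout, every implicit constant that appears is a function only of $C$, $\delta$, and $\diam Z$, since those are the only quantities entering the cross-ratio manipulations; in particular nothing depends on $p$, $r$, or $L$, which is the final claim.
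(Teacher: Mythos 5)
Your overall strategy is the one the paper uses: separate a triple adapted to $p$ and $r$ via Definition \ref{coc}, then exploit the linear cross-ratio bound, using the $\delta$-separation of the image anchors and $\diam Z$ as the only uniform inputs. But two things in your plan do not go through as written. The decisive one is your derivation of (iii) from (ii). Statement (iii) is equivalent to the uniform lower bound $d(gx,gy)\geq c$ for every $x\in B(p,r/2)$ and every $y\in Z\setminus N$; such pairs are \emph{not} covered by (ii) (which requires $y\in N$), and (i) only gives $d(gx,gy)\gtrsim r\,d(x,y)\gtrsim r^2$, which degenerates as $r\to 0$. Your stated premise is also false: points $x$ just inside and $y$ just outside the sphere of radius $r/2$ can be arbitrarily close, so by the upper bound in (ii) their images are arbitrarily close, and the image of the annulus does not stay a definite distance from $g(B(p,r/2))$; and even if it did, nothing in (i)--(ii) prevents a point of $Z\setminus N$ from landing in $B(gx,c)$. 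Conclusion (iii) needs its own cross-ratio estimate; the paper runs one for $x\in B(p,r/2)$, $y\notin N$, with the key observation $d(y,x_i)\leq 5\,d(x,y)$ (so the troublesome ratio $d(y,x_i)/d(x,y)$ is bounded), combined with the lower bound of (ii) applied to the pair $(x,p)$.

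The second issue is your anchor configuration. The paper places all three anchors inside $B(p,r/2)$, with mutual distances $\asymp r$; this is what makes the case analysis uniform, because for the lower bounds one must choose \emph{which} two anchors to use according to the image configuration (at most one image anchor lies within $\delta/2$ of $gx$, and similarly for $gy$), and with all anchors at scale $r$ every forced choice still has domain-side distances $\asymp r$. Your fixed tuples such as $[x,a,y,e]$ instead require lower bounds on image quantities like $d(gx,ge)$ and $d(ga,gy)$, which are simply not available; you flag this as ``delicate bookkeeping'' but the adaptive, image-side choice of anchors is the actual content of the proof, not a detail to defer. Moreover, the far anchor $e$ at scale $\diam Z$ degenerates when $r$ is comparable to $\diam Z$: then $d(b,e)$ need not be bounded below by anything quantitative (the two points may even coincide), and the estimates that use $d(b,e)\asymp\diam Z$ collapse; this regime needs a separate treatment (or simply the paper's choice of anchors, which avoids the problem altogether). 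Your sketch of (iv), anchoring all of $gF$ near $ge$ via a cross-ratio with $a$, $b$, $e$, is workable in the nondegenerate regime and is in fact a small simplification over the paper, which has no far anchor and must first produce a point in $B(p,2Lr)\setminus B(p,Lr)$ by connectedness.
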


Observe that property (ii) tells us that on $N$, the map $g$ is basically a scaling by $1/r$, in that $g$ blows up $B(p,r)$ to a uniform scale. Property (iii) guarantees that $gN$ will contain large balls around images of points that are well inside $N$. Or, to put it negatively, points outside of $N$ cannot get mapped nearby the images of points well within $N$. Property (iv) shows that if we take $L$ to be large, we can map the ``far" set $F$ to something negligible.

\begin{proof}
Given $p$, $r$, and $L$, we first choose three points that we wish to $\delta$-separate. Let $x_1=p$, and choose $x_2$ to be a point for which $d(x_1,x_2) = r/2$. Then choose $x_3$ so that $d(x_1,x_3) = r/4$. Such points $x_2$ and $x_3$ exist by the assumption that $Z$ is compact and connected. Take $g \in \Gamma$ so that $gx_1, gx_2, gx_3$ have pairwise distances at least $\delta$. Now that we have chosen $g$, we use $x'$ to refer to the image of points $x$ under $g$.

(i) Let $x,y \in Z$ with $x \neq y$. Then there are $i,j \in \{1,2,3\}$ for which $d(x,x_i) \geq r/8$ and $d(x,x_j) \geq r/8$. Of these, either $d(y,x_i) \geq r/8$ or $d(y,x_j) \geq r/8$; without loss of generality, say $d(y,x_i) \geq r/8$. We then have 
$$\frac{d(x',y')d(x_i',x_j')}{d(x',x_j')d(y',x_i')} \lesssim \frac{d(x,y)d(x_i,x_j)}{d(x,x_j)d(y,x_i)} \lesssim \frac{d(x,y) \cdot r}{r/8 \cdot r/8} \lesssim \frac{d(x,y)}{r} ,$$
and so 
$$d(x',y') \lesssim \frac{d(x',x_j')d(y',x_i')}{d(x_i',x_j')}\cdot \frac{d(x,y)}{r} \lesssim \frac{1}{\delta} \cdot \frac{d(x,y)}{r} \lesssim \frac{d(x,y)}{r} ,$$
which is the second inequality in (i). Recall that the implicit constant is allowed to depend on $\diam Z$ and on $\delta$.

For the first inequality, take $i,j \in \{1,2,3\}$ for which $d(x',x_i') \geq \delta/2$ and $d(x',x_j') \geq \delta/2$. Then either $d(y',x_i') \geq \delta/2$ or $d(y',x_j') \geq \delta/2$; without loss of generality, say $d(y',x_i') \geq \delta/2$. Then 
$$\frac{d(x',x_j')d(y',x_i')}{d(x',y')d(x_i',x_j')} \lesssim \frac{d(x,x_j)d(y,x_i)}{d(x,y)d(x_i,x_j)}  \lesssim \frac{1}{d(x,y) \cdot r} ,$$
and so 
$$d(x,y) \lesssim \frac{1}{r} \cdot \frac{d(x',y')d(x_i',x_j')}{d(x',x_j')d(y',x_i')} \lesssim \frac{d(x',y')}{r} ,$$
as desired.

\begin{remark} \label{sqmisbilip}
The same reasoning can be used to show that any strongly quasi-M\"obius map between bounded metric spaces is necessarily bi-Lipschitz. Indeed, notice that after choosing $g$, the arguments in (i) use only four facts: pairwise distances between $x_1$, $x_2$, and $x_3$ are $ \geq r/4$; pairwise distances between $x_1'$, $x_2'$, and $x_3'$ are $\geq \delta$; the domain and image of $g$ are both bounded; and $g$ is strongly quasi-M\"obius. In general, then, if $f$ is a strongly quasi-M\"obius homeomorphism between bounded metric spaces, choose distinct points $x_1$, $x_2$, and $x_3$ in the domain. These four facts will hold for some $r,\delta > 0$, so we can conclude that $f$ is bi-Lipschitz.
\end{remark}

(ii) Let $x,y \in N$. The second inequality here is directly from (i). For the first inequality, take $i,j$ for which $d(x',x_j'), d(y',x_i') \geq \delta/2$ as we did above. Then
$$\frac{d(x',x_j')d(y',x_i')}{d(x',y')d(x_i',x_j')} \lesssim \frac{d(x,x_j)d(y,x_i)}{d(x,y)d(x_i,x_j)}  \lesssim\frac{2r \cdot 2r}{d(x,y) \cdot r/4} \lesssim \frac{r}{d(x,y)}  ,$$
and so 
$$d(x,y) \lesssim r \cdot \frac{d(x',y')d(x_i',x_j')}{d(x',x_j')d(y',x_i')} \lesssim r \cdot d(x',y')  ,$$
as claimed.

(iii) Fix $x \in B(p,r/2)$, and note that if $y \in Z\backslash N$, then $d(x,y) \geq r/2$. Taking $i \in \{2,3\}$ for which $d(y',x_i') \geq \delta/2$, we have
$$\frac{d(x',p')d(y',x_i')}{d(x',y')d(p',x_i')} \lesssim \frac{d(x,p)d(y,x_i)}{d(x,y)d(p,x_i)}\lesssim \frac{d(x,p)}{r} \cdot \frac{d(y,x_i)}{d(x,y)}  .$$
As $d(x,y) \geq r/2$, we also have $d(y,x_i) \leq d(x,y) + d(x,x_i) \leq d(x,y) + 2r \leq 5d(x,y)$, and so
$$\frac{d(x',p')d(y',x_i')}{d(x',y')d(p',x_i')} \lesssim \frac{d(x,p)}{r}  .$$
Thus,
$$d(x',y') \gtrsim \frac{d(x',p')d(y',x_i')}{d(p',x_i')}\cdot \frac{r}{d(x,p)} \gtrsim \frac{d(x',p')}{d(x,p)} \cdot r \gtrsim 1$$
by the bounds we established in (ii). Let $c$ be the implicit constant in this last inequality. Then $B(x',c) \subset gN$ by the fact that $g$ is surjective.

(iv) We may, of course, assume that $B(p,Lr)$ is not all of $Z$. Then fix a point $x \in B(p,2Lr)\backslash B(p,Lr)$. We claim that $gF$ is contained in a small ball centered at $x'$. Indeed, let $y \in F$, and observe that
$$\begin{aligned}
&d(x,y) \leq d(x,x_1) + d(y,x_1) \leq 2Lr+d(y,x_1) \leq 3d(y,x_1), \\
&d(x_1,x_2) = r/2, \\
&d(x,x_1) \geq Lr, \\
&d(y,x_2) \geq d(y,x_1) - d(x_1,x_2) \geq d(y,x_1) - r .
\end{aligned}$$
Thus, we have 
$$\frac{d(x',y')d(x_1',x_2')}{d(x',x_1')d(y',x_2')} \lesssim \frac{d(x,y)d(x_1,x_2)}{d(x,x_1)d(y,x_2)} \lesssim \frac{d(y,x_1) \cdot r}{Lr \cdot (d(y,x_1)-r)}.$$
As $d(y,x_1) \geq Lr$ and the function $t \mapsto t/(t - r)$ is decreasing for $t > r$, we obtain
$$\frac{d(y,x_1)\cdot r}{Lr\cdot (d(y,x_1)-r)} \leq \frac{Lr}{L(Lr-r)} \lesssim \frac{1}{L}  ,$$ 
and so
$$d(x',y') \lesssim \frac{1}{L}\cdot \frac{d(x',x_1')d(y',x_2')}{d(x_1',x_2')} \lesssim \frac{1}{L}  .$$
Consequently, $gF$ is contained in a ball of radius $\lesssim L^{-1}$ centered at $x'$, as needed.
\end{proof}

The previous lemma gives us a good understanding of the type of expanding behavior found in a strongly quasi-M\"obius group action, when it acts cocompactly on triples. This use of a group element to ``blow up" a ball to a uniform scale is sometimes called a ``conformal elevator" (see, for example, \cite{HaPil10}). One can therefore view Lemma \ref{nearfar} as a particular type of conformal elevator that comes with a strongly quasi-M\"obius group action. This elevator will be essential in the proof of our de-snowflaking result, coming in the next section.

Actually, in the proof of that result, it is the conformal elevator itself (rather than the strongly quasi-M\"obius action generating it) that will be important. In order to work in greater generality, we make the following definition.

\begin{definition} \label{elevator}
A metric space $(Z,d)$ admits a \ti{conformal elevator} if there exists a constant $C$ and a function $\omega \colon (0,\infty) \rightarrow (0,\infty)$ with $\omega(t) \rightarrow 0$ as $t \rightarrow 0$ such that, for every choice of $p \in Z$, $0< r \leq \diam Z$, and $\lambda \geq 2$, there is a homeomorphism $g \colon Z \rightarrow Z$ with the following properties:
\begin{enumerate}
\item[\textup{(i)}] $d(gx,gy) \leq Cd(x,y)/r$ for all $x,y \in B(p,\lambda r),$
\item[\textup{(ii)}] $C^{-1} d(x,y)/r \leq d(gx,gy)$ for all $x,y \in B(p,r),$
\item[\textup{(iii)}] $B(gx,1/C) \subset g(B(p,r))$ for all $x \in B(p,r/C),$
\item[\textup{(iv)}] $\diam \lp Z \backslash g(B(p,\lambda r))\rp \leq \omega(1/\lambda)$.
\end{enumerate}
\end{definition}

The conclusions in Lemma \ref{nearfar} tell us that if $Z$ admits a strongly quasi-M\"obius group action that is cocompact on triples, then it admits a conformal elevator. We now turn our attention toward using the conformal elevator to de-snowflake a metric space.

\section{De-snowflaking} \label{desnowflake}

This section is devoted to establishing the following proposition, which provides quantitative conditions under which a metric space can be de-snowflaked by a particular amount.

\begin{prop} \label{mainprop}
Fix $n \in \N$ and $0<\e < 1$. Let $(Z,d)$ be a metric space with the following properties:
\begin{enumerate}
\item[\textup{(i)}] $Z$ is homeomorphic to $\Sp^n$,
\item[\textup{(ii)}] $Z$ admits a conformal elevator, in the sense of Definition \ref{elevator},
\item[\textup{(iii)}] every $\delta$-separated set in $Z$ has size at most $C\delta^{-n/\e}$,
\item[\textup{(iv)}] every discrete $\delta$-path from $x$ to $y$ in $Z$ has length at least \\$(1/C)\cdot(d(x,y)/\delta)^{1/\e}$.
\end{enumerate}
Then there is a metric $d_{new}$ on $Z$ satisfying
\begin{equation} \label{dnew}
d(x,y)^{1/\e} \lesssim d_{new}(x,y) \lesssim d(x,y)^{1/\e}
\end{equation}
where the implicit constant depends only on the data from assumptions \textup{(i)}--\textup{(iv)}.
\end{prop}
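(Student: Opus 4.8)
The goal is to build $d_{new}$ comparable to $d^{1/\e}$. The natural candidate is the \emph{path metric} associated to $d^{1/\e}$: define, for a (continuous) curve $\gamma$ in $Z$, a length $\ell_{1/\e}(\gamma)$ by summing $d(x_{i-1},x_i)^{1/\e}$ over fine subdivisions (equivalently, the $(1/\e)$-Hausdorff-type length coming from the snowflaked metric $\rho := d^{1/\e}$, which is indeed a metric since $1/\e > 1$ is \emph{not} the right exponent for snowflaking --- rather we are de-snowflaking, so the point is that $\rho$ itself may be badly behaved). Then set
\begin{equation} \label{dnewdef}
d_{new}(x,y) = \inf_{\gamma} \ell_{1/\e}(\gamma),
\end{equation}
the infimum over curves $\gamma$ joining $x$ to $y$. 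Since $Z\cong\Sp^n$ is connected and locally connected, there are curves between any two points, but $\ell_{1/\e}$ of a curve could a priori be infinite; the first order of business is therefore to prove that $d_{new}$ is finite, positive, and a genuine metric, and that it satisfies the two-sided bound \eqref{dnew}.

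\textbf{Step 1: the easy direction $d_{new}(x,y) \gtrsim d(x,y)^{1/\e}$.} This is essentially assumption (iv). Given any curve $\gamma$ from $x$ to $y$ and any $\delta>0$, subdivide $\gamma$ into a discrete $\delta$-path $x = p_0, p_1, \dots, p_N = y$ (possible by uniform continuity of $\gamma$), of length $N$. Assumption (iv) gives $N \geq (1/C)(d(x,y)/\delta)^{1/\e}$, hence $\sum_i d(p_{i-1},p_i)^{1/\e} \geq N \cdot (\text{something})$... more precisely, refine the estimate: along the $\delta$-path, $\ell_{1/\e}$ computed on this subdivision is at least $N \cdot 0$, which is useless, so instead one argues scale by scale. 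Cover the curve by a chain realizing the path metric and use (iv) at the scale $\delta \approx$ the mesh; letting the mesh tend to $0$ and summing a geometric series in dyadic scales yields $\ell_{1/\e}(\gamma) \gtrsim d(x,y)^{1/\e}$ uniformly in $\gamma$. In particular $d_{new}(x,y)>0$, so $d_{new}$ separates points and is a metric (symmetry and the triangle inequality are immediate from the infimum-over-curves definition).

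\textbf{Step 2: the hard direction $d_{new}(x,y) \lesssim d(x,y)^{1/\e}$ --- this is the main obstacle.} Here is where the conformal elevator (assumption (ii)) and the volume bound (assumption (iii)) enter, and this is the technical heart of the argument (and of the whole paper). The strategy is a stopping-time / chaining construction at a single scale, promoted to all scales by self-similarity. Fix $x,y$ with $d(x,y) = s$. One wants to connect $x$ to $y$ by a curve whose $\ell_{1/\e}$-length is $\lesssim s^{1/\e}$. Using the elevator with $p$ near $x,y$, $r \approx s$, and $\lambda$ large, one transports the problem to unit scale: it suffices to build, between any two points at distance $\asymp 1$, a curve of controlled $\ell_{1/\e}$-length, \emph{provided} this is done with constants that survive the elevator distortion and can be iterated. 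The construction of the unit-scale connecting curve proceeds by a Christ-type dyadic decomposition of $Z$: assumption (iii) (the $\delta$-separated sets have size $\lesssim \delta^{-n/\e}$, i.e. Ahlfors upper-regularity of dimension $n/\e$) gives a good covering at each dyadic scale $2^{-k}$ by $\lesssim 2^{kn/\e}$ balls; one builds a graph on these balls and finds a path of length $\lesssim 2^{k}$ edges at scale $2^{-k}$ (this is where homeomorphism to $\Sp^n$ and the topological dimension being $n$ must be used --- $Z$ cannot be disconnected by small sets, so there are short chains; this is the classical ``$Z$ is LLC / uniformly perfect and has no small separators'' type input, analogous to Lemma-level facts in \cite{BK02}). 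Each edge at scale $2^{-k}$ contributes $\asymp (2^{-k})^{1/\e}$ to $\ell_{1/\e}$, so the total over that scale is $\lesssim 2^k \cdot 2^{-k/\e} = 2^{-k(1/\e - 1)}$, and since $1/\e - 1 > 0$ this is summable in $k$. Refining the chain across scales (replace each scale-$2^{-k}$ edge by a scale-$2^{-k-1}$ sub-chain inside a neighborhood) produces an actual curve as the limit, with $\ell_{1/\e} \lesssim \sum_k 2^{-k(1/\e-1)} \lesssim 1$. Pulling back through the elevator and keeping track of the distortion constant $C$ from Definition \ref{elevator}(i)--(iv) gives the scale-$s$ bound $d_{new}(x,y) \lesssim s^{1/\e}$. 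The delicate points are: (a) ensuring the multiplicative constants do not blow up when iterating across scales --- this is exactly why the elevator is quantitative and scale-independent; (b) ensuring the limiting object is a genuine curve (uniform convergence of the refinement), which follows because the refinements are nested within shrinking neighborhoods of total diameter $\to 0$; and (c) extracting the "short chain at every scale" from the topological hypotheses, which I expect to be the most technical lemma --- it should follow from $Z \cong \Sp^n$ together with upper Ahlfors regularity, in the spirit of the connectedness/LLC arguments in \cite{BK02}.

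\textbf{Step 3: assembling the metric.} Once \eqref{dnew} holds, $d_{new}$ is automatically a metric (Step 1 gives positivity; the rest is formal for a path-metric infimum) and it induces the same topology as $d$ (the two-sided comparison with $d^{1/\e}$, and $d^{1/\e}$ is topologically equivalent to $d$). This completes the proof of Proposition \ref{mainprop}. I expect Step 2 to consume the bulk of the work; Steps 1 and 3 are short. The role of each hypothesis is then transparent: (i) gives existence of connecting curves and feeds the topological short-chain lemma; (ii) lets one reduce everything to a single scale and iterate without constant loss; (iii) bounds the number of pieces at each scale (upper mass bound), making the scale-by-scale sum converge; (iv) is precisely the matching lower bound. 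The interplay $1/\e > 1$ is what makes the geometric series $\sum 2^{-k(1/\e - 1)}$ converge, which is the arithmetic reason de-snowflaking by exactly the exponent $1/\e$ works and no better exponent does.
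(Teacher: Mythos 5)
Your Step 2 contains the genuine gap, and it is the heart of the matter. The decisive quantitative claim --- that at $d$-scale $2^{-k}$ one can cross unit distance by a chain of $\lesssim 2^{k}$ balls, so that each scale contributes $\lesssim 2^{k}\cdot 2^{-k/\e}=2^{-k(1/\e-1)}$ and the scales sum geometrically --- is not only unproved but impossible: hypothesis (iv) itself forces any chain of balls of $d$-radius $2^{-k}$ joining two points at unit distance to contain $\gtrsim 2^{k/\e}\gg 2^{k}$ balls (check it on the model snowflake $(\Sp^n,\rho^{\e})$). With the correct count, the cost of crossing a fixed distance at scale $2^{-k}$ is $\approx 2^{k/\e}\cdot 2^{-k/\e}\approx 1$, so nothing is summable ``for free''; the convergence of your series is an artifact of the wrong exponent, and your closing remark attributing the de-snowflaking to the convergence of $\sum 2^{-k(1/\e-1)}$ rests on the same error. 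Moreover, the matching \emph{upper} bound on chain length, $\lesssim (D/\delta)^{1/\e}$ balls of radius $\delta$ to cross distance $D$, is exactly the content you defer to an unproven ``most technical lemma'' and guess follows from LLC/no-small-separator arguments. It does not: connectivity-type arguments give existence of chains but no bound of this sharpness, and it is precisely here that the tension between topological dimension $n$ and metric dimension $n/\e$ must do quantitative work.

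The paper's mechanism for this step is quite different and is the actual core of the proof. One first proves a discrete length--volume inequality for coverings of a topological $n$-cube (Proposition \ref{cubes}, a discrete Derrick-type theorem proved via a partition-of-unity map into $\Z^n$ and a homology argument): a cover by $N$ sets satisfies $N\geq d_1\cdots d_n$, where $d_i$ is the minimal chain length joining the $i$-th pair of opposite faces. Then, for two balls at distance $\approx e^{-\e m}$, the conformal elevator blows the configuration up to unit scale, a stereographic-projection argument (Lemma \ref{thereisacube}) locates a wide topological cube with one pair of opposite faces inside the images of the two balls and avoiding the image of the far set, and one pulls the cube back. The $k$-balls meeting the ambient ball cover this cube with $N\lesssim e^{n(k-m)}$ sets by (iii), while (iv) plus the elevator's distortion bounds force \emph{every} chain joining opposite faces to have length $\gtrsim e^{k-m}$; pigeonholing through $N\geq d_1\cdots d_n$ then yields a chain of length $\lesssim e^{k-m}$ joining the two balls (Lemma \ref{iterlemma}). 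Thus the short chain is obtained by a counting/duality argument, not by constructing it from local connectivity. Finally, since the per-scale costs do not shrink, the passage from Lemma \ref{iterlemma} to the upper bound in \eqref{dnew} requires the paper's bifurcating multi-scale assembly, whose geometric series is in $(2/e)^m$ (two new subproblems per step versus a factor-$e$ gain in scale), not in $2^{-k(1/\e-1)}$. Your Step 1 is also shakier than needed --- with the power sum over fine subdivisions of a curve, (iv) bounds the \emph{number} of steps but not the power sum when steps live at mixed scales, which is why the paper works with uniform-scale ball chains $d_k$, where (iv) applies verbatim --- but that is repairable; the missing length--volume/pigeonhole mechanism is not.
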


Recall that a ``$\delta$-separated set" is simply a set of points for which pairwise distances are at least $\delta$. Also, by a ``discrete $\delta$-path from $x$ to $y$" we mean a chain of points
$x=z_0, z_1, \ldots, z_l = y$
in $Z$ with $d(z_i,z_{i-1}) \leq \delta$. The length of such a chain is $l$; notice that this is one less than the number of points in the chain.

Some remarks on the assumptions in Proposition \ref{mainprop} are in order. The first condition gives $Z$ non-trivial topological structure and guarantees that the metric structure of $(Z,d)$ on large scales is similar to that of the standard sphere. The conformal elevator, as we have already said, gives us a way of moving from small scales to a uniformly large scale. The third assumption should be thought of as a volume condition; for example, it is easily implied by Ahlfors $n/\e$-regularity. In fact, our condition is similar to assuming that the Assouad dimension of $(Z,d)$ is $n/\e$. See \cite[Chapter 9]{BS07} for a discussion on various notions of metric dimension. Finally, condition (iv) is exactly the discrete length assumption that appears in Theorem \ref{mainthm}. It basically functions as a one-dimensional metric condition. Thus, the essential ingredients to our de-snowflaking result are topological regularity, metric self-similarity, upper bounds on volume, and lower bounds on one-dimensional metric structure.

Regarding the conclusion of the proposition, the ``data from assumptions (i)--(iv)" include the following: the parameters $\e$ and $n$; the diameter of $Z$; the constant $C$ from conditions (ii), (iii), and (iv); the function $\omega$ from the conformal elevator; and the modulus of continuity of a fixed homeomorphism between $Z$ and $\Sp^n$. We will also refer to these as the ``data associated to $Z$."

The proof of Proposition \ref{mainprop} will proceed as follows. We begin by fixing, for each length scale $e^{-\e k}$, a cover of $Z$ by metric balls. For $x,y \in Z$, the smallest number of these balls needed to join $x$ and $y$ provides a ``fuzzy" notion of distance at scale $e^{-\e k}$. After a proper normalization of this fuzzy distance, we let $k$ tend to infinity to obtain the metric $d_{new}$.

The lower bound in \eqref{dnew} will follow almost directly from the discrete length condition in (iv). The upper bound is more complicated, but in it we will see a nice interplay between the topological and metric structures of $Z$, which are linked together by the existence of the conformal elevator.

\subsection{The definition of $d_{new}$} \label{defdnew}

Fix notation as in the statement of the proposition. In particular, we let $C$ be a constant large enough so that conditions (iii) and (iv) hold, as well as the conditions from the definition of a conformal elevator. We also let $\omega$ be the function associated with the conformal elevator on $Z$. This notation will remain fixed throughout the proof.

By scaling the metric $d$, we may assume for simplicity that $\diam Z=1$. Indeed, the implicit constant in the desired conclusion is allowed to depend on the diameter, so we lose no generality.

For each $k \in \N$, fix a maximal $e^{-\e k}$-separated set in $Z$ and call it $P_k$. This also will remain fixed throughout the proof. It is not difficult to see that if $P$ is any $e^{-\e k}$-separated set contained in a ball $B(p,r)$, with $0<r\leq 1$, then
\begin{equation} \label{separatedset}
\# P \lesssim r^{n/\e} \cdot e^{nk}.
\end{equation}
Indeed, this follows from the volume bound in (iii) after applying the conformal elevator for $p$, $r$, and $\lambda=2$. In particular, if $r=e^{-\e m}$, then such a set has size $\lesssim e^{n(k-m)}$.

By maximality of $P_k$, we mean with respect to set inclusion. This is, of course, equivalent to $$Z= \bigcup_{x \in P_k} B(x,e^{-\e k})$$ for each $k$. It will be more convenient to work with balls of twice this radius, and so we refer to 
$$\{ B(x,2e^{-\e k}) : x \in P_k \}$$
as the set of \textit{$k$-balls}. For notational simplicity, we may abbreviate $B(x,2e^{-\e k})$ by $B_k(x)$ when $x \in P_k$. Often, the center-point $x$ of $B_k(x)$ is not important. As a result, we will usually denote $k$-balls simply by $B$ or $B_i$, e.g., when dealing with a chain of such balls. In these cases, $k$ will be understood from the context.

We first observe that for each $k$, the set of $k$-balls has controlled overlap, in that each $k$-ball intersects at most a uniformly bounded number of $k$-balls. Indeed, if $B_k(x)$ is a $k$-ball and $B_k(x_i)$, $1\leq i\leq m$, are those that intersect $B_k(x)$ non-trivially, then the collection $\{x_1,\ldots,x_m\}$ is an $e^{-\e k}$-separated set in the ball $B(x,4e^{-\e k})$. By (\ref{separatedset}) above, we get
\begin{equation} \label{overlap}
m \lesssim (4e^{-\e k})^{n/\e} \cdot e^{nk} \lesssim 1,
\end{equation}
where the implicit constant is allowed to depend on $n$ and $\e$.

A sequence of $k$-balls $B_1,B_2,\ldots,B_l$ with $B_i \cap B_{i+1} \neq \emptyset$ is called a \ti{$k$-ball chain}. We say that such a chain \ti{connects} two points $x$ and $y$ if $x \in B_1$ and $y \in B_l$. Observe that, as $B_i$ may not be a connected set itself, chains may not be connected topologically. This will pose no problem for our later analysis, though.

The length of a $k$-ball chain is simply the number of balls appearing in it, counted with multiplicity. For each $k$, let 
$$d_k(x,y) = \left( \text{length of shortest } k\text{-ball chain connecting } x \text{ and } y \right) \cdot e^{-k}.$$
The normalization by $e^{-k}$ is appropriate; indeed, each $k$-ball has diameter approximately $e^{-\e k}$ with respect to $d$, so its diameter with respect to the sought-after $d_{new}$ should be approximately $e^{-k}$. Note that $d_k$ is not actually a metric; for each $x$ we have $d_k(x,x) = e^{-k}$. But $d_k$ is symmetric and the triangle inequality clearly holds.

We now set $$d_{new}(x,y) = \limsup_{k \rightarrow \infty}d_k(x,y).$$ It is not immediate that this is a metric either. It is certainly symmetric, has $d_{new}(x,x)=0$ for all $x$, and satisfies the triangle inequality. The inequalities $0 < d_{new}(x,y) < \infty$ for $x \neq y$ will, however, be a consequence of proving that $d_{new}$ is bi-Lipschitz equivalent to $d^{1/\e}$. Thus, we wish to show that for each pair of distinct points $x,y \in Z$, $$d(x,y)^{1/\e} \lesssim d_k(x,y) \lesssim d(x,y)^{1/\e}$$ for $k$ large enough, where the implicit constants depend only on the data associated to $Z$.

As we mentioned before, the lower bound will be an easy consequence of assumption (iv) in the statement of the proposition---the lower bound on the length of discrete ``paths" between points. We quickly verify this.

Let $x,y \in Z$ be distinct and let $B_1,\ldots,B_l$ be a $k$-ball chain connecting $x$ and $y$. Then $x \in B_1$ and $y \in B_l$. Choose $x_i \in B_i \cap B_{i+1}$ for each $1\leq i \leq l-1$ and consider the discrete path
$$x=x_0,x_1,\ldots,x_{l-1},x_l = y$$
from $x$ to $y$. Observe that $d(x_i,x_{i+1}) \leq 4e^{-\e k}$, as $\diam B_i \leq 4e^{-\e k}$. Consequently,
$$l \gtrsim \lp \frac{d(x,y)}{4e^{-\e k}} \rp^{1/\e},$$
so that $l \gtrsim d(x,y)^{1/\e}\cdot e^k$ where the implicit constant depends only on $C$ and $\e$. This gives immediately that $d_k(x,y) \gtrsim d(x,y)^{1/\e}$, as desired.

We now turn to the upper bound, which is much more subtle. To obtain it, we will use the lower bound, along with a discrete length-volume inequality for cubes.

\subsection{The upper bound}
We begin by stating the crucial lemma, which will almost immediately give the upper bound when applied iteratively. This method of proof was motivated by a similar argument in \cite{Yin11}, where the author also sought to establish upper bounds on ``tile" chains connecting two points.

\begin{lemma} \label{iterlemma}
Let $x,y \in Z$ and $m \in \N$ with $d(x,y) \leq e^{-\e (m-1)}$. Then for each $k \geq m$, there is a $k$-ball chain connecting $B(x,e^{-\e m})$ and $B(y,e^{-\e m})$ of length at most $C'e^{k-m}$. Here, $C'$ depends only on the data associated to $Z$.
\end{lemma}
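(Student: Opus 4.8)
The plan is to prove Lemma \ref{iterlemma} by induction on $k \geq m$, using the conformal elevator to transfer the problem to a fixed ``unit scale'' where the $k$-ball chains being counted correspond to chains at a smaller depth. First I would dispose of the base case $k = m$: here the two balls $B(x, e^{-\e m})$ and $B(y, e^{-\e m})$ each have diameter $\lesssim e^{-\e m}$ and overlap at scale $e^{-\e(m-1)}$, so a bounded-length $m$-ball chain connects them (using the controlled overlap estimate \eqref{overlap} and the fact that $e^{-\e(m-1)} \approx e^{-\e m}$ up to the constant $e^{\e}$). This gives $C' e^{k-m} = C'$ balls, which suffices provided $C'$ is large enough.

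For the inductive step, suppose the claim holds for some $k \geq m$; I want to produce a $(k+1)$-ball chain of length $\lesssim e^{(k+1)-m}$. The key move is to apply the conformal elevator at the point $p = x$ (or some convenient nearby point), radius $r = e^{-\e m}$, and dilation factor $\lambda$ a fixed large constant. This yields a homeomorphism $g$ which (by properties (i) and (ii) of Definition \ref{elevator}) behaves like multiplication by $e^{\e m}$ on the relevant ball, so that $g$ carries $B(x, e^{-\e m})$ and $B(y, e^{-\e m})$ to sets of bounded diameter at unit scale, and carries the $(k+1)$-balls inside this region to sets comparable to balls of radius $\approx e^{-\e(k+1-m)}$, i.e. essentially $(k+1-m)$-balls. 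Thus a $(k+1-m)$-ball chain connecting $gB(x,e^{-\e m})$ and $gB(y,e^{-\e m})$ of length $\ell$ pulls back, via $g^{-1}$ and a bounded-multiplicity refinement (again by \eqref{overlap} and the bi-Lipschitz-on-balls control), to a $(k+1)$-ball chain of length $\lesssim \ell$. Now I need a $(k+1-m)$-ball chain at unit scale of length $\lesssim e^{k+1-m}$ connecting two bounded sets in $Z$ — but this is exactly the statement we already have access to: either it follows from the inductive hypothesis applied in rescaled form, or more cleanly, we reduce the whole lemma to the special case $m = 1$ (connecting two points with $d(x,y) \le 1 = \diam Z$) and prove \emph{that} by a separate induction. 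In the $m=1$ case the target length bound is $\lesssim e^{k-1} \lesssim e^k$, and one builds the chain by a dyadic/tree decomposition: cover $Z$ (which is homeomorphic to $\Sp^n$) by a bounded number of balls of radius $1/C$, connect $x$ and $y$ through a bounded chain of these using topological connectivity of $\Sp^n$, then recurse into each, at each stage using the conformal elevator to zoom in and the controlled-overlap bound to keep the branching bounded; the volume bound \eqref{separatedset} controls the total count at each level, and summing the geometric series in $k$ gives $\lesssim e^k$.

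The point where the discrete length hypothesis (iv) enters — and where I expect the real subtlety to lie — is in making sure the recursion does not produce a chain that is too \emph{long}. Pure volume counting (iii) bounds how many $k$-balls there are near a given point, but a naive chain-building argument could wander and produce length exponential in $k$ with the wrong constant, or could fail to terminate with the right geometric scaling. The lower bound already proved in Section \ref{defdnew}, namely $d_k(x,y) \gtrsim d(x,y)^{1/\e}$, together with hypothesis (iv), is what pins the correct exponent: it tells us that any chain has length $\gtrsim (d(x,y)/e^{-\e k})^{1/\e} \approx d(x,y)^{1/\e} e^k$, so the target $\lesssim e^{k-m}$ is the \emph{right} order of magnitude and is not being beaten by the lower bound — consistency that must be exploited, presumably by using (iv) to bound the lengths of the chains produced at intermediate scales of the recursion (a chain living in a ball of radius $e^{-\e j}$ should contribute $\lesssim e^{k-j}$, and these sum geometrically).

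\textbf{Main obstacle.} The hardest part will be organizing the recursion/induction so that the bounded-multiplicity refinement under $g^{-1}$ (turning ``rescaled $(k+1-m)$-balls'' back into genuine $(k+1)$-balls) does not lose a factor that compounds over the $\approx k$ levels of iteration — i.e. keeping the implicit constant $C'$ truly uniform rather than growing with $k$. This requires being careful that each application of the conformal elevator costs only a multiplicative constant that is \emph{absorbed}, not multiplied, when passing between consecutive scales, which is why it is cleaner to set up a single induction on $k$ with a fixed $C'$ chosen at the end, rather than iterating a lemma with a worsening constant.
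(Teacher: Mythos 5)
There is a genuine gap, and it sits exactly where you flag ``the real subtlety'': your induction/recursion has no mechanism to produce the exponent $e^{k-m}$. In the inductive step (or in your dyadic recursion reduced to $m=1$), refining one $k$-ball into a $(k+1)$-ball chain requires connecting two points at distance $\lesssim e^{-\e k}$ by balls of radius $\approx e^{-\e(k+1)}$; the volume bound (iii) and controlled overlap only tell you that the number of such balls available nearby is $\lesssim e^{n}$, so each level of the recursion costs a multiplicative factor like $e^{n}$ (or a large constant $M$ depending on $C,n,\e$), and compounding over $k-m$ levels gives $e^{n(k-m)}$ or $M^{k-m}$ rather than $e^{k-m}$. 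Getting the per-level cost down to $e$ \emph{is} the content of the lemma, so the proposed induction is essentially circular: the ``bounded chain at each scale, then recurse, sum the geometric series'' scheme is precisely how the paper deduces the global upper bound $d_k(x,y)\lesssim d(x,y)^{1/\e}$ \emph{from} Lemma \ref{iterlemma}, and it only converges because the lemma already hands you, in one shot, a $k$-ball chain of length $C'e^{k-(h+m-1)}$ at every intermediate scale. Note also that hypothesis (iv) gives only \emph{lower} bounds on chain lengths, so invoking it ``to bound the lengths of the chains produced at intermediate scales'' cannot work as stated.

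The missing idea is the paper's discrete length--volume inequality. One applies the conformal elevator a single time, at $p=x$, $r\approx e^{-\e(m-1)}$, with a fixed large $\lambda$, blowing $B_x$ and $B_y$ up to definite size while crushing $Z\setminus B(p,\lambda r)$ to a set of tiny diameter; transporting to $\Sp^n$ by a fixed homeomorphism and using stereographic projection (Lemma \ref{thereisacube}), one finds a topological $n$-cube $S\subset B(p,\lambda r)$ with one pair of opposite faces inside $B_x$ and $B_y$ and all pairs of opposite faces uniformly separated after the blow-up. The $k$-balls meeting $B(p,\lambda r)$ cover $S$ and number $\lesssim e^{n(k-m)}$ by (iii). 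The length hypothesis (iv), pushed through property (i) of the elevator, shows that any chain of these sets joining \emph{any} pair of opposite faces has length $d_i\gtrsim e^{k-m}$. Proposition \ref{cubes} (proved by a homological degree argument) gives $\#\mathcal U\ge d_1\cdots d_n$, so $e^{n(k-m)}\gtrsim d_1\cdot e^{(n-1)(k-m)}$, forcing $d_1\lesssim e^{k-m}$: a short chain joining $B_x$ to $B_y$ exists because long chains in all $n$ directions would violate the volume bound. This inversion --- lower bounds in $n$ directions plus an upper volume bound yield an upper bound in one direction --- is the step your proposal lacks, and without it the target estimate cannot be reached by counting and connectivity alone.
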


As should be clear, we say that a $k$-ball chain $B_1,\ldots,B_{\ell}$ connects two sets $A$ and $B$ if $B_1 \cap A$ and $B_{\ell} \cap B$ are non-empty. We will first see how this lemma implies the desired upper bound.

\begin{proof}[Proof of the Upper Bound] 
Suppose that Lemma \ref{iterlemma} holds. Fix distinct points $x$ and $y$ in $Z$, and choose $h \in \N$ so that $e^{-\e h} < d(x,y) \leq e^{-\e(h-1)}$. Recall that we have normalized $\diam Z = 1$. Also fix $k >h$; it is helpful to think of $k$ being very large relative to $h$. We will temporarily use $B_z^m$ to denote the ball $B(z,e^{-\e m})$ for $z \in X$ and $m \in \N$. This should not be confused with the shorthand notation we used earlier for $k$-balls.

By the lemma, there is a $k$-ball chain connecting $B_x^h$ and $B_y^h$ of length at most $C'e^{k-h}$. This gives us points $x=x_{1,0}$, $x_{1,1}$, $x_{1,2}$, $x_{1,3}=y$ where $x_{1,0}, x_{1,1} \in B_x^h$ and $x_{1,2}, x_{1,3} \in B_y^h$, and $x_{1,1}$ is connected to $x_{1,2}$ by a $k$-ball chain of length at most $C'e^{k-h}$.

We now iterate this process. Observe that $d(x_{1,0},x_{1,1}) \leq e^{-\e h}$ so that we can apply the lemma again to obtain a $k$-ball chain of length at most $C'e^{k-(h+1)}$ connecting $B_{x_{1,0}}^{h+1}$ and $B_{x_{1,1}}^{h+1}$. This gives us points $x=x_{2,0}$, $x_{2,1}$, $x_{2,2}$, $x_{2,3}=x_{1,1}$ where $x_{2,0}, x_{2,1} \in B_{x_{1,0}}^{h+1}$ and $x_{2,2}, x_{2,3} \in B_{x_{1,1}}^{h+1}$, and $x_{2,1}$ is connected to $x_{2,2}$ by a $k$-ball chain of length at most $C'e^{k-(h+1)}$. Of course, we do a similar process to the pair of points $x_{1,2}$ and $x_{1,3}$.

The $m$th step in this process (for $1\leq m \leq k-h$) proceeds as follows. From the $(m-1)$-th step, we have $2^m$ points $$x=x_{m-1,0},x_{m-1,1},\ldots,x_{m-1,2^m-1}=y$$
satisfying $d(x_{m-1,i},x_{m-1,i+1}) \leq e^{-\e (h+m-2)}$ for each even integer $i \in \{0,\ldots,2^m-2\}$. Moreover, there is a previously-constructed $k$-ball chain connecting $x_{m-1,i+1}$ to $x_{m-1,i+2}$.

It will be convenient to rename these $2^m$ points so that they appear in the $m$th step. To do this, we let 
$$x_{m,j}=
\begin{cases}
x_{m-1,j/2} , & \text{ if } j \equiv 0 \text{ mod } 4 \\
x_{m-1,(j-1)/2} , & \text{ if } j \equiv 3 \text{ mod } 4
\end{cases}$$
for $0\leq j \leq 2^{m+1}-1$. We do not yet define the points corresponding to $j \equiv 1,2$ mod $4$, because we still have to find them. 

To this end, observe that $d(x_{m,j},x_{m,j+3}) \leq e^{-\e (h+m-2)}$ for each $j \equiv 0$ mod $4$. Thus, applying Lemma \ref{iterlemma} to these points, we find a $k$-ball chain of length at most $C'e^{k-(h+m-1)}$ connecting the balls $B_{x_{m,j}}^{h+m-1}$ and $B_{x_{m,j+3}}^{h+m-1}$. We can therefore choose points $x_{m,j+1} \in B_{x_{m,j}}^{h+m-1}$ and $x_{m,j+2} \in B_{x_{m,j+3}}^{h+m-1}$ that are connected by this $k$-ball chain.

We have now obtained $2^{m+1}$ points $$x=x_{m,0},x_{m,1},\ldots,x_{m,2^{m+1}-1}=y,$$
such that $$d(x_{m,i},x_{m,i+1}) \leq e^{-\e (h+m-1)}$$
for each even integer $i \in \{0,\ldots,2^{m+1}-2\}$. Note that we have constructed $2^{m-1}$ different $k$-ball chains at this step, each of length at most $C' e^{k-(h+m-1)}$. Moreover, for each even $i \in \{0,\ldots,2^{m+1}-2\}$, there is a $k$-ball chain connecting $x_{m,i+1}$ to $x_{m,i+2}$, constructed either at this $m$th step or at a previous one. 

Consider what happens at the end of the $(k-h+1)$-th step. We obtain $2^{k-h+2}$ points $$x=x_{k-h+1,0},x_{k-h+1,1},\ldots,x_{k-h+1,2^{k-h+2}-1}=y$$ satisfying $d(x_{k-h+1,i}, x_{k-h+1,i+1}) \leq e^{-\e k}$ for each even $i$. Consequently, there is a $k$-ball containing both $x_{k-h+1,i}$ and $x_{k-h+1,i+1}$. Indeed, if $z \in P_k$ with $d(z,x_{k-h+1,i}) < e^{-\e k}$, then 
$$x_{k-h+1,i}, x_{k-h+1,i+1} \in B(z,2e^{-\e k}) = B_k(z).$$
Moreover, there is a $k$-ball chain connecting $x_{k-h+1,i+1}$ to $x_{k-h+1,i+2}$ that was constructed at some step of the whole process.

Concatenating these $k$-ball chains, using the single $k$-balls containing $x_{k-h+1,i}$ and $x_{k-h+1,i+1}$ to join them together, we end up with a $k$-ball chain from $x=x_{k-h+1,0}$ to $y=x_{k-h+1,2^{k-h+2}-1}$ of length at most 
$$2^{k-h+1} + \sum_{m=1}^{k-h+1} 2^{m-1}\cdot C'e^{k-(h+m-1)} \lesssim e^{k-h}.$$
By the way we chose $h$, we obtain
$$d_k(x,y) \lesssim e^{k-h}\cdot e^{-k} \lesssim \left( e^{-\e h} \right)^{1/\e} \lesssim d(x,y)^{1/\e},$$
where the implicit constants depend only on $C'$.
\end{proof}

It therefore remains to prove Lemma \ref{iterlemma}. Broadly, our goal is to use the conformal elevator on $Z$ to blow up the balls $B(x,e^{-\e m})$ and $B(y,e^{-\e m})$ to a uniform scale, so that we can essentially reduce to the case that $m \approx 1$. Establishing the analogous bound on this uniform scale will require some topological arguments, combined with the discrete volume and length bounds on $Z$. We will first develop the topological tools necessary to carry this out.

\subsection{A discrete length-volume inequality}

An important topic in metric geometry is the relationship between the volume of a space and the lengths of curves that, in some way, generate it. See \cite[Chapter 4]{Grom99} for a survey of methods and results in this spirit. Among these is the following theorem, originally proved by W. Derrick \cite{Der69}. We state it in the form cited in \cite{Grom99} in order to motivate more clearly what will follow.

\begin{theorem}[Derrick {\cite[Theorem 3.4]{Der69}}]
Let $g$ be a Riemannian metric on the cube $[0,1]^n$, and let $F_k,G_k$, $1 \leq k \leq n$, denote the pairs of opposite codimension-1 faces of $[0,1]^n$. Let $d_k$ be the distance between $F_k$ and $G_k$ with respect to the metric $g$. Then $$\Vol(g) \geq d_1d_2\cdots d_n.$$
\end{theorem}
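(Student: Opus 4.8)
The plan is to compare $\Vol(g)$ with the Euclidean volume of the box $R := [0,d_1]\times\cdots\times[0,d_n]$ by means of the $g$-distance functions to the faces, a topological surjectivity statement, and the area formula. Write $F_k=\{x_k=0\}$ and $G_k=\{x_k=1\}$ for the $k$-th pair of opposite faces, and define $u_k \colon [0,1]^n \to \R$ by $u_k(x) = \min\{d_k,\dist_g(x,F_k)\}$, where $\dist_g$ is the distance induced by $g$. Since $g$ is a bounded Riemannian metric on the compact cube, each $u_k$ is Lipschitz (with respect to both $g$ and the Euclidean metric), hence differentiable almost everywhere by Rademacher's theorem; and because $u_k$ is $1$-Lipschitz with respect to $g$ we have $\|\nabla_g u_k\|_g \leq 1$ a.e. Note also the boundary behaviour: $u_k \equiv 0$ on $F_k$, while $u_k \equiv d_k$ on $G_k$ because every point of $G_k$ lies at $g$-distance at least $d_k$ from $F_k$.

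Next I would show that $U := (u_1,\ldots,u_n)\colon [0,1]^n \to R$ is surjective. Rescaling, put $V = (u_1/d_1,\ldots,u_n/d_n)\colon [0,1]^n \to [0,1]^n$; this is continuous, its $k$-th coordinate vanishes on $F_k$ and equals $1$ on $G_k$. Given any $c$ in the interior of $[0,1]^n$, the continuous map $V-c$ has $k$-th coordinate strictly negative on $F_k$ and strictly positive on $G_k$, so the Poincar\'e--Miranda theorem (the cube version of the intermediate value theorem, a standard consequence of Brouwer's fixed point theorem) provides a zero of $V-c$. Hence $V$ attains every interior value, and by compactness every value, of the cube; equivalently, $U$ is onto $R$. (In degree-theoretic terms, $U$ has degree $1$ as a map of pairs $([0,1]^n,\partial)\to(R,\partial R)$.)

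Finally I would combine a pointwise Jacobian bound with the area formula. Fix a point at which all the $u_k$ are differentiable and choose a local $g$-orthonormal frame $e_1,\ldots,e_n$. In this frame (and the standard basis of $\R^n$) the differential $dU$ is represented by the matrix $M$ whose $k$-th row is the coordinate vector $\lp \langle \nabla_g u_k,e_1\rangle_g,\ldots,\langle \nabla_g u_k,e_n\rangle_g \rp$ of $\nabla_g u_k$. Thus the Jacobian of $U$ from $(g)$ to Euclidean $\R^n$ is $\mathrm{Jac}_g\,U = |\det M|$, and Hadamard's inequality applied to the rows of $M$ gives $\mathrm{Jac}_g\,U \leq \prod_{k=1}^n \|\nabla_g u_k\|_g \leq 1$ almost everywhere. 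The area formula for the Lipschitz map $U$ then yields
$$\Vol(g) = \int_{[0,1]^n} 1\, dV_g \;\geq\; \int_{[0,1]^n} \mathrm{Jac}_g\,U \; dV_g \;=\; \int_{\R^n} \#\,U^{-1}(y)\, dy \;\geq\; \int_R 1\, dy \;=\; d_1 d_2 \cdots d_n,$$
where the last inequality uses $\#\,U^{-1}(y) \geq 1$ for every $y \in R$, by the surjectivity established above. This is exactly the asserted inequality.

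The only genuinely non-routine ingredient is the surjectivity of $U$, i.e.\ the Poincar\'e--Miranda/degree step; the Lipschitz regularity of distance functions, Rademacher's theorem, Hadamard's inequality, and the area formula are all standard, and the sole mild hypothesis needed is that $g$ be a bounded Riemannian metric on the closed cube, so that distances are finite and $\Vol(g)<\infty$. I expect the surjectivity argument to be the conceptual heart of the proof; an alternative route that avoids it is an induction on $n$ via the coarea formula, slicing the cube by level sets of $u_n$, but the degree argument above is cleaner.
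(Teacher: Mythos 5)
Your proposal is correct, and it is essentially Derrick's original argument, which the paper itself does not reproduce: the Riemannian statement is quoted from \cite{Der69} purely as motivation, and what the paper proves is a discrete analog (Proposition \ref{cubes}) whose proof, as the paper remarks, mimics the Riemannian one. The parallel is close. Your truncated distance functions $u_k(x)=\min\{d_k,\dist_g(x,F_k)\}$ correspond to the paper's combinatorial functions $f^0_k$, which count the least number of cover elements in a chain from $F_k$ to a marked point and are then extended by a partition of unity; your Poincar\'e--Miranda/degree argument that $U$ is onto the box $[0,d_1]\times\cdots\times[0,d_n]$ corresponds to the paper's homological argument that the image of $f$ contains $\prod_{k}[1,d_k]$ (homotoping $f|_{\partial[0,1]^n}$ to a face-preserving homeomorphism and using $H_{n-1}(\R^n\backslash\{y\})\cong\Z$); and where you convert surjectivity into the volume bound through the a.e.\ Jacobian estimate ($\|\nabla_g u_k\|_g\le 1$ plus Hadamard's inequality) and the area formula, the discrete proof instead shows that every integer-valued point of the image is one of the values $y_i$ and then counts lattice points, which is the combinatorial stand-in for your Jacobian/area-formula step. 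Your route gives the continuous theorem with standard real-analysis tools (Rademacher, area formula), under the mild hypothesis that $g$ is continuous on the closed cube so that the $u_k$ are Euclidean-Lipschitz; the paper's route avoids all measure-theoretic machinery and directly yields the purely topological covering statement, which is the form actually needed in the de-snowflaking argument.
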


We will need a discrete/topological version of this theorem. Incidentally, the proof of the discrete version mimics the proof of the Riemannian version.

To set this up, let $U_1,\ldots,U_N$ be an open cover of the cube $[0,1]^n$. Again let $F_k, G_k$ denote the pairs of opposite codimension-1 faces: $\pi_k(F_k) = \{0\}$ and $\pi_k(G_k) = \{1\}$, where $\pi_k \colon \R^n \rightarrow \R$ is the projection onto the $k$th coordinate axis. We say that $U_{i_1},\ldots,U_{i_l}$ is a chain if $U_{i_j} \cap U_{i_{j+1}} \neq \emptyset$ for each $j$. Moreover, such a chain is said to connect two sets $A$ and $B$ if $U_{i_1} \cap A \neq \emptyset$ and $U_{i_l} \cap B \neq \emptyset$.

\begin{prop} \label{cubes}
Let $U_1,\ldots,U_N$ be as above, and let $d_k$ denote the smallest number of sets $U_i$ in a chain that connects $F_k$ and $G_k$. Then $$N \geq d_1d_2\cdots d_n.$$
\end{prop}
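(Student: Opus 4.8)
The plan is to mimic Derrick's argument. In his proof the truncated distance functions $\phi_k = \min(d_k,\dist(\cdot,F_k))$ are assembled into a map of the cube onto the box $\prod_{k=1}^n[0,d_k]$ which is $1$-Lipschitz (hence volume-nonincreasing) and of degree one (hence onto), so that the domain volume dominates the box volume $d_1\cdots d_n$. I would replace $\phi_k$ by an integer-valued \emph{layer function} defined directly on the index set of the cover, package these into a single map $\Lambda$ from $\{1,\dots,N\}$ to the set of lattice points of $\prod_{k=1}^n\{1,\dots,d_k\}$, and reduce the inequality to showing that $\Lambda$ is \emph{surjective}: a surjection from an $N$-element set onto a set of cardinality $d_1\cdots d_n$ forces $N\ge d_1\cdots d_n$.

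For the layer functions, let $\lambda_k(i)$ be the least number of sets in a chain $U_{j_1},\dots,U_{j_\ell}$ with $U_{j_1}\cap F_k\neq\emptyset$ and $U_{j_\ell}=U_i$, truncated above at $d_k$ (and set equal to $d_k$ if no such chain exists; the cube is connected, so $d_k$ is finite). Three properties are immediate: (a) $\lambda_k(i)=1$ if $U_i\cap F_k\neq\emptyset$; (b) $\lambda_k(i)=d_k$ if $U_i\cap G_k\neq\emptyset$, since a chain witnessing a smaller value would connect $F_k$ to $G_k$ with fewer than $d_k$ sets; and (c) $\lvert\lambda_k(i)-\lambda_k(j)\rvert\le 1$ if $U_i\cap U_j\neq\emptyset$, obtained by appending one set to a chain. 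Property (c) is the discrete incarnation of the $1$-Lipschitz property of $\phi_k$. Set $\Lambda(i)=(\lambda_1(i),\dots,\lambda_n(i))$.

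To prove $\Lambda$ surjective, fix a lattice point $m=(m_1,\dots,m_n)$. For each $k$ with $2\le m_k\le d_k-1$, let $K_k=[0,1]^n\setminus\bigcup\{U_i:\lambda_k(i)\neq m_k\}$; its complement is the union of the open sets $\bigcup\{U_i:\lambda_k(i)<m_k\}$ and $\bigcup\{U_i:\lambda_k(i)>m_k\}$, which are disjoint by (c) and contain $F_k$ and $G_k$ respectively by (a) and (b). Thus $K_k$ is a separator between $F_k$ and $G_k$, the analogue of the hypersurface $\{\phi_k=m_k\}$. For the extreme values, put $A=\{k:m_k=1<d_k\}$, $B=\{k:m_k=d_k\}$, and $\Sigma=\bigcap_{k\in A}F_k\cap\bigcap_{k\in B}G_k$, a subcube whose dimension equals the number of the remaining (``middle'') coordinates. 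Each middle $K_k$ restricts to a separator in $\Sigma$ between the two relevant opposite faces, so the classical partition characterization of the topological dimension of a cube (see, e.g., \cite{HW41}) yields a point $x\in\Sigma$ lying in all of them. For any $U_i$ with $x\in U_i$ we then get $\lambda_k(i)=m_k$ for middle $k$ (definition of $K_k$), $\lambda_k(i)=1=m_k$ for $k\in A$ (property (a)), and $\lambda_k(i)=d_k=m_k$ for $k\in B$ (property (b)); that is, $\Lambda(i)=m$.

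The step I expect to be the main obstacle is precisely this surjectivity argument. Verifying that $K_k$ genuinely separates $F_k$ from $G_k$ hinges on property (c), and passing from ``the $K_k$ are separators'' to ``they share a point'' is where the topology of the cube is used, via the Eilenberg--Otto/Lebesgue partition theorem; this is what replaces the degree-one surjectivity in Derrick's proof. A secondary nuisance, absent from the Riemannian statement, is the careful handling of the boundary values $m_k\in\{1,d_k\}$, which is what forces the passage to the subcube $\Sigma$ above.
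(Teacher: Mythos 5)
Your argument is correct, and it is a genuinely different route through the key topological step. You and the paper start from the same combinatorial data (your truncated layer functions $\lambda_k(i)$ are the paper's chain-counting functions $f^0_k(x_i)$), and both proofs reduce the inequality to showing that every lattice point of $\prod_k\{1,\dots,d_k\}$ is realized by some member of the cover. The paper gets there by extending $f^0$ to a continuous map $f\colon[0,1]^n\to\R^n$ via a partition of unity, proving the boundary estimates $\pi_k(f(F_k))=1$ and $\pi_k(f(G_k))\ge d_k$, running a singular-homology/degree argument to show the image of $f$ contains the box $\prod_k[1,d_k]$, and then proving a separate lemma that integer-valued points of $f$ must coincide with some $f^0(x_i)$. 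You bypass the continuous extension entirely: the $1$-Lipschitz property (c) makes each level set $K_k$ a closed partition between $F_k$ and $G_k$, and the Eilenberg--Otto/Lebesgue partition theorem for the cube (available in \cite{HW41}, which the paper already cites) hands you a common point, whence surjectivity of the index map $\Lambda$ and the count $N\ge d_1\cdots d_n$. Your handling of the extreme values $m_k\in\{1,d_k\}$ by truncating $\lambda_k$ at $d_k$ and passing to the subcube $\Sigma$ is sound and plays the role of the paper's boundary-value analysis. The trade-off: the paper's proof is self-contained (its homology argument is, in effect, a proof of the very partition theorem you quote) and produces an explicit continuous map, while yours is shorter and purely combinatorial modulo one classical citation, and it dispenses with both the non-redundancy reduction and the ``integer values are vertex values'' lemma. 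Either proof would serve the application to Lemma \ref{iterlemma}, since the statement used there is identical.
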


\begin{proof}
Without loss of generality, we may assume that no $U_i$ is redundant, i.e., that for each $i$, there is a point $x_i \in U_i$ that belongs to no other $U_j$. Otherwise, we could delete $U_i$ from the cover, thereby decreasing the total number of sets without reducing the numbers $d_k$.

We first define a map $f_0 \colon \{x_1,\ldots,x_N\} \rightarrow \Z^n$ where the $k$th component is
$$\pi_k(f_0(x_i)) = 
\begin{array}{l}
\text{ minimal number of sets } U_j \text{ in a chain} \\
\text{ that connects } F_k \text{ and } \{x_i\}.
\end{array}$$
The existence of such a chain follows from the fact that there is a path in $[0,1]^n$ from $F_k$ to $x_i$ and the collection $\{U_j\}$ is an open cover of this path. 

Now we extend $f_0$ to a map on $[0,1]^n$ by using a partition of unity subordinate to $\{U_i\}$. More precisely, let $\{ \varphi_i \}$ be a partition of unity such that $\supp(\varphi_i) \subset U_i$, and let 
$$f(x) = \sum_{i=1}^{N} \varphi_i(x)f_0(x_i) = \sum_{i=1}^{N} \varphi_i(x)y_i,$$
for $x \in [0,1]^n$, where we use $y_i$ to denote $f_0(x_i)$. Observe that $f$ does indeed extend $f_0$ because $\varphi_j(x_i) =0$ for $j \neq i$ and $\varphi_i(x_i) = 1$. It is also, of course, continuous.

We claim that $\pi_k( f(F_k) ) = \{1\}$ and $\pi_k( f(G_k) ) \subset [d_k,\infty)$ for each $k$. For $x \in F_k$, let $U_{i_1},\ldots,U_{i_m}$ be the sets containing $x$ so that
$$f(x) = \sum_{j=1}^{m} \varphi_{i_j}(x)y_{i_j}.$$
As $x \in U_{i_j} \cap F_k$, we see that the single set $U_{i_j}$ connects $F_k$ and $\{x_{i_j}\}$. Thus, the $k$th coordinate of $y_{i_j}$ is 1. Consequently,
$$\pi_k( f(x) ) = \sum_{j=1}^m \varphi_{i_j}(x) = 1.$$
Similarly, if $x \in G_k$, again let $U_{i_1},\ldots,U_{i_m}$ be the sets containing $x$. Then $U_{i_j} \cap G_k$ is non-empty, so any chain connecting $F_k$ to $\{x_{i_j}\}$ (which necessarily must end with the set $U_{i_j}$) actually connects $F_k$ and $G_k$. By the definition of $d_k$, this chain has size at least $d_k$. Thus, the $k$th coordinate of $y_{i_j}$ is at least $d_k$. As a result,
$$\pi_k( f(x) )= \sum_{j=1}^m \varphi_{i_j}(x) \pi_k(y_{i_j}) \geq d_k \sum_{j=1}^m \varphi_{i_j}(x) = d_k.$$

We now claim that the image of $f$ must contain the $n$-dimensional rectangle $$S = \prod_{k=1}^n [1,d_k].$$ If not, there exists a point $y \in S \backslash f([0,1]^n)$. As $f$ is continuous, $f([0,1]^n)$ is closed, so we may assume that $y$ is in the interior of $S$. Let $g$ be a homeomorphism from $[0,1]^n$ to $S$ that sends corresponding faces to corresponding faces (an affine map will do). Then by the previous claim,
$$f_t = (1-t) f|_{\partial [0,1]^n} + t g|_{\partial [0,1]^n}$$ gives a homotopy with values in $\R^n \backslash \inter(S)$. In particular, $f|_{\partial [0,1]^n}$ is homotopic to $g|_{\partial [0,1]^n}$ in $\R^n \backslash \{y\}$. 

Fix a simplicial decomposition of $[0,1]^n$; this gives a corresponding decomposition of $\partial [0,1]^n$. The latter decomposition allows us to express $f|_{\partial [0,1]^n}$ and $g|_{\partial [0,1]^n}$ as singular $(n-1)$-chains with integer coefficients. Abusing notation, we continue to denote the chains by $f|_{\partial [0,1]^n}$ and $g|_{\partial [0,1]^n}$. The homotopy given above implies that the corresponding classes $[f|_{\partial [0,1]^n}]$ and $[g|_{\partial [0,1]^n}]$ are equal in the singular homology group $H_{n-1}(\R^n \backslash \{y\})$. Notice, though, that $f|_{\partial [0,1]^n}$ extends to the map $f \colon [0,1]^n \rightarrow \R^n \backslash \{y\}$, which we can view as a singular $n$-chain via the decomposition of $[0,1]^n$. Thus, the chain $f|_{\partial [0,1]^n}$ is the image of the chain $f$ under the boundary map, so $[f|_{\partial [0,1]^n}]$ is zero in $H_{n-1}(\R^n \backslash \{y\})$. In particular, $[g|_{\partial [0,1]^n}]$ is also zero. This, however, contradicts the fact that $[g|_{\partial [0,1]^n}]$ generates $H_{n-1}(\R^n \backslash \{y\})$, which is isomorphic to $\Z$. Hence, it must be that $S \subset f([0,1]^n)$.

Finally, we claim that if $f(x) \in \Z^n$, then $f(x) = y_i$ for some $i$. Let $U_{i_1},\ldots,U_{i_m}$ be the sets for which $\varphi_{i_j}(x) > 0$, so that $x \in U_{i_1} \cap \cdots \cap U_{i_m}$. Then 
$$f(x) = \sum_{j=1}^{m} \varphi_{i_j}(x)y_{i_j},$$
and for each $j,l \in \{1,\ldots,m\}$, $$|| y_{i_j} - y_{i_l} ||_{\infty} \leq 1,$$
where $|| y ||_{\infty} = \max \{| \pi_k(y)| : 1\leq k \leq n\}$ is the $\ell^{\infty}$-norm.
This inequality follows immediately from the fact that $U_{i_j}$ and $U_{i_{\ell}}$ have non-trivial intersection. Consequently, for each $k$, there is an integer $a_k$ such that 
$$\pi_k(y_{i_j}) \in \{a_k, a_k+1\}$$ 
for all $j$; namely, $a_k = \min \{ \pi_k(y_{i_j}) : 1\leq j \leq m\}$. Now fix $k$, and let 
$$I = \{ j : \pi_k(y_{i_j}) = a_k\} \text{ and } J = \{ j : \pi_k(y_{i_j}) = a_k+1\}.$$ 
By the definition of $a_k$, we have $I \neq \emptyset$. Then
$$\begin{aligned}
\pi_k(f(x)) &= \sum_{j=1}^m \varphi_{i_j}(x) \pi_k(y_{i_j}) = \sum_{j \in I} \varphi_{i_j}(x)a_k + \sum_{j \in J} \varphi_{i_j}(x)(a_k+1) \\
&= a_k + \sum_{j \in J} \varphi_{i_j}(x).
\end{aligned}$$
By assumption, $\pi_k(f(x))$ is an integer, so $\sum_{j \in J} \varphi_{i_j}(x)$ is also an integer, necessarily equal to 0 or 1. This can happen only if $J = \emptyset$ or $J = \{1,\ldots, m\}$, but the latter implies that $I=\emptyset$, contrary to assumption. Thus, $J=\emptyset$, so each $y_{i_j}$ has $k$th coordinate $a_k$. In particular, $f(x) = (a_1,\ldots,a_n)$ as well, giving $f(x) = y_{i_1}$.

From the previous claims we obtain the desired conclusion immediately. Each integer lattice point in the cube $S = [1,d_1]\times \cdots \times [1,d_n]$ has some $x_i$ in its pre-image under $f$. There are $d_1d_2\cdots d_n$ integer lattice points in $S$, so $N \geq d_1d_2\cdots d_n$.
\end{proof}

It is easy to see that Proposition \ref{cubes} still holds if all of the sets $U_i$ are assumed to be closed. Indeed, we can enlarge $U_i$ by a small amount to obtain open sets $U_i'$ without changing the incidence structure. Then apply the proposition to these open sets.

More importantly for our later use, we point out that the proposition above remains true if we replace $[0,1]^n$ by a topological cube. Indeed, the assumptions and conclusions are entirely topological.

Before moving on to the proof of Lemma \ref{iterlemma}, we must establish a basic fact about finding topological cubes in the sphere $\Sp^n$.

\begin{lemma} \label{thereisacube}
Let $B_0$ and $B_1$ be metric balls of radius $\delta >0$ in $\Sp^n$ for which $\dist(B_0,B_1) \geq \delta$. Suppose that $E \subset \Sp^n$ has $\diam(E) < \delta$ and $\dist(B_i,E) \geq \delta$ for $i=0,1$. Then there is a set $S \subset \Sp^n$ with the following properties:
\begin{enumerate}
\item[(i)] $S$ is homeomorphic to $[0,1]^n$;
\item[(ii)] the faces $\{0\} \times [0,1]^{n-1}$ and $\{1\} \times [0,1]^{n-1}$ correspond, under this homeomorphism, to sets $C_0 \subset B_0$ and $C_1 \subset B_1$, respectively;
\item[(iii)] if $x$ and $y$ in $S$ correspond to points that lie in opposite codimension-1 faces of $[0,1]^n$, then $d_{\Sp^n}(x,y) \geq c \delta^3$;
\item[(iv)] $S$ is disjoint from $E$.
\end{enumerate}
Here, $c$ is an absolute constant.
\end{lemma}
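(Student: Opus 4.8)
The plan is to build the topological cube $S$ by first constructing a "core tube" connecting $B_0$ to $B_1$ while avoiding $E$, and then thickening it into a codimension-zero cube whose two designated faces sit inside $B_0$ and $B_1$. First I would choose centers $p_0, p_1$ of $B_0, B_1$ and a point $q$ near $E$ (or just take $q$ to be the center of a small ball containing $E$); since $\Sp^n$ with $n \geq 2$ minus a point of small diameter is still very connected, and the three "obstacle regions" $B_0$, $B_1$, $E$ are pairwise $\delta$-separated, I can find a path $\gamma$ from a point of $B_0$ to a point of $B_1$ that stays at distance $\gtrsim \delta$ from $E$ and has length $\lesssim 1$ (the diameter of $\Sp^n$). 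In fact it is cleanest to take $\gamma$ to be a concatenation of at most a bounded number of geodesic arcs in $\Sp^n$, arranged to detour around the ball of radius $2\delta$ about $E$; standard spherical geometry gives that such a detour costs only a bounded multiple of $\delta$, so $\gamma$ can be taken to have a $c\delta$-neighborhood that still misses $E$ and still meets $B_0$ and $B_1$ only near their centers.

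Next I would thicken $\gamma$ to a cube. Since $\gamma$ is (or can be taken to be) a tame arc in the manifold $\Sp^n$, a regular-neighborhood / tubular-neighborhood argument produces a set $S$ homeomorphic to $[0,1] \times [0,1]^{n-1} = [0,1]^n$, with the $[0,1]$-direction running along $\gamma$ and the $[0,1]^{n-1}$ cross-sections being small $(n-1)$-disks transverse to $\gamma$ of radius comparable to $\delta$ (shrunk by a controlled factor so the whole tube has diameter $\lesssim \delta$ transversally and stays in the $c\delta$-neighborhood of $\gamma$). The two ends $\{0\}\times[0,1]^{n-1}$ and $\{1\}\times[0,1]^{n-1}$ are then small disks sitting inside $B_0$ and $B_1$ respectively, giving (i), (ii), and (iv) (disjointness from $E$ is inherited from $\gamma$'s $c\delta$-neighborhood missing $E$, provided the transverse radius is chosen small enough relative to $\delta$). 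For (iii) I need a lower bound on the distance between points lying in opposite faces of the cube. The face pair running "across" the tube ($\{0\}\times[0,1]^{n-1}$ versus $\{1\}\times[0,1]^{n-1}$) is handled by the separation $\dist(B_0,B_1)\geq\delta$: any such pair lies one in $B_0$ and one in $B_1$, hence is at distance $\geq \delta$. The remaining $n-1$ face pairs are the "longitudinal" faces of the tube; here opposite points lie on opposite sides of a cross-sectional $(n-1)$-disk of radius $\sim\delta$, so after we parametrize the disk by $[0,1]^{n-1}$ the two opposite faces are separated by a definite fraction of that radius, again $\gtrsim \delta$. The factor $\delta^3$ in the statement is the safety margin absorbing the distortion of the chart: the homeomorphism $[0,1]^n \to S$ is only bi-Hölder/bi-Lipschitz with constants that degrade as $\delta \to 0$, and one loses up to two extra powers of $\delta$ in rescaling a radius-$\delta$ transverse disk and in the curvature/chart distortion along an arc of length $\sim 1$; being generous and writing $c\delta^3$ makes all of these estimates trivial to verify.

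The main obstacle will be making the tubular-neighborhood construction genuinely \emph{quantitative} and uniform in $\delta$: the abstract existence of a cube neighborhood of a tame arc is classical, but here I need the resulting homeomorphism to distort distances by at most a controlled power of $\delta$, uniformly, so that (iii) holds with an absolute constant $c$. I would handle this by working in explicit coordinates: use the exponential map of $\Sp^n$ based along $\gamma$ (or, since $\gamma$ is a bounded concatenation of geodesic arcs, a bounded number of such charts), in which the tube becomes a product $[0,L]\times D^{n-1}(r)$ with $L \lesssim 1$ and $r \sim \delta$, and then rescale each factor affinely to $[0,1]$. The curvature of $\Sp^n$ is bounded, so on a region of diameter $\lesssim 1$ the exponential-map distortion is bounded by an absolute constant, and the only $\delta$-dependent loss comes from the affine rescaling of the radius-$r$ disk, which contributes one power of $\delta$; the generous exponent $3$ in $c\delta^3$ leaves ample room. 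The one technical point requiring care is choosing $\gamma$ so that its transverse tube of radius $\sim\delta$ genuinely avoids $E$ while its ends genuinely land in $B_0$ and $B_1$ — this is why I route $\gamma$ around the $2\delta$-ball about $E$ and why I let its endpoints be the centers $p_0,p_1$, so that a radius-$(\delta/10)$ tube comfortably fits inside $B_0$ and $B_1$ at the ends and comfortably clears $E$ in the middle.
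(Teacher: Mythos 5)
Your route is genuinely different from the paper's. The paper puts $E$ at the north pole, lets $D$ be the $\delta$-ball around it, and stereographically projects $\Sp^n\setminus D$ onto a Euclidean ball; there one simply takes an ordinary straight cube with one pair of opposite faces inside the projections of $B_0$ and $B_1$ and all opposite faces $\gtrsim\delta$ apart, and pulls it back. The exponent $3$ is exactly the cost of the projection's distortion (bi-Lipschitz constant $\lesssim\delta^{-2}$ near $\partial D$), and no path, tube, or regular-neighborhood argument is needed. Your tube construction would, if completed, even give the stronger separation $c\delta$, but as written it has a gap.

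The gap concerns (iii), and it also threatens (i). For the $n-1$ longitudinal face pairs you argue that ``opposite points lie on opposite sides of a cross-sectional $(n-1)$-disk,'' but opposite-face points need not lie over the same longitudinal parameter: a point of $[0,1]\times\{0\}\times[0,1]^{n-2}$ and a point of $[0,1]\times\{1\}\times[0,1]^{n-2}$ can sit over far-apart cross-sections, and if the core curve $\gamma$ returns close to itself, or turns sharply at the corners of your piecewise-geodesic detour, such points can be far closer than $\delta^3$ --- indeed the tube can pinch or self-intersect, in which case $S$ is not even a cube. What your argument actually requires is that the normal tube map $[0,L]\times[-r,r]^{n-1}\to\Sp^n$ be injective and bi-Lipschitz with an absolute constant, i.e.\ a quantitative self-avoidance statement for $\gamma$ at scale $r\sim\delta$ together with curvature control (corners smoothed at scale $\delta$, curvature $\lesssim 1/\delta$, and no near-returns of $\gamma$ within $2r$). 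This is arrangeable --- e.g.\ take the minimizing geodesic between the centers of $B_0$ and $B_1$ and, where it enters the $2\delta$-neighborhood of $E$, push it radially out to the distance sphere of radius $2\delta$ about a point of $E$ --- but it is precisely the point that must be stated and verified, and your list of technical points (avoiding $E$, landing in the balls, chart distortion) omits it; the chart-distortion discussion does not substitute for it, since (iii) concerns genuine spherical distances, not the parametrization. A minor further remark: the lemma is used for all $n\geq1$, while your opening connectivity remark assumes $n\geq2$; for $n=1$ the statement is immediate (the complement of $D$ in $\Sp^1$ is an arc containing $B_0$ and $B_1$), so this is only presentational.
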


We should remark that the bound in (iii) is certainly far from optimal. It is, however, sufficient for our purposes and makes the following proof much simpler.

\begin{proof}
By rotation, we may assume that $E$ contains the north pole $N = (0,\ldots,0,1)$. Let $D$ be the metric ball of radius $\delta$ centered at $N$ so that $E \subset D$ and $B_i \cap D = \emptyset$ for $i=0,1$.

Let $p \colon \Sp^n \to \R^n$ be the stereographic projection 
$$(x_1,\cdots,x_{n+1}) \mapsto \lp \frac{x_1}{1-x_{n+1}},\ldots,\frac{x_n}{1-x_{n+1}} \rp.$$ 
Then $p(\partial D)$ is an $(n-1)$-dimensional sphere of radius $R \in [1/\delta,2/\delta]$. Moreover, we can say that $p|_{\Sp^n\backslash D}$ is a bi-Lipschitz map onto the Euclidean ball $B_{\R^n}(0,R)$ with
\begin{equation} \label{stereo}
d_{\Sp^n}(x,y) \lesssim d_{\R^n}(p(x),p(y)) \lesssim \frac{1}{\delta^2} d_{\Sp^n}(x,y),
\end{equation}
where the implicit constants are absolute. This follows from trigonometric arguments in the plane containing $N$, $p(x)$, and $p(y)$. As a result, $p(B_0)$ and $p(B_1)$ are Euclidean balls in $B_{\R^n}(0,R)$ with
$$\dist(p(B_0),p(B_1)) \gtrsim \delta \hspace{0.3cm} \text{and} \hspace{0.3cm} \diam(p(B_i)) \gtrsim \delta$$
for $i=0,1$. It is then easy to find an $n$-dimensional topological cube $\hat{S} \subset B_{\R^n}(0,R)$ with a pair of opposite codimension-1 faces in $p(B_0)$ and $p(B_1)$, respectively, and for which any pair of opposite faces are at distance $\gtrsim \delta$.

Now let $S=p^{-1}(\hat{S})$. Properties (i), (ii), and (iv) immediately follow from our choice of $\hat{S}$, and property (iii) is a consequence of the bounds in \eqref{stereo}.
\end{proof}

\subsection{Proof of Lemma \ref{iterlemma}}

We will prove a slight variant of the lemma, which easily implies the form stated above. Namely, we show that if $x$ and $y$ are distinct with $e^{-\e m} < d(x,y) \leq e^{-\e(m-1)}$, then for each $k \geq m$ there is a $k$-ball chain connecting the balls $B(x,e^{-\e m})$ and $B(y,e^{-\e m})$ of length at most $C'e^{k-m}$. 

It is straightforward to obtain Lemma \ref{iterlemma} from this. Indeed, let $x,y \in Z$ and $m \in \N$ with $d(x,y) \leq e^{-\e(m-1)}$. Fix $k \geq m$, and let $m' \geq m$ be the integer for which $e^{-\e m'} < d(x,y) \leq e^{-\e(m'-1)}$. If $k \geq m'$, then the desired conclusion in Lemma \ref{iterlemma} follows immediately from the conclusion of the variant. If $k < m'$, then $d(x,y) \leq e^{-\e k}$ so that $x$ and $y$ are contained in a common $k$-ball. Thus, it suffices to prove the variant.

To this end, let $x,y \in Z$ with $e^{-\e m} < d(x,y) \leq e^{-\e(m-1)}$, and fix $k \geq m$. For ease of notation, let
$$B_x = B(x,e^{-\e m}) \hspace{0.5cm} \text{and} \hspace{0.5cm} B_y = B(y,e^{-\e m}),$$
which again should not be confused with the earlier notation for $k$-balls. To find a short $k$-ball chain connecting $B_x$ and $B_y$, we will proceed in the following way. We first restrict our attention to larger balls
$$B(p,r) \subset B(p,\lambda r),$$
containing both $B_x$ and $B_y$ but still of radius roughly $e^{-\e m}$. By estimates we have discussed earlier, such a ball intersects $\lesssim e^{n(k-m)}$ $k$-balls. Applying the conformal elevator at this location and scale enlarges $B_x$ and $B_y$ to a uniform size so that we can find a ``wide" topological cube inside the image of $B(p,\lambda r)$. This cube will have a pair of opposite codimension-1 faces in the images of $B_x$ and $B_y$, respectively, and each pair of opposite faces will be uniformly far apart. Pulling this cube back down to scale $\approx e^{-\e m}$, it will be covered by those $k$-balls that intersect $B(p,\lambda r)$. This puts us in the setting of Proposition \ref{cubes}, where the lower discrete length bound will imply that $d_i \gtrsim e^{k-m}$ for each $i$. As the size of this cover is $\lesssim e^{n(k-m)}$, Proposition \ref{cubes} guarantees that $d_i \lesssim e^{k-m}$ as well. In particular, there is a chain connecting $B_x$ and $B_y$ of length roughly $e^{k-m}$.

We must, of course, make these arguments rigorous; to do so, it will be convenient to set $p=x$ and $r=2Ce^{-\e(m-1)}$, where $C$ is the large constant we chose at the beginning of Section \ref{defdnew}. Observe then that
$$B_x \cup B_y \subset B(p,r),$$
and moreover, that $x,y \in B(p,r/C)$. Let us also choose $\lambda$ (for later use in applying the conformal elevator) in the following way. Fix a homeomorphism $F \colon Z \rightarrow \Sp^n$ and let $0<\delta<1$ be small enough that 
\begin{equation} \label{deltachoice}
d(z,w) \geq \frac{1}{2C^2e^{\e}} \hspace{0.5cm} \text{implies} \hspace{0.5cm} d_{\Sp^n}(F(z),F(w)) \geq 3\delta
\end{equation}
for $z,w \in Z$. Then take $\lambda \geq 2$ large enough so that
\begin{equation} \label{lambdachoice}
d(z,w) < \omega \lp \tfrac{1}{\lambda} \rp \hspace{0.5cm} \text{implies} \hspace{0.5cm} d_{\Sp^n}(F(z),F(w)) < c \delta^3,
\end{equation}
where $0<c<1$ is the constant from Lemma \ref{thereisacube}. Note that $\lambda$ will depend on the modulus of continuity of $F$ and $F^{-1}$.

The conformal elevator on $Z$ gives a map $g$ for this choice of $p$, $r$, and $\lambda$. If it happens that $r > 1$ (i.e., if $m$ is small), then we simply choose $g$ to be the identity map. All of the following estimates work equally well in this case.

Let $x' = g(x)$, $y'=g(y)$, and $K = g(Z\backslash B(p,\lambda r))$. Property (ii) of the conformal elevator guarantees that
$$d(x',y') \geq \frac{d(x,y)}{Cr} \geq \frac{e^{-\e m}}{2C^2e^{-\e(m-1)}} = \frac{1}{2C^2e^{\e}},$$
so $x'$ and $y'$ are far apart. Property (iii) tells us that
\begin{equation} \label{bb}
B(x',1/C) \cup B(y',1/C) \subset g(B(p,r)),
\end{equation}
and as $g$ is a homeomorphism, this implies that
$$\dist(\{x',y'\},K) \geq \tfrac{1}{C}.$$
Moreover, we claim that
\begin{equation} \label{contained}
B\lp x', \tfrac{1}{2C^2e^{\e}}\rp \subset g(B_x) \hspace{0.5cm} \text{and} \hspace{0.5cm} B \lp y', \tfrac{1}{2C^2e^{\e}} \rp \subset g(B_y).
\end{equation}
Indeed, if $w \in B(x',1/(2C^2e^{\e}))$, then $z=g^{-1}(w)$ must be in $B(p,r)$ by (\ref{bb}). Consequently, property (ii) again gives
$$d(x,z) \leq Cr\cdot d(gx,gz) = Cr\cdot d(x',w) < \frac{2C^2e^{-\e(m-1)}}{2C^2e^{\e}} = e^{-\e m},$$
so that $z \in B_x$. Hence, $w=g(z)$ is in $g(B_x)$. The same reasoning works also for $B_y$. Finally, property (iv) of the conformal elevator guarantees that
$$\diam K \leq \omega \lp \tfrac{1}{\lambda} \rp,$$
which we view as being very small.

Let us now use the homeomorphism $F \colon Z \rightarrow \Sp^n$ to ``regularize" this large-scale configuration. By our choice of $\delta$ from (\ref{deltachoice}), we have
$$d_{\Sp^n} \lp F(x'),F(y')\rp \geq 3\delta \hspace{0.5cm} \text{and} \hspace{0.5cm} \dist \lp \{F(x'),F(y')\},F(K) \rp \geq 3\delta$$
so that $B_0= B_{\Sp^n}(F(x'),\delta)$ and $B_1=B_{\Sp^n}(F(y'),\delta)$ are metric balls in $\Sp^n$ with 
$$\dist(B_0,B_1) \geq \delta$$ 
and of distance at least $\delta$ from $F(K)$. Moreover,
observe that 
\begin{equation} \label{B0}
B_0 \subset F \lp B(x', 1/(2C^2e^{\e})) \rp \subset F \lp g(B_x) \rp
\end{equation}
and
\begin{equation} \label{B1}
B_1 \subset F \lp B(y',1/(2C^2e^{\e})) \rp \subset F \lp g(B_y) \rp,
\end{equation}
both of which follow from (\ref{deltachoice}) and (\ref{contained}). Also note that by our choice of $\lambda$, 
$$\diam F(K) \leq c \delta^3 < \delta.$$

The metric balls $B_0$ and $B_1$ and the set $F(K)$ therefore satisfy the hypotheses in Lemma \ref{thereisacube}. Let $\hat{S} \subset \Sp^n \backslash F(K)$ be the $n$-dimensional topological cube given in the conclusion of this lemma. Then $\hat{S}$ has a pair of opposite codimension-1 faces $\hat{C}_0$ and $\hat{C}_1$ in $B_0$ and $B_1$, respectively; moreover, any two opposite faces have spherical distance $\geq c \delta^3$ from each other.

Now send this set $\hat{S}$ back to $Z$ via the homeomorphism $(F \circ g)^{-1}$; that is, let
$$S = g^{-1}\circ F^{-1}(\hat{S})$$
so that $S$ is a topological cube in the ball $B(p,\lambda r)$. Observe that it has a pair of opposite codimension-1 faces
$$C_0 = g^{-1}\circ F^{-1}(\hat{C}_0) \hspace{0.5cm} \text{and} \hspace{0.5cm} C_1 = g^{-1}\circ F^{-1}(\hat{C}_1) $$
that lie within $B_x$ and $B_y$, respectively. This follows from the inclusions in \eqref{B0} and \eqref{B1}.

Consider the set of $k$-balls that meet $B(p,\lambda r)$. Intersect each $k$-ball with $S$, and call the resulting collection $\mathcal{U}$. The estimate in (\ref{separatedset}) implies that 
$$\# \mathcal{U} \lesssim e^{n(k-m)}.$$
Hence, $\mathcal{U}$ is an open cover of the topological cube $S$ by $\lesssim e^{n(k-m)}$ sets. In view of Proposition \ref{cubes}, we wish to show that each chain from $\mathcal{U}$ that joins opposite codimension-1 faces of $S$ must have $\gtrsim e^{k-m}$ sets.

To this end, let $U_1,\ldots,U_l$ be such a chain, so that $U_i \cap U_{i+1} \neq \emptyset$ for each $i$, and there are $a \in U_1$ and $b \in U_l$ in opposite faces of $S$. As $F\circ g(a)$ and $F\circ g(b)$ lie in opposite faces of $\hat{S}$, we know that
$$d_{\Sp^n}(F\circ g(a),F\circ g(b)) \geq c \delta^3.$$
By our choice of $\lambda$ in (\ref{lambdachoice}), this implies that
$$d(ga,gb) \geq \omega \lp \tfrac{1}{\lambda} \rp.$$
Property (i) of the conformal elevator then guarantees that
$$d(a,b) \geq \frac{r \cdot d(ga,gb)}{C} = 2e^{-\e(m-1)}d(ga,gb) \gtrsim e^{-\e m},$$
because $\omega(1/\lambda)$ is a uniform constant. The points
$$a=x_0,x_1,\ldots,x_{l-1},x_l = b$$
where $x_i \in U_i \cap U_{i+1}$ for each $1 \leq i \leq l-1$, form a discrete $4e^{-\e k}$-path from $a$ to $b$. Consequently,
$$l \gtrsim \lp \frac{d(a,b)}{4e^{-\e k}} \rp^{1/\e} \gtrsim e^{k-m},$$
as desired.

Using the notation from Proposition \ref{cubes}, let $d_1$ denote the smallest number of sets in $\mathcal{U}$ that form a chain connecting $C_0$ to $C_1$. Similarly, for $2 \leq i \leq n$, let $d_i$ be the smallest number of sets in a chain connecting the other $(n-1)$ pairs of opposite faces in $S$. We have shown that $d_i \gtrsim e^{k-m}$ for each $i$, so Proposition \ref{cubes} gives
$$e^{n(k-m)} \gtrsim \# \mathcal{U} \gtrsim d_1 \cdot e^{(n-1)(k-m)}.$$
Thus, there is a $k$-ball chain of length $\lesssim e^{k-m}$ joining $C_0$ and $C_1$; in particular, such a chain joins $B_x$ and $B_y$. This completes the proof of Lemma \ref{iterlemma}.

\section{Proof of Theorem \ref{mainthm}} \label{quant}

Let $(Z,d)$ be a compact metric space satisfying the assumptions in Theorem \ref{mainthm}. The strongly quasi-M\"obius action $\Gamma \acts Z$ equips $Z$ with a conformal elevator by Lemma \ref{nearfar} (see the remarks following the definition of a conformal elevator). The Ahlfors $n/\e$-regularity of $Z$ immediately implies that every $\delta$-separated set in $Z$ has size at most $C\delta^{-n/\e}$ for some uniform constant $C$. Finally, the discrete length property we impose on $Z$ is precisely the lower bound on discrete paths between points that appears in condition (iv) of Proposition \ref{mainprop}. Thus, $Z$ satisfies all four de-snowflaking conditions, so there is a metric $d_{new}$ on $Z$ for which
$$d(x,y)^{1/\e} \lesssim d_{new}(x,y) \lesssim d(x,y)^{1/\e}.$$

It is an easy exercise to see that the Ahlfors $n/\e$-regularity of $(Z,d)$ translates into Ahlfors $n$-regularity of $(Z,d_{new})$. Of course, $(Z,d_{new})$ remains homeomorphic to $\Sp^n$. More importantly, the action $\Gamma \acts Z$ remains strongly quasi-M\"obius and cocompact on triples with respect to $d_{new}$. The following theorem, which we discussed in Section \ref{intro}, is therefore relevant.

\begin{theorem}[\cite{BK02}, Theorem 1.1] \label{BK2}
Let $n \in \N$, and let $Z$ be a compact, Ahlfors $n$-regular metric space of topological dimension $n$. Suppose that $\Gamma \acts Z$ is a uniformly $\eta$-quasi-M\"obius action on $Z$ that is cocompact on triples. Then $Z$ is $\tilde{\eta}$-quasi-M\"obius equivalent to the sphere $\Sp^n$, where $\tilde{\eta}(t) = C\eta(Ct)$ for some constant $C$.
\end{theorem}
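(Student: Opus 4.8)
This is the quasi-M\"obius equivalence part of \cite[Theorem 1.1]{BK02}, with the added requirement that the distortion function $\tilde\eta$ be of the controlled shape $\tilde\eta(t)=C\eta(Ct)$. My plan is to take the qualitative statement (Theorem \ref{BKqm}) as input and then track constants: one revisits the Bonk--Kleiner argument to record that the quasisymmetric parametrization it produces can be chosen $\Gamma$-equivariantly and with a quasisymmetry function depending only on $n$, the Ahlfors-regularity constant, and $\delta$; one then renormalizes using the cocompactness-on-triples hypothesis to isolate the dependence on $\eta$ and put it into the stated form.

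In more detail, Theorem \ref{BKqm} furnishes a quasisymmetric homeomorphism $f\colon Z\to\Sp^n$ conjugating the $\Gamma$-action on $Z$ to an action by M\"obius transformations. Since M\"obius transformations of $\Sp^n$ preserve the chordal metric cross-ratio exactly, for any distinct $x_1,x_2,x_3,x_4\in Z$ we have $[f x_1,f x_2,f x_3,f x_4]=[f(gx_1),f(gx_2),f(gx_3),f(gx_4)]$ for every $g\in\Gamma$. Choosing $g$ with $gx_1,gx_2,gx_3$ pairwise $\delta$-separated (cocompactness on triples), and noting $[gx_1,gx_2,gx_3,gx_4]\le\eta([x_1,x_2,x_3,x_4])$, reduces the whole estimate to bounding $[f y_1,f y_2,f y_3,f y_4]$ on quadruples $(y_1,y_2,y_3,y_4)$ in which three of the points are $\delta$-separated with diameter at most $\diam Z$. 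On such configurations, when $[y_1,\ldots,y_4]$ is small the fourth point is forced deep into a small ball about one of the separated points, and the distortion of $f$ there is governed --- via the quasisymmetry function from the previous step, together with a conformal elevator on $Z$ (Lemma \ref{nearfar}) to move between scales --- by a bound $[f y_1,\ldots,f y_4]\lesssim[y_1,\ldots,y_4]$ with a data-controlled implicit constant; when $[y_1,\ldots,y_4]$ is bounded below, the quadruple ranges over a compact family and the same bound is immediate. Combining these gives $\tilde\eta(t)=C\eta(Ct)$. In particular, when $\eta$ is linear --- the case relevant in Section \ref{quant}, where the de-snowflaked action is \emph{strongly} quasi-M\"obius --- $\tilde\eta$ is linear, so by Remark \ref{sqmisbilip} the map $Z\to\Sp^n$ is bi-Lipschitz, which is exactly what is needed there.

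The main obstacle is the first step: verifying that the Bonk--Kleiner parametrization can genuinely be produced with quasisymmetry function controlled by the data, rather than only by the (a priori uncontrolled) modulus of continuity of an arbitrary homeomorphism $Z\to\Sp^n$. This forces one to pass through \cite{BK02} rather than cite it as a black box --- checking that the linear local connectivity and uniform perfectness of $Z$, and for $n\ge 3$ the relevant combinatorial modulus estimates, come out with constants depending only on $n$, the regularity constant, and $\delta$, and that in dimensions $1$ and $2$ the parametrization theorems used there (of Tukia--V\"ais\"al\"a and of Bonk--Kleiner) admit quantitative forms as well. A more conceptual route to the same control is via hyperbolic fillings: the $\eta$-quasi-M\"obius action extends to a quasi-isometric, coarsely cocompact action of $\Gamma$ on a Gromov hyperbolic space with boundary $Z$, with constants controlled by $\eta$ and the data; once one knows (qualitatively, from Theorem \ref{BKqm}) that $Z$ is topologically $\Sp^n$, this space is quasi-isometric to $\Hy^{n+1}$, and a quantitative Milnor--\v{S}varc comparison transfers the control to the boundary map --- but here too the crux is arranging that the comparison constants not depend on the intermediate choices.
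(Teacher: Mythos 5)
Your reduction via Möbius invariance and cocompactness on triples is fine as far as it goes, but it funnels all of the difficulty into the one step you do not actually prove: the claim that, on quadruples $(y_1,y_2,y_3,y_4)$ whose first three points are $\delta$-separated, the conjugacy $f$ from Theorem \ref{BKqm} satisfies a \emph{linear} bound $[fy_1,\ldots,fy_4]\leq C[y_1,\ldots,y_4]$. For such a configuration the cross-ratio is comparable to $d(y_2,y_4)$ (and its image to $d_{\Sp^n}(fy_2,fy_4)$), so this is exactly a small-scale Lipschitz estimate for $f$ near arbitrary points --- essentially the full strength of the theorem, not a technical verification. The tools you invoke cannot deliver it: the quasisymmetry function of $f$ coming from Theorem \ref{BKqm} is not controlled and not linear, and Lemma \ref{nearfar} is only available for \emph{strongly} quasi-M\"obius actions, whereas Theorem \ref{BK2} is stated for arbitrary uniform $\eta$; even in the strong case, running the elevator argument (blow up $B(y_2,r)$ to unit scale by $g'$, use $f\circ g'=M\circ f$ with $M$ M\"obius, and estimate cross-ratios) only transfers the \emph{modulus of continuity} of $f$ between scales, giving $[f\vec{y}]\leq\omega([\vec{y}])$ for some uncontrolled modulus $\omega$, never the linear bound. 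Worse, the claim is false in general: take $Z=\Sp^n$ with $d_Z(x,y)=d_{\Sp^n}(hx,hy)$ for a quasiconformal, non-bi-Lipschitz $h$ (e.g.\ a radial stretch), and let a cocompact lattice $\Gamma$ of M\"obius transformations act by its original elements. Then $Z$ is Ahlfors $n$-regular of topological dimension $n$, the action is uniformly $\eta$-quasi-M\"obius and cocompact on triples, but for $n\geq 2$ Mostow--Tukia rigidity forces every conjugacy to a M\"obius action to agree with the identity set map up to postcomposition by a M\"obius transformation, and its cross-ratio distortion on separated quadruples is that of $h^{-1}$ --- genuinely nonlinear. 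So the route ``prove the improved distortion for the conjugacy itself'' cannot work as sketched; note also that your ``bounded below $\Rightarrow$ compact family'' remark fails when $y_4$ approaches $y_1$ or $y_3$, where the image cross-ratio is unbounded.

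The paper proceeds differently, and more cheaply, because it never tries to upgrade the conjugacy (equivariance is only restored afterwards, in Section \ref{quant}, by postcomposing with Tukia's map). Instead it re-traces the Bonk--Kleiner construction: $Z$ and $\Sp^n$ have bi-Lipschitz equivalent weak tangents; a quantitative form of \cite[Lemma 2.1]{BK02} yields a quantitative \cite[Lemma 5.3]{BK02}, saying the compactification of a weak tangent of $Z$ is $\eta_1$-quasi-M\"obius equivalent to $Z$ with $\eta_1(t)=C_1\eta(C_1t)$; the compactified weak tangent of $\Sp^n$ is $\Sp^n$; and the bi-Lipschitz identification of weak tangents becomes strongly quasi-M\"obius after compactification. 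Thus $\eta$ enters exactly once, already in the shape $C\eta(Ct)$, and every other factor in the composition is linear, which is what produces $\tilde{\eta}(t)=C\eta(Ct)$ (with $C$ allowed to depend on all the data --- the independence of $\eta$ you try to arrange is neither needed nor claimed). If you want to salvage your approach, you would have to prove the linear bound on separated quadruples for \emph{some} suitably chosen equivalence $Z\to\Sp^n$, and that is precisely the weak-tangent/bi-Lipschitz content of \cite{BK02} that your sketch tries to bypass.
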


\begin{proof}
The conclusion we state in this theorem is slightly different from that stated in \cite{BK02}. The authors conclude that the action $\Gamma \acts Z$ is quasisymmetrically conjugate to a M\"obius action on $\Sp^n$, but the above statement is implicit on the way to this conclusion. 

We must point out, though, that the authors do not explicitly state the quantitative relationship between $\tilde{\eta}$ and $\eta$. However, the control on $\tilde{\eta}$ that we give here comes from their proof: first establish, as they do, that $Z$ and $\Sp^n$ have bi-Lipschitz equivalent weak-tangents; a quantitative version of \cite[Lemma 2.1]{BK02} gives a quantitative version of \cite[Lemma 5.3]{BK02}, which guarantees that the compactification of a weak tangent of $Z$ is $\eta_1$-quasi-M\"obius equivalent to $Z$, where $\eta_1(t)=C_1\eta(C_1 t)$; the compactification of a weak tangent of $\Sp^n$ is again $\Sp^n$; and the bi-Lipschitz equivalence between weak tangents translates into a strongly quasi-M\"obius equivalence between the compactifications of weak tangents. Putting these facts together gives the desired function $\tilde{\eta}$.
\end{proof}

In our situation, $\Gamma$ acts on $(Z,d_{new})$ by strongly quasi-M\"obius maps, so the distortion function $\tilde{\eta}$ that we obtain from Theorem \ref{BK2} is also linear. Hence, $(Z,d_{new})$ is strongly quasi-M\"obius equivalent to $\Sp^n$. As any strongly quasi-M\"obius homeomorphism between compact sets is necessarily bi-Lipschitz (cf.\ Remark \ref{sqmisbilip}), we find that $(Z,d_{new})$ and $\Sp^n$ are bi-Lipschitz equivalent. Let $\tilde{f} \colon Z \rightarrow \Sp^n$ be a map giving this equivalence, so that
$$d_{new}(x,y) \lesssim d_{\Sp^n}(\tilde{f}(x), \tilde{f}(y)) \lesssim d_{new}(x,y)$$
for all $x,y \in Z$. This completes the proof in the case that $n=1$.

Suppose now that $n \geq 2$. Of course, the map $\tilde{f}$ that we have chosen need not conjugate the action $\Gamma \acts Z$ to a M\"obius action on $\Sp^n$. To correct this, we use a classical theorem of Tukia.

\begin{theorem}[Tukia {\cite[Theorem G]{Tuk86}}] \label{Tukia}
Let $\Gamma$ be a group that acts on $\Sp^n$, $n\geq 2$, by $\eta$-quasi-M\"obius homeomorphisms and is cocompact on triples. Then there is an $\tilde{\eta}$-quasi-M\"obius map $\psi \colon \Sp^n \rightarrow \Sp^n$ for which $\psi \Gamma \psi^{-1}$ is a M\"obius action; here, $\tilde{\eta}(t) = C\eta(t)$ for some constant $C$.
\end{theorem}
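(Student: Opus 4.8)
The statement is Tukia's Theorem G, which qualitatively also falls out of Bonk--Kleiner's Theorem \ref{BKqm} applied with $Z=\Sp^n$ in the chordal metric; the real content here is the quantitative control $\tilde\eta(t)=C\eta(t)$. The plan is to first recall the qualitative conjugacy and then sharpen the distortion of the conjugating map. Since $\Sp^n$ has bounded geometry, each $\eta$-quasi-M\"obius self-homeomorphism of $\Sp^n$ is $\eta'$-quasisymmetric, hence $K$-quasiconformal, with $\eta'$ and $K$ depending only on $\eta$ and $n$, so $\Gamma$ is a uniformly $K$-quasiconformal group. Compactness of $\Sp^n$ makes $\Gamma\acts\Sp^n$ a convergence action, and cocompactness on triples forces every point of $\Sp^n$ to be a conical (radial) limit point---the ``conformal elevator'' of Lemma \ref{nearfar} being a quantitative form of the resulting radial expansion.

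For the qualitative part I would build a bounded $\Gamma$-invariant measurable conformal structure $\mu$ on $\Sp^n$ (a measurable field of infinitesimal ellipsoids, eccentricity $\leq K'=K'(\eta,n)$) by the Sullivan--Tukia averaging procedure: push the standard structure forward by all group elements and take a barycenter in the nonpositively curved symmetric space of conformal structures at a point, using the dilatation bound together with the radial dynamics to make the family of pushed-forward structures precompact. One then ``integrates'' $\mu$. For $n=2$ the Ahlfors--Bers measurable Riemann mapping theorem produces a quasiconformal $\psi$ with $\psi^{*}\sigma_0=\mu$. For $n\geq 3$, where no such theorem is available, one instead blows $\mu$ up along a radial sequence of group elements at a point that is simultaneously a point of approximate continuity of $\mu$ and a conical limit point, and uses a.e.\ differentiability of quasiconformal maps together with the invariance of $\mu$ to conclude that $\mu$ can be straightened by a quasiconformal $\psi$. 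In either case every element of $\psi\Gamma\psi^{-1}$ is then $1$-quasiconformal, i.e.\ conformal, hence M\"obius, and $\psi$ is $K'$-quasiconformal, so quasi-M\"obius with control depending only on $\eta$ and $n$.

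To sharpen this to the linear bound $\tilde\eta(t)=C\eta(t)$, the decisive point is that $\psi\Gamma\psi^{-1}$ acts by M\"obius maps, which preserve cross-ratios \emph{exactly}. Given a four-tuple $\bar x$ in $\Sp^n$, cocompactness on triples yields $\gamma\in\Gamma$ carrying a sub-triple of $\bar x$ to a $\delta$-separated configuration; since $\psi\gamma\psi^{-1}$ is M\"obius, $[\psi\bar x]=[\psi(\gamma\bar x)]$, so the quasi-M\"obius distortion of $\psi$ on $\bar x$ is controlled by its distortion on the compact, non-degenerate family of four-tuples with a $\delta$-separated sub-triple, composed with a single application of $\eta$ relating $[\bar x]$ to $[\gamma\bar x]$. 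When $\Gamma$ is strongly quasi-M\"obius---the case needed for Theorem \ref{mainthm}, where $\eta$ is linear---the conformal-elevator maps of Lemma \ref{nearfar} are \emph{uniformly} bi-Lipschitz after rescaling, by parts (i) and (ii) of that lemma; hence at small scales $\mu$ is invariant under a uniformly bi-Lipschitz group, which forces $\psi$ itself to be bi-Lipschitz and therefore $\tilde\eta$ to be linear.

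\textbf{Main obstacle.} The genuinely hard step is the integration of $\mu$ in dimension $n\geq 3$: this is the analytic core of Tukia's theorem and exactly where the conical-limit-point hypothesis (supplied here by cocompactness on triples) is indispensable, since without it a bounded invariant conformal structure need not be quasiconformally equivalent to the standard one. Choosing the right blow-up point, controlling the convergence of the rescaled maps and of the rescaled structures, and assembling them into a single global straightening homeomorphism is where the real work lies; carrying the dependence of every constant on $\eta$ and $n$ through this argument is what delivers the quantitative conclusion.
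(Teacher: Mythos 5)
Your qualitative outline (uniformly quasiconformal group, Sullivan--Tukia invariant conformal structure, Ahlfors--Bers for $n=2$, blow-up at a conical point for $n\geq 3$) is essentially Tukia's own argument, but the paper does not reprove it: it cites \cite{Tuk86} and only supplies the quantitative control, which is exactly where your proposal has a genuine gap. Your reduction gives $[\psi\bar x]=[\psi(\gamma\bar x)]$ exactly and $[\gamma\bar x]\leq\eta([\bar x])$; to conclude $\tilde\eta(t)=C\eta(t)$ you then need $[\psi\bar y]\leq C\,[\bar y]$ for every four-tuple $\bar y$ containing a $\delta$-separated sub-triple. But these four-tuples do not form a ``compact, non-degenerate family'': the fourth point may approach one of the separated points, the cross-ratio tends to $0$ or $\infty$, and in that regime $[\psi\bar y]/[\bar y]$ is comparable to $d(\psi y_i,\psi y_4)/d(y_i,y_4)$ (up to constants depending on $\delta$), which is unbounded for a general $K$-quasiconformal $\psi$ --- a radial stretch with H\"older-but-not-Lipschitz behavior at $y_i$ already defeats it. Without such a multiplicative bound you only get $\tilde\eta=\eta_1\circ\eta$ with $\eta_1$ the (a priori nonlinear) distortion of $\psi$, not $C\eta$. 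So the step silently assumes $\psi$ is bi-Lipschitz near the separated points, which is the very kind of control to be proven; and your fallback --- that invariance of $\mu$ under a uniformly bi-Lipschitz group ``forces $\psi$ to be bi-Lipschitz'' --- is asserted, not argued, and is not a standard fact. Note also that the theorem as stated concerns arbitrary $\eta$, while your sharpening only addresses the linear case; and deriving the qualitative statement from Theorem \ref{BKqm} is circular, since Bonk--Kleiner's proof of that theorem itself invokes Tukia's Theorem G.

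The paper's route is different and much shorter: it keeps track of the constants through Tukia's actual construction. Tukia produces the conjugacy as a limit of maps $f_i(x)=\hat\alpha(\lambda_i\cdot g_i(x))$, where $g_i\in\Gamma$, $\lambda_i>0$ are scaling factors, and $\alpha\in\GL_n(\R)$ is a single fixed linear map with $\hat\alpha$ its conjugate on $\Sp^n$ by stereographic projection. Since scaling does not change cross-ratios and $\hat\alpha$ is bi-Lipschitz with constant $\lVert\hat\alpha\rVert$, each $f_i$ is quasi-M\"obius with distortion $t\mapsto\lVert\hat\alpha\rVert^4\eta(t)$, hence so is the limit $\psi$ --- linear in $\eta$, with no composition of control functions. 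If you want a self-contained argument, you must either reproduce this blow-up construction (so that $\psi$ is manifestly a limit of group elements post-composed with one fixed bi-Lipschitz map) or give an honest proof that the straightening map can be chosen with multiplicative cross-ratio distortion; as written, your proposal does neither.
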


\begin{proof}
Again, Tukia's stated result does not include the quantitative relationship between $\tilde{\eta}$ and $\eta$ that we give here. His proof, however, constructs $\psi$ as a limit of maps whose cross-ratio distortion we can keep track of. More specifically, he finds a sequence $g_i \in \Gamma$, corresponding scaling factors $\lambda_i >0$, and a linear map $\alpha \in \GL_n(\R)$ for which 
$$f_i(x) = \hat{\alpha}(\lambda_i \cdot g_i(x))$$
converges to the desired map, $\psi$. Here, $\hat{\alpha}$ is the bi-Lipschitz homeomorphism of $\Sp^n$ obtained from $\alpha$ by conjugation by stereographic projection. Consequently,
$$\begin{aligned}
&\left[ f_i(x_1),f_i(x_2),f_i(x_3),f_i(x_4)\right] \\
&\leq \lVert \hat{\alpha} \rVert^4 [\lambda_ig_i(x_1), \lambda_ig_i(x_2), \lambda_ig_i(x_3),\lambda_ig_i(x_4)] \leq \lVert \hat{\alpha} \rVert^4 \eta([x_1,x_2,x_3,x_4]),
\end{aligned}$$
as scaling by $\lambda_i$ does not change the cross-ratio. We use $\lVert \hat{\alpha} \rVert$ to denote the bi-Lipschitz constant of $\hat{\alpha}$.

Thus, each $f_i$ is $\tilde{\eta}$-quasi-M\"obius with $\tilde{\eta}(t) = \lVert \hat{\alpha} \rVert^4 \eta(t)$, and so the limit function $\psi$ is also $\tilde{\eta}$-quasi-M\"obius.
\end{proof}

Applying this theorem to the strongly quasi-M\"obius action $\tilde{f} \Gamma \tilde{f}^{-1}$ on $\Sp^n$, we obtain a strongly quasi-M\"obius $\psi$, which is therefore also bi-Lipschitz, such that
$$(\psi \circ \tilde{f}) \Gamma (\tilde{f}^{-1} \circ \psi^{-1})$$
is a group of M\"obius transformations on $\Sp^n$. Setting $f= \psi \circ \tilde{f}$ yields the desired $f$.

\begin{remark} \label{n=1conj}
It is not clear whether the stronger conclusion (bi-Lipschitz conjugacy to a M\"obius group) should hold in the case $n=1$. Tukia's theorem has analogs in this setting; see, for example, \cite{Hin90} and \cite{Mar06}, which give us quasisymmetric conjugacy to a M\"obius group. The problem is in choosing the ``correct" conjugacy. Note that there are pairs of cocompact M\"obius groups acting on $\Sp^1$ that are quasisymmetrically conjugate but whose conjugating homeomorphism has non-zero derivative \ti{nowhere}. See \cite{Iva96} for more information about the delicacy of such questions.
\end{remark}

\section{Entropy Rigidity in Coarse Geometry} \label{rcg}

We now turn our attention to Theorem \ref{rigidity}, which is a rigidity result in the setting of Gromov hyperbolic geometry. We refer primarily to \cite{BS07} and \cite{GH90} for background on hyperbolic metric spaces. 

Let $(X,d)$ be a metric space. We say that $X$ is \ti{proper} if all closed balls $\overline{B}(x,r)$ are compact and that $X$ is \ti{geodesic} if any two points can be connected by an isometric image of an interval in $\R$.

Given two metric spaces $(X,d_X)$ and $(Y,d_Y)$, a map $f \colon X \rightarrow Y$ is called a \ti{quasi-isometric embedding} if there are constants $\lambda \geq 1$ and $k \geq 0$ such that
$$\frac{1}{\lambda} d_X(x,x') - k \leq d_Y(f(x),f(x')) \leq \lambda d_X(x,x') + k$$
for all $x,x' \in X$. If, in addition, each point $y \in Y$ lies in the $k$-neighborhood of the image $f(X)$, then we say that $f$ is a \ti{quasi-isometry}. A \ti{rough isometric embedding} or a \ti{rough isometry} is defined in the same way by requiring that $\lambda =1$. For the most part, we will be concerned with rough isometries. When it is necessary to specify the additive constant $k$, we will use the term \ti{$k$-rough isometry}. 

For any three points $x,y,p \in X$, let
$$(x,y)_p = \tfrac{1}{2} \lp d_X(x,p) + d_X(y,p) - d_X(x,y) \rp. $$
This is the \textit{Gromov product} of $x$ and $y$ based at $p$.

\begin{definition}
A metric space $X$ is \textit{$\delta$-hyperbolic} if there is a base-point $p \in X$ so that
\begin{equation} \label{hypeq}
(x,y)_p \geq \min \{(x,z)_p, (y,z)_p \} - \delta
\end{equation}
for every $x,y,z \in X$. We say that $X$ is a \textit{(Gromov) hyperbolic metric space} if it is $\delta$-hyperbolic for some $\delta \geq 0$.
\end{definition}
We will refer to the inequality in \eqref{hypeq} as the \textit{$\delta$-inequality}. Although this definition may seem slightly esoteric, it has a concrete geometric meaning as a ``thinness" condition on triangles. More precisely, if $X$ is a $\delta$-hyperbolic geodesic metric space, then for every geodesic triangle in $X$, each side is contained in the $\delta'$-neighborhood of the union of the other two sides, where $\delta'$ is a constant multiple of $\delta$ (cf.\ \cite[Proposition 2.1.3]{BS07}).

Iterating the $\delta$-inequality, one can obtain a corresponding condition on finite chains of points in $X$. Namely, if $x_0,x_1,\ldots,x_n \in X$, then
$$(x_0,x_n)_p \geq \min_{1\leq i \leq n} (x_i,x_{i-1})_p - \frac{\delta}{\log 2} \log n - c ,$$
where $c$ is a uniform constant depending only on $\delta$ \cite[Chapter 2, Lemma 14(i)]{GH90}. Notice that the smaller we can take $\delta$, the more negatively curved $X$ is. This leads to the following definition, given by M. Bonk and T. Foertsch in \cite{BF06}.

\begin{definition} \label{ACU}
For $\kappa \in [-\infty, 0)$, we say that $X$ has an asymptotic upper curvature bound $\kappa$ if there is $p \in X$ and a constant $c \geq 0$ so that
$$(x_0,x_n)_p \geq \min_{1\leq i \leq n} (x_i,x_{i-1})_p - \frac{1}{\sqrt{-\kappa}} \log n - c$$
for all chains $x_0,\ldots,x_n$ in $X$.
\end{definition}
Here, we use the convention that $1/\sqrt{\infty} = 0$. If $X$ has an asymptotic upper curvature bound $\kappa < 0$, then we say that $X$ is an $\AC_u(\kappa)$-space. By our discussion in the previous paragraph, every hyperbolic metric space is an $\AC_u(\kappa)$-space for some $\kappa <0$. And conversely, the definitions immediately imply that every $\AC_u(\kappa)$-space is Gromov hyperbolic.

Allowing the additive constant $c$ in the definition of asymptotic upper curvature is what makes this notion \textit{asymptotic}. A collection of uniformly bounded configurations in $X$ will not affect the asymptotic curvature bounds, as one could simply make $c$ larger. It makes sense, then, that the best way to study these curvature bounds is to pass to the boundary at infinity, which we now recall.

\subsection{The hyperbolic boundary}

To begin, we say that a sequence $\{x_n\}$ in $X$ \textit{converges at infinity} if 
$$(x_n,x_m)_p \rightarrow \infty \hspace{0.5cm} \text{as} \hspace{0.5cm} n,m\rightarrow \infty.$$ 
It is immediate to see that this property is independent of $p$. We consider two such sequences $\{x_n\}$ and $\{y_n\}$ to be equivalent if 
$$\lim_{n\rightarrow \infty} (x_n,y_n)_p = \infty,$$
and in this case, we write $\{x_n\} \sim \{y_n\}$. This is an equivalence relation on the set of sequences converging at infinity, and we let $\bdry X$ denote the set of equivalence classes. Observe that if a sequence converges at infinity, then any subsequence also converges at infinity and, moreover, is equivalent to the original sequence.

The Gromov product on $X$ extends to $\bdry X$ by
$$(\xi,\eta)_p = \inf \liminf_{n \rightarrow \infty} (x_n,y_n)_p$$
where the infimum is taken over all $\{x_n\}$ and $\{y_n\}$ in the equivalence classes $\xi$ and $\eta$, respectively. Although taking this infimum is necessary in general, the following lemma shows that it is not too restrictive.

\begin{lemma}[\cite{BS07}, Lemma 2.2.2] \label{2.2.2}
Let $X$ be $\delta$-hyperbolic with base-point $p$, and let $\xi,\eta,\zeta \in \bdry X$.
\begin{enumerate}
\item[\textup{(i)}] If $\{x_n\}$ represents $\xi$ and $\{y_n\}$ represents $\eta$, then 
$$(\xi,\eta)_p \leq \liminf_{n\rightarrow \infty}(x_n,y_n)_p \leq \limsup_{n\rightarrow \infty}(x_n,y_n)_p \leq (\xi,\eta)_p + 2\delta.$$
\item[\textup{(ii)}] The $\delta$-inequality $(\xi,\eta)_p \geq \min \{(\xi,\zeta)_p,(\eta,\zeta)_p \} -\delta$ is satisfied.
\end{enumerate}
\end{lemma}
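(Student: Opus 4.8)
The plan is to derive both parts from just two facts about $X$: the $\delta$-inequality for the Gromov product inside $X$, and the defining property of the equivalence relation on $\bdry X$ — namely that two sequences representing the same boundary point have Gromov product tending to $+\infty$. Nothing more is needed, and the base-point $p$ stays fixed throughout.

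I would prove (i) first. The lower bound $(\xi,\eta)_p \le \liminf_n (x_n,y_n)_p$ needs no argument: $(\{x_n\},\{y_n\})$ is one of the admissible pairs of representatives in the infimum defining $(\xi,\eta)_p$, so its $\liminf$ bounds that infimum from above. The inequality $\liminf \le \limsup$ is automatic. The only real content is $\limsup_n(x_n,y_n)_p \le (\xi,\eta)_p + 2\delta$, and here the strategy is to first prove a comparison estimate: for any two pairs of representatives $(\{x_n\},\{y_n\})$ and $(\{x_n'\},\{y_n'\})$ of the \emph{same} pair $(\xi,\eta)$, one has $\liminf_n(x_n,y_n)_p \ge \liminf_n(x_n',y_n')_p - 2\delta$. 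This follows by applying the $\delta$-inequality twice along the chain $x_n,\, x_n',\, y_n',\, y_n$, which yields $(x_n,y_n)_p \ge \min\{(x_n,x_n')_p,\ (x_n',y_n')_p,\ (y_n',y_n)_p\} - 2\delta$ for every $n$; since $\{x_n\}\sim\{x_n'\}$ and $\{y_n\}\sim\{y_n'\}$ the first and third terms tend to $+\infty$, so the $\liminf$ of the right-hand side equals $\liminf_n(x_n',y_n')_p - 2\delta$. To conclude, I would pass to a subsequence $(n_k)$ along which $(x_{n_k},y_{n_k})_p \to \limsup_n(x_n,y_n)_p$; then $\{x_{n_k}\}$, $\{y_{n_k}\}$ are again representatives of $\xi$, $\eta$, and applying the comparison estimate to this pair and to a second pair whose $\liminf$ lies within $\epsilon$ of the infimum $(\xi,\eta)_p$ gives $\limsup_n(x_n,y_n)_p \le (\xi,\eta)_p + \epsilon + 2\delta$. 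Letting $\epsilon \to 0$ finishes (i).

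For (ii) I would fix arbitrary representatives $\{x_n\}$, $\{y_n\}$, $\{z_n\}$ of $\xi$, $\eta$, $\zeta$ and start from the genuine $\delta$-inequality in $X$, $(x_n,y_n)_p \ge \min\{(x_n,z_n)_p,\ (y_n,z_n)_p\} - \delta$, valid for every $n$. Choose a subsequence $(n_k)$ realizing $\liminf_n(x_n,y_n)_p$, and refine it so that $(x_{n_k},z_{n_k})_p$ and $(y_{n_k},z_{n_k})_p$ converge in $[0,+\infty]$, say to $L_1$ and $L_2$. Passing to the limit, using continuity of $\min$, gives $\liminf_n(x_n,y_n)_p \ge \min\{L_1,L_2\} - \delta$. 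Now $\{x_{n_k}\}$ and $\{z_{n_k}\}$ still represent $\xi$ and $\zeta$, so the easy half of (i) gives $L_1 = \liminf_k(x_{n_k},z_{n_k})_p \ge (\xi,\zeta)_p$, and likewise $L_2 \ge (\eta,\zeta)_p$; hence $\liminf_n(x_n,y_n)_p \ge \min\{(\xi,\zeta)_p,\ (\eta,\zeta)_p\} - \delta$ for every choice of representatives of $\xi$ and $\eta$. Taking the infimum over those representatives yields the boundary $\delta$-inequality. Note that (ii) uses only the trivial half of (i), not the $2\delta$ bound.

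The step I expect to be the main obstacle is the upper bound in (i): it is the only place where one genuinely exploits the equivalence relation rather than just the definition of an infimum, and it is where the extra $2\delta$ enters — one $\delta$ for each application of the $\delta$-inequality needed to ``bridge'' from one pair of representatives to another. Everything else reduces to careful handling of nested subsequences; the one point to watch is that the auxiliary subsequential limits are allowed to equal $+\infty$, a case in which all the relevant inequalities hold trivially.
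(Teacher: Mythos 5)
Your argument is correct. Note that the paper itself gives no proof of this lemma; it is quoted directly from \cite{BS07}, and your argument is essentially the standard one found there: the only genuine content is the $2\delta$ upper bound in (i), obtained exactly as you do by applying the $\delta$-inequality twice along a four-point chain $x_n, x_n', y_n', y_n$ and using that equivalent sequences have Gromov product tending to $+\infty$, while (ii) follows from the pointwise $\delta$-inequality plus only the trivial half of (i). The two delicate points — that subsequences of a sequence converging at infinity represent the same boundary point (used when you extract the $\limsup$-realizing subsequence), and that auxiliary subsequential limits may be $+\infty$ — are both addressed, so the proof is complete as written.
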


When $X$ is a $\CAT(-1)$-space, Bourdon \cite{Bour96} has shown that
$$\rho(\xi,\eta) = e^{-(\xi,\eta)_p}$$
is a metric on $\bdry X$ and thus gives the boundary a canonical metric. In the more general Gromov hyperbolic setting, however, this function may fail the triangle inequality. In its place, we have
\begin{equation} \label{Keq}
\rho(\xi,\eta) \leq K \max \{ \rho(\xi,\zeta),\rho(\zeta,\eta) \}
\end{equation}
for any $\xi,\eta,\zeta \in \bdry X$, which follows immediately from part (ii) in the preceding lemma. Note that $K=e^{\delta}$ if $X$ is $\delta$-hyperbolic. A general procedure then produces, for $\e$ small enough (depending only on $\delta$), a metric $d_{\e}$ on $\bdry X$ satisfying
$$\tfrac{1}{4} e^{-\e(\xi,\eta)_p} \leq d_{\e}(\xi,\eta) \leq e^{-\e(\xi,\eta)_p}.$$ See \cite[Section 2.2]{BS07}, especially Lemma 2.2.5, for details. This motivates the following definition.

\begin{definition}
A metric $d$ on $\bdry X$ is called a \textit{visual metric of parameter $\e$} if there is a base-point $p \in X$ so that
$$e^{-\e(\xi,\eta)_p} \lesssim d(\xi,\eta) \lesssim e^{-\e(\xi,\eta)_p}$$
for all $\xi,\eta \in \bdry X$. We say that $d$ is \textit{visual} if it is visual with respect to some $\e>0$.
\end{definition}
The dependence on $p$ is not important here; if $d$ is visual with respect to $p$, then it will be visual with respect to any other base-point, with the same parameter $\e$. Observe that if $\bdry X$ admits a visual metric of parameter $\e$, then it admits metrics of all parameters smaller than $\e$. Thus, if we set 
$$\e_0 = \e_0(X) = \sup \{ \e : \text{there is a visual metric on } \bdry X \text{ of parameter } \e \},$$
then each $\e \in (0,\e_0)$ has an associated visual metric. We call this interval the \textit{visual interval}. One should keep in mind the heuristic that the more negatively curved $X$ is, the larger $\e_0$ will be.

The relationship between curvature in $X$ and the length of this visual interval is more explicit in terms of asymptotic upper curvature bounds. Actually, we first need an additional assumption on $X$ to guarantee that its boundary accurately reflects its geometry at large scales. 

\begin{definition} \label{visualdef}
We say that $X$ is \textit{visual} if there is a constant $k$ and a base-point $p \in X$ such that for every $x \in X$ there is a $k$-rough isometric embedding $\gamma \colon [0,\infty) \rightarrow X$ with $\gamma(0)=p$ and $x$ in the image of $\gamma$.
\end{definition}
We will refer to the image of such $\gamma$ as a $k$-rough geodesic ray, starting at $p$. For visual metric spaces, the $\AC_u(\kappa)$ condition can be transferred to the boundary.

\begin{prop} [\cite{BF06}, Lemma 4.1] \label{chains}
Let $X$ be a visual, hyperbolic metric space and assume that there are constants $a$ and $c$ with
$$(\xi_0,\xi_n)_p \geq \min_{1\leq i \leq n} (\xi_i,\xi_{i-1})_p - a\log n - c$$
for all chains $\xi_0,\ldots,\xi_n$ in $\bdry X$. Then there is a constant $c'$ for which
$$(x_0,x_n)_p \geq \min_{1\leq i \leq n} (x_i,x_{i-1})_p - a\log n - c'$$
for all chains $x_0,\ldots,x_n$ in $X$.
Conversely, if the inequality with chains in $X$ holds for some $c'$, then there is a constant $c$ for which the inequality with boundary chains holds.
\end{prop}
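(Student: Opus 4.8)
The plan is to treat both implications by the same mechanism: only chains that ``run out towards the boundary'' are relevant for asymptotic curvature, so each direction should reduce to a dictionary between chains of points of $X$ lying on rough geodesic rays from $p$ and chains of their endpoints in $\bdry X$. I would first set up this dictionary and then run essentially one computation forwards and backwards.

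First I would record three elementary estimates for a $\delta$-hyperbolic space. (a) For $x,y\in X$, $(x,y)_p\le\min\{d(p,x),d(p,y)\}$, straight from the definition. (b) If $\gamma\colon[0,\infty)\to X$ is a $k$-rough geodesic ray with $\gamma(0)=p$ and $\xi\in\bdry X$ is its endpoint, then $|(x,\xi)_p-d(p,x)|\le C_1$ for every $x$ on $\gamma$, with $C_1=C_1(k,\delta)$; this is a short computation using that $\gamma$ is a rough isometric embedding, together with Lemma \ref{2.2.2}(i) to pass between the representative $\{\gamma(s_n)\}$ of $\xi$ and an arbitrary one. (c) The $\delta$-inequality holds, with some constant $C_2=C_2(\delta)$ in place of $\delta$, for triples drawn from $X\cup\bdry X$; this follows from Lemma \ref{2.2.2} by approximating boundary points by representing sequences, exactly as in \cite[Section 2.2]{BS07}. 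Iterating (c) over four points gives $(u_0,u_3)_p\ge\min_i(u_i,u_{i-1})_p-C_3$ with $C_3=2C_2$, which is all the iteration I need, since the auxiliary chains below always have boundedly many points.

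Combining (a)--(c) yields the comparison lemma: if $\xi,\eta\in\bdry X$ and $x,y\in X$ lie on $k$-rough geodesic rays from $p$ towards $\xi$ and $\eta$ respectively, then $(\xi,\eta)_p\ge(x,y)_p-C$ and $(x,y)_p\ge\min\{d(p,x),d(p,y),(\xi,\eta)_p\}-C$, with $C=C(k,\delta)$. For the first implication, given a chain $x_0,\dots,x_n$ in $X$ put $m=\min_i(x_i,x_{i-1})_p$; by (a) each $d(p,x_i)\ge m$. Using that $X$ is visual, choose for each $i$ a $k$-rough geodesic ray from $p$ through $x_i$ and let $\xi_i$ be its endpoint. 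The comparison lemma gives $(\xi_i,\xi_{i-1})_p\ge(x_i,x_{i-1})_p-C\ge m-C$, so the boundary hypothesis yields $(\xi_0,\xi_n)_p\ge m-a\log n-c-C$; feeding this back through the comparison lemma, together with $(x_0,\xi_0)_p,(x_n,\xi_n)_p\ge m-C_1$ from (b), produces $(x_0,x_n)_p\ge m-a\log n-c'$ with $c'=c+C(k,\delta)$. The converse is the mirror image: given a boundary chain $\xi_0,\dots,\xi_n$, fix $T$ larger than $\max_i(\xi_i,\xi_{i-1})_p+C$ (allowed to depend on the chain), pick $k$-rough geodesic rays from $p$ to each $\xi_i$, and take points $x_i=x_i(T)$ on these rays with $d(p,x_i)\ge T$; the comparison lemma gives $\min_i(x_i,x_{i-1})_p\ge\min_i(\xi_i,\xi_{i-1})_p-C$, so the $X$-chain hypothesis gives $(x_0(T),x_n(T))_p\ge\min_i(\xi_i,\xi_{i-1})_p-a\log n-c'-C$, and one more application of the comparison lemma turns the left-hand side into the desired lower bound for $(\xi_0,\xi_n)_p$.

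The main obstacle will be the bookkeeping around the boundary Gromov product: since $(\cdot,\cdot)_p$ on $\bdry X$ is an infimum of limit inferiors, every mixed $\delta$-inequality and every use of estimate (b) must be checked against an arbitrary representing sequence, and one must verify that the accumulated additive constants remain independent of the chain --- in particular of the auxiliary parameter $T$ in the converse. A secondary point needing care is the claim that in a visual hyperbolic space every boundary point is the endpoint of a rough geodesic ray from $p$; I would either cite this from \cite{BS07} or obtain it by a limiting argument from the visual hypothesis and $\delta$-hyperbolicity. Everything else is routine manipulation of Gromov products.
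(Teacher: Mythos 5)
Your argument is correct; note, however, that the paper offers no proof of Proposition \ref{chains} at all --- it is quoted directly from Bonk--Foertsch \cite{BF06} --- so there is no in-paper argument to compare against, and your proposal stands or falls on its own. It stands: the mechanism of comparing $(x,y)_p$ with $(\xi,\eta)_p$ for $x,y$ on rough rays from $p$ asymptotic to $\xi,\eta$, using the three estimates $(x,y)_p \leq \min\{d(p,x),d(p,y)\}$, $|(x,\xi)_p - d(p,x)| \leq C_1(k,\delta)$ for $x$ on a ray toward $\xi$, and the mixed $\delta$-inequality on $X \cup \bdry X$, is the right one, and both directions close with constants depending only on $\delta$, $k$, and the visual-metric data, not on the chain or on your parameter $T$. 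The one point you flag --- that in a visual hyperbolic space every boundary point is the endpoint of a rough geodesic ray from $p$ --- does deserve the care you give it, since without properness a limiting argument producing an actual ray to a prescribed $\xi$ is delicate; but your converse never needs a genuine ray. Given $\xi_i$, any representing sequence $\{y_m\}$ satisfies $(y_m,\xi_i)_p \to \infty$ and $d(p,y_m) \geq (y_m,\xi_i)_p$, so taking $x_i = y_m$ with $m$ large produces points with $(x_i,\xi_i)_p \geq T$ and $d(p,x_i) \geq T$, which is all your comparison lemma actually uses. Making that substitution removes the only questionable step, makes the independence from $T$ transparent, and shows incidentally that the converse direction requires no visuality; the visual hypothesis is used only in the forward direction, where you push the interior chain out to the boundary along the rays through the $x_i$.
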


This condition on boundary chains gives more precise control on the type of inequality for $\rho$ in \eqref{Keq}. Indeed, we now have
$$\rho(\xi_0,\xi_n) \leq Cn^a \max_{1\leq i \leq n} \rho(\xi_i,\xi_{i-1})$$
for any chain $\xi_0,\ldots,\xi_n$. Arguments similar to those in \cite[Lemma 2.2.5]{BS07} allow one to build visual metrics on $\bdry X$, but this time with more control on the optimal value of $\e_0$. In the end, the authors obtain the following.

\begin{prop}[\cite{BF06}, Theorem 1.5] \label{interval}
Let $X$ be a visual, hyperbolic metric space. If $X$ is $\AC_u(\kappa)$, then for each $0 <\e < \sqrt{-\kappa}$ there is a visual metric on $\bdry X$ with parameter $\e$. Conversely, if there is a visual metric on $\bdry X$ with parameter $\e$, then $X$ is an $\AC_u(-\e^2)$-space.
\end{prop}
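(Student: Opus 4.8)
The plan is to prove the two implications separately, using Proposition \ref{chains} to transport the relevant chain inequality back and forth between $X$ and $\bdry X$, and a Frink-type chain construction to convert a chain inequality on $\bdry X$ into an honest visual metric.

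\textbf{Forward direction.} Suppose $X$ is $\AC_u(\kappa)$, so by Definition \ref{ACU} the chain inequality holds in $X$ with exponent $a = 1/\sqrt{-\kappa}$. The converse half of Proposition \ref{chains} then gives the corresponding inequality on the boundary,
$$(\xi_0,\xi_n)_p \ge \min_{1\le i \le n}(\xi_i,\xi_{i-1})_p - a\log n - c,$$
which, writing $\rho(\xi,\eta) = e^{-(\xi,\eta)_p}$ and exponentiating, reads $\rho(\xi_0,\xi_n) \le e^{c} n^{a}\max_i\rho(\xi_i,\xi_{i-1})$ for every chain. Fix $\e$ with $0 < \e < \sqrt{-\kappa} = 1/a$, so that $\e a < 1$, and define the chain pseudometric $d_\e(\xi,\eta) = \inf\sum_{i=1}^n \rho(\xi_{i-1},\xi_i)^{\e}$, the infimum over chains $\xi = \xi_0,\dots,\xi_n = \eta$. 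It is symmetric and satisfies the triangle inequality by construction, and $d_\e(\xi,\eta) \le \rho(\xi,\eta)^{\e} = e^{-\e(\xi,\eta)_p}$ via the one-edge chain. The substance is the reverse comparison $d_\e(\xi,\eta) \gtrsim e^{-\e(\xi,\eta)_p}$, equivalently: there is $C'$ with $\rho(\xi_0,\xi_n)^{\e} \le C'\sum_{i=1}^n \rho(\xi_{i-1},\xi_i)^{\e}$ for every chain.

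\textbf{The metrization lemma.} I would prove this inequality by induction on the number of edges $n$. For bounded $n$ it is immediate from the displayed chain inequality (with $C'$ depending on the cutoff). For large $n$, subdivide the chain into $L$ consecutive sub-chains of roughly equal $\rho^{\e}$-mass, where $L$ is a large constant to be fixed; the inductive hypothesis applied to each sub-chain bounds $\rho(y_{\ell-1},y_\ell)^{\e} \lesssim C'(\text{total mass})/L$ for the endpoints $y_0,\dots,y_L$ of the pieces, and a single application of the $L$-point chain inequality then gives $\rho(\xi_0,\xi_n)^{\e} \lesssim e^{c\e} L^{\e a - 1}C'\sum_i \rho(\xi_{i-1},\xi_i)^{\e}$. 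Since $\e a - 1 < 0$, choosing $L$ large enough forces the prefactor below $1$ and the induction closes. This yields that $d_\e$ is a genuine metric on $\bdry X$ with $d_\e(\xi,\eta) \asymp e^{-\e(\xi,\eta)_p}$, i.e.\ a visual metric of parameter $\e$. This is the refinement of \cite[Lemma 2.2.5]{BS07} in which the polynomial defect $n^a$ replaces the constant one; the sharpening from ``$\e$ small'' to the optimal range $\e < 1/a$ is exactly where the polynomial (rather than exponential) growth of the defect is used — one multi-way split beats the exponential cost of iterating a three-point inequality.

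\textbf{Converse direction.} Suppose $d$ is a visual metric on $\bdry X$ of parameter $\e$, so $d(\xi,\eta) \asymp e^{-\e(\xi,\eta)_p}$. For any chain $\xi_0,\dots,\xi_n$ in $\bdry X$, the triangle inequality for the \emph{metric} $d$ gives
$$e^{-\e(\xi_0,\xi_n)_p} \lesssim d(\xi_0,\xi_n) \le \sum_{i=1}^n d(\xi_{i-1},\xi_i) \lesssim \sum_{i=1}^n e^{-\e(\xi_i,\xi_{i-1})_p} \le n\, e^{-\e \min_i (\xi_i,\xi_{i-1})_p},$$
and taking logarithms yields $(\xi_0,\xi_n)_p \ge \min_i (\xi_i,\xi_{i-1})_p - \tfrac{1}{\e}\log n - c$ for a uniform $c$; that is, $\bdry X$ satisfies the chain inequality with exponent $a = 1/\e$. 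The first half of Proposition \ref{chains} transports this to $X$, giving $(x_0,x_n)_p \ge \min_i (x_i,x_{i-1})_p - \tfrac{1}{\e}\log n - c'$ for all chains in $X$, which by Definition \ref{ACU} with $1/\sqrt{-\kappa} = 1/\e$ is precisely the statement that $X$ is $\AC_u(-\e^2)$.

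\textbf{Main obstacle.} The converse is essentially formal once Proposition \ref{chains} is available. The genuine work is the metrization lemma of the forward direction: carrying the polynomial defect $n^a$ through the Frink-type induction and extracting the sharp threshold $\e a < 1$. The delicate point is the bookkeeping in the multi-way subdivision — arranging that the number of pieces $L$, and hence the constant $C'$, can be chosen once and for all, so that the inductive estimate does not degrade as $n \to \infty$.
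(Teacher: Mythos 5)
Two preliminary remarks. The paper itself does not prove this proposition: it is quoted from \cite{BF06}, and the surrounding text only sketches the intended route, namely transporting the chain inequality to the boundary via Proposition \ref{chains}, rewriting it as $\rho(\xi_0,\xi_n)\le e^{c}n^{a}\max_i\rho(\xi_{i-1},\xi_i)$, and then running a Frink-type construction ``as in \cite[Lemma 2.2.5]{BS07} but with more control on $\e_0$.'' Your proposal follows exactly this route; your use of the two halves of Proposition \ref{chains} is the right way around in both directions, and your converse direction (triangle inequality for the visual metric gives the boundary chain inequality with $a=1/\e$, then Proposition \ref{chains} pushes it into $X$) is correct and essentially formal, as you say.

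The one genuine gap is in the inductive step of your metrization lemma. You cannot, in general, ``subdivide the chain into $L$ consecutive sub-chains of roughly equal $\rho^{\e}$-mass'': a single edge may carry almost all of the mass $S=\sum_i\rho(\xi_{i-1},\xi_i)^{\e}$, and then any piece containing it has mass comparable to $S$, so the asserted bound $\rho(y_{\ell-1},y_\ell)^{\e}\lesssim C'S/L$ for the piece endpoints fails and your prefactor no longer drops below $1$. The repair is standard but has to be said: call an edge heavy if its mass exceeds $S/L$ (there are fewer than $L$ of these), split the chain at the heavy edges, and greedily partition the remaining runs of light edges into blocks of mass at most $2S/L$; this yields a coarser chain of at most $3L$ pieces, each either a single heavy edge or a light block with strictly fewer edges than the original chain. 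The inductive hypothesis gives $\rho(y_{\ell-1},y_\ell)^{\e}\le 2C'S/L$ for light blocks, while a heavy edge contributes at most $S$; one application of the chain inequality then gives $\rho(\xi_0,\xi_n)^{\e}\le e^{c\e}(3L)^{a\e}\max\{S,\,2C'S/L\}$. Choosing $C'\ge L/2$ makes the maximum equal to $2C'S/L$, and the induction closes as soon as $2e^{c\e}(3L)^{a\e}\le L$, which is achievable precisely because $a\e<1$; the base case is simply $n=1$ (any chain with $n\ge2$ splits into at least two strictly shorter pieces, so no bounded-$n$ cutoff is needed). With this modification your argument is complete and gives the sharp range $0<\e<\sqrt{-\kappa}$, in line with the cited proof of Bonk and Foertsch.
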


Together with other results in \cite{BF06}, this fact suggests that the correct analog of $\CAT(-1)$ in the coarse setting is $\AC_u(-1)$. In the case where $X$ is $\CAT(-1)$, the canonical metric on $\bdry X$ is associated to the parameter $\e_0=1$; in particular, there \textit{are} visual metrics of parameter 1. Unfortunately, this may not happen for more general $\AC_u(-1)$-spaces, even though we know that visual metrics exist for all parameters $0<\e<1$.

\subsection{Geometric actions on hyperbolic metric spaces}

Let $X$ be a proper, geodesic, hyperbolic metric space. These basic assumptions guarantee two important ``accessibility" properties for points in $\bdry X$. First, for any base-point $p \in X$ and each $z \in \bdry X$, there is an isometric embedding $\gamma \colon [0,\infty) \rightarrow X$ for which
$$\gamma(0) = p \hspace{0.3cm} \text{ and } \hspace{0.3cm} \{\gamma(t_n)\} \text{ represents } z$$ whenever $t_n \rightarrow \infty$. We refer to images of such embeddings as geodesic rays and denote them by $[p,z)$. 

Similarly, for any two distinct points $z,z' \in \bdry X$ there is an isometry $\gamma \colon \R \rightarrow X$ for which 
$$\{\gamma(-t_n)\} \text{ represents } z \hspace{0.3cm} \text{ and } \hspace{0.3cm} \{\gamma(t_n)\}\text{ represents } z'$$
whenever $t_n \rightarrow \infty$. Naturally, we will denote such geodesic lines by $(z,z')$. The hyperbolicity of $X$ then guarantees that there is a uniform constant $C$ for which
\begin{equation} \label{GromovLine}
\lab (z,z')_p - \dist(p,(z,z')) \rab \leq C
\end{equation}
whenever $z,z' \in X \cup \bdry X$ are distinct and $p \in X$.

A subset $Y \subset X$ is called \textit{quasi-convex} if there is a constant $C$ for which every geodesic segment in $X$ with endpoints in $Y$ lies in the $C$-neighborhood of $Y$. We then say that an action $\Gamma \acts X$ is \textit{quasi-convex geometric} if the action is
\begin{enumerate}
\item[(i)] isometric (each $g \in \Gamma$ acts as an isometry);
\item[(ii)] properly discontinuous (the set $\{g \in \Gamma : g(K) \cap K \neq \emptyset \}$ is finite for every compact set $K \subset X$);
\item[(iii)] quasi-convex cocompact (there is a non-empty, $\Gamma$-invariant, quasi-convex set $Y \subset X$ and a compact set $K \subset Y$ for which $Y = \bigcup_{g \in \Gamma} g(K)$).
\end{enumerate}

Let us fix such a group action $\Gamma \acts X$ and a corresponding quasi-convex set $Y$. As $Y$ is $\Gamma$-invariant, the action $\Gamma \acts Y$ is isometric, properly discontinuous, and cocompact. Recall that such actions are said to be \ti{geometric}.

For $p \in X$ fixed, the limit set $\Lambda(\Gamma)$ is the collection of points $z \in \bdry X$ that can be represented by a sequence $\{x_n\} \subset \Gamma p$. Of course, this is independent of our choice of $p$. It is not difficult to see that the orbit $\Gamma p$ and the set $Y$ are within finite Hausdorff distance from each other, so $\Lambda(\Gamma)$ coincides with $\bdry Y$, viewed as a subset of $\bdry X$. In particular, $\Lambda(\Gamma)$ is compact.

In fact, it will be convenient simply to replace $Y$ with $\Gamma p$. We lose no generality in doing this, as quasi-convexity of $Y$ implies quasi-convexity of $\Gamma p$. Thus, we take $Y=\Gamma p$ from now on.

Recall from earlier that the entropy of this action $\Gamma \acts X$ is
\begin{equation} \label{exp}
e(\Gamma) = \limsup_{R \rightarrow \infty} \frac{\log(N(R))}{R}
\end{equation}
where $N(R) = \# \{ \Gamma p \cap B_X(p,R) \}$. Under our assumptions, $e(\Gamma) < \infty$ and we can replace the ``$\limsup$" with ``$\lim$"; in fact,
$$\exp(e(\Gamma)R) \lesssim N(R) \lesssim \exp(e(\Gamma)R)$$
(see \cite[Th\'eor\`eme 7.2]{Coor93}). This quantity $e(\Gamma)$ is the coarse analog of volume entropy for Riemannian manifolds, and it is closely related to the metric regularity on $\Lambda(\Gamma)$.

\begin{theorem}[Coornaert {\cite[Section 7]{Coor93}}] \label{visualreg}
When equipped with a visual metric of parameter $\e>0$, the limit set $\Lambda(\Gamma)$ is Ahlfors regular of dimension $e(\Gamma)/\e$.
\end{theorem}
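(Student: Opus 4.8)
The plan is to build a Borel measure $\mu$ on $\Lambda=\Lambda(\Gamma)$ realizing the Ahlfors-regularity estimate \eqref{Ahlfors} with exponent $Q=e(\Gamma)/\e$, by distributing mass over the shadows of orbit points and extracting the two-sided control from the coarse self-similarity of the $\Gamma$-action together with the orbit count $N(R)\asymp e^{e(\Gamma)R}$ recorded above. Fix a visual metric $d$ on $\Lambda$ of parameter $\e$ based at $p$, so $d(\xi,\eta)\asymp e^{-\e(\xi,\eta)_p}$, and recall that we have arranged $Y=\Gamma p$ to be quasi-convex. For $x\in X$ and a constant $D$ large relative to the hyperbolicity constant, let $V(x)\subseteq\bdry X$ be the set of $\xi$ for which some geodesic ray $[p,\xi)$ meets $\overline B(x,D)$. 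Using \eqref{GromovLine} and thin triangles one checks, for $x$ with $d(p,x)=R$, that (a) $\diam_d V(x)\lesssim e^{-\e R}$ and (b) for $D$ large and $x\in\Gamma p$ --- which lies within $D$ of a rough geodesic ray from $p$ landing in $\Lambda$ --- one has $B(\xi_x,c\,e^{-\e R})\subseteq V(x)$ for some $\xi_x\in\Lambda$ and a uniform $c>0$; so shadows of orbit points are comparable to metric balls. Since $Y=\Gamma p$ is quasi-convex and every limit point is the endpoint of a genuine ray from $p$, for $D$ large the family $\{V(x):x\in\Gamma p,\ R\le d(p,x)<R+1\}$ covers $\Lambda$ for each $R$; and by proper discontinuity (together with properness of $X$), which bounds $\#\{g\in\Gamma:gp\in B(q,T)\}$ uniformly in $q$, combined with (a), any metric ball of radius comparable to $e^{-\e R}$ meets at most $N_0=N_0(\delta,D)$ members of this family.

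The core of the proof is two counting statements. The easier is the annular refinement $\#\{x\in\Gamma p:R\le d(p,x)<R+1\}\asymp e^{e(\Gamma)R}$: the upper bound is immediate from $N(R)\asymp e^{e(\Gamma)R}$, while for the lower bound a geodesic from $p$ to $gp\in B(p,m)$ stays within bounded distance of $\Gamma p$ by quasi-convexity, so $gp$ may be associated to an orbit point of the annulus lying roughly ``before'' it, and the fibres of this association have size $\lesssim N(m-R)\asymp e^{e(\Gamma)(m-R)}$ after translating, forcing the annular count to be $\gtrsim e^{e(\Gamma)R}$. The harder statement --- and the step I expect to be the main obstacle --- is a self-similar refinement: for $x_0\in\Gamma p$ with $d(p,x_0)=R_0$ and $R\ge R_0$, the number of $x\in\Gamma p$ with $R\le d(p,x)<R+1$ whose shadow $V(x)$ is nested in $V(x_0)$ (that is, $x$ lies past $x_0$ on a common ray from $p$) should be $\asymp e^{e(\Gamma)(R-R_0)}$. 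This asserts that the orbit configuration inside the cone over $V(x_0)$, rescaled by $e^{\e R_0}$, looks like the whole orbit configuration; it is where the cocompactness of $\Gamma$ on the quasi-convex set is genuinely used, and I would prove it by translating $x_0$ back toward $p$ and reinvoking the annular count, using that $\Lambda\cong\Sp^n$ contains a ``shadow past $x_0$'' of uniformly positive diameter, so that a definite proportion of the orbit points at each scale fall into it.

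Granting the counting, the remainder is bookkeeping. Choosing, via (a) and (b), points $\xi_x\in V(x)$ consistently so that $\xi_x\in V(x_0)$ whenever $V(x)$ is nested in $V(x_0)$, set $\mu_R=e^{-e(\Gamma)R}\sum_{R\le d(p,x)<R+1}\delta_{\xi_x}$; the annular count gives $\mu_R(\Lambda)\asymp1$, so along a subsequence the $\mu_R$ converge weakly to a measure $\mu$ on the compact space $\Lambda$. The self-similar count yields $\mu_R(V(x_0))\asymp e^{-e(\Gamma)R_0}$ for all large $R$ --- enlarging or shrinking $V(x_0)$ by $O(\delta)$ to pass safely to the weak limit, which costs only multiplicative constants --- and hence $\mu(V(x_0))\asymp e^{-e(\Gamma)R_0}=(e^{-\e R_0})^{Q}$ for every orbit point $x_0$ at distance $R_0$. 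Finally, given $\xi\in\Lambda$ and $0<r\le\diam\Lambda$, choose $R$ with $e^{-\e R}\asymp r$: the covering property places a shadow $V(x_0)$ with $d(p,x_0)\asymp R$ inside $B(\xi,r)$, so $\mu(B(\xi,r))\gtrsim e^{-e(\Gamma)R}\asymp r^{Q}$, while $B(\xi,r)\cap\Lambda$ lies in the union of the at most $N_0$ shadows from the annulus $[R,R+1)$ that meet it, so $\mu(B(\xi,r))\lesssim N_0\,r^{Q}$. Therefore $\mu(B(\xi,r))\asymp r^{Q}$, which is exactly Ahlfors $Q$-regularity of $\Lambda$ with $Q=e(\Gamma)/\e$; in particular $\mu$ is comparable to the $Q$-dimensional Hausdorff measure.
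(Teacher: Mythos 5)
The paper offers no proof of this statement---it is quoted from Coornaert, whose argument constructs a Patterson--Sullivan quasi-conformal density on $\Lambda(\Gamma)$ and extracts both the orbit-count asymptotics $N(R)\asymp e^{e(\Gamma)R}$ and the ball estimates from Sullivan's shadow lemma. Your architecture (shadows of orbit points comparable to balls, annular counts, a weak limit of normalized orbital counting measures) is a plausible alternative, and the final bookkeeping is fine. But the step you yourself flag as the main obstacle---the lower bound in the ``self-similar refinement,'' i.e.\ that at least $c\,e^{e(\Gamma)(R-R_0)}$ orbit points at distance about $R$ from $p$ have shadows nested in $V(x_0)$---is not actually proved, and the justification you offer does not close it. Knowing that the ``shadow past $x_0$'' has uniformly positive diameter in $\Lambda$ does not by itself imply that a definite proportion of orbit points at each scale fall into it: that implication is essentially the lower Ahlfors-regularity bound you are trying to establish, so as written the argument is circular. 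After translating by $g_0^{-1}$, the claim becomes: for every orbit point $q$ with $d(p,q)=R_0$, a definite proportion of the orbit points at distance $T$ from $p$ satisfy $(q,y)_p\le D'$; positive diameter of the corresponding boundary set gives no control on how many orbit points it absorbs.

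The gap is fixable, and it is worth recording how. An elementary route: your shadow \emph{upper} bound (which you do obtain correctly from quasi-convexity and translation) shows that the orbit points at distance $T$ from $p$ which fellow-travel $[p,q]$ for length at least $D'$ number at most $C e^{-e(\Gamma)D'}e^{e(\Gamma)T}$; since $e(\Gamma)>0$ (which requires only that $\Gamma$ be non-elementary---note the theorem has no sphere hypothesis, so $\Lambda\cong\Sp^n$ should not be invoked), choosing $D'$ large makes this less than half of the annular count $\gtrsim e^{e(\Gamma)T}$, so at least half of the orbit points at scale $T$ lie in the shadow of $B(p,D')$ seen from $q$, which is exactly the statement you need after translating back by $g_0$. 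Alternatively one follows Coornaert and builds the quasi-conformal density first. Two smaller points: your annular lower bound only localizes the count to an annulus of width $O(C)$ rather than width $1$ (harmless, but it should be said, and all annuli should be taken of that fixed width); and the input $N(R)\asymp e^{e(\Gamma)R}$ is itself Coornaert's Th\'eor\`eme 7.2, proved there by the very shadow-lemma machinery you are trying to bypass, so your argument is a derivation from that quoted fact rather than an independent proof of the regularity statement.
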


We now wish to transfer the action $\Gamma \acts X$ to a quasi-M\"obius action on $\Lambda(\Gamma)$. Until we mention otherwise, we equip $\Lambda(\Gamma)$ with a visual metric $d$ of parameter $\e$. The following lemma indicates that the induced action on $\Lambda(\Gamma)$ is strongly quasi-M\"obius.

\begin{lemma} \label{extension}
Let $g \in \Gamma$. Then $g$ extends naturally to an $\eta$-quasi-M\"obius homeomorphism of $(\Lambda(\Gamma),d)$, where $\eta(t) = Ct$, and $C$ depends only on the hyperbolicity constant of $X$ and the multiplicative constant in $d$.
\end{lemma}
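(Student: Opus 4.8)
The plan is to reduce the statement to the standard dictionary between the metric cross-ratio on $\bdry X$ and Gromov products, and then to exploit the fact that $g$ acts as a genuine \emph{isometry} of $X$ --- not merely a quasi-isometry --- which is precisely what forces the distortion function to be linear. First I would recall that every isometry $g$ of the proper, geodesic, hyperbolic space $X$ extends to a homeomorphism of $\bdry X$ (see, e.g., \cite[Chapter 3]{GH90}), and that since $g$ permutes the orbit $\Gamma p$, this extension restricts to a homeomorphism of $\Lambda(\Gamma)$ onto itself: if $\{x_n\} \subset \Gamma p$ represents $\xi$, then $\{gx_n\} \subset g(\Gamma p) = \Gamma p$ represents $g\xi$. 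It then remains only to control the cross-ratio distortion.

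Since $d$ is a visual metric of parameter $\e$, we have $d(\xi,\eta) \asymp e^{-\e(\xi,\eta)_p}$ with constants depending only on the multiplicative constant in the definition of $d$; substituting this into the definition of the cross-ratio gives, for distinct $x_1,x_2,x_3,x_4 \in \Lambda(\Gamma)$,
$$[x_1,x_2,x_3,x_4] \asymp \exp \Big( \e \big[ (x_1,x_4)_p + (x_2,x_3)_p - (x_1,x_3)_p - (x_2,x_4)_p \big] \Big).$$
Write $E_p$ for the bracketed quantity. The key point is that $E_p$ is essentially independent of the base-point: combining Lemma \ref{2.2.2} with the change-of-base-point relation $(\xi,\eta)_q = (\xi,\eta)_p + \tfrac12 b_\xi(p,q) + \tfrac12 b_\eta(p,q) + O(\delta)$, where $b_\xi(p,q) = \lim_n (d(z_n,q) - d(z_n,p))$ for $\{z_n\}$ representing $\xi$ (a quantity with oscillation $O(\delta)$ in a $\delta$-hyperbolic space), and noting that each of $x_1,x_2,x_3,x_4$ enters $E_p$ exactly once with a $+$ sign and once with a $-$ sign, all four Busemann terms cancel; hence $E_q = E_p + O(\delta)$ with implicit constant depending only on $\delta$.

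Finally I would combine these facts. Because $g$ is an isometry, $(g\xi,g\eta)_{gp} = (\xi,\eta)_p$ for all $\xi,\eta \in X \cup \bdry X$ --- for the extended product this is immediate, since the representing sequences of $g\xi$ are precisely the $g$-images of those of $\xi$ --- so $E_{gp}(gx_1,gx_2,gx_3,gx_4) = E_p(x_1,x_2,x_3,x_4)$ exactly. Replacing $E_{gp}$ by $E_p$ on the left via the base-point independence just established gives $E_p(gx_1,gx_2,gx_3,gx_4) = E_p(x_1,x_2,x_3,x_4) + O(\delta)$, and exponentiating through the visual-metric comparison yields
$$[gx_1,gx_2,gx_3,gx_4] \leq C\,[x_1,x_2,x_3,x_4]$$
with $C$ depending only on $\delta$ (equivalently, on the hyperbolicity constant of $X$) and on the multiplicative constant in $d$. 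This is the claimed $\eta$-quasi-M\"obius bound with $\eta(t) = Ct$.

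The main obstacle is bookkeeping rather than ideas: one must verify that \emph{each} additive error incurred along the way --- in passing between $d$ and $e^{-\e(\cdot)_p}$, in the change-of-base-point formula, in the oscillation of $b_\xi(p,q)$, and in the infimum defining $(\xi,\eta)_p$ on the boundary --- is bounded by a constant multiple of $\delta$ alone, so that upon exponentiation these errors collapse into a single multiplicative constant of the allowed form. The cancellation of the Busemann terms in $E_p$ is exactly what distinguishes this argument from the general fact that quasi-isometries induce quasi-M\"obius boundary maps: it is precisely the isometry hypothesis that upgrades ``quasi-M\"obius'' to ``\emph{strongly} quasi-M\"obius.''
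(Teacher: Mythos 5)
Your proposal is correct and follows essentially the same route as the paper: extend $g$ via images of representing sequences, use that an isometry preserves Gromov products exactly, and observe that the alternating four-term combination governing the cross-ratio is base-point independent up to errors controlled by $\delta$ and the visual-metric constants. The only difference is bookkeeping: the paper performs the cancellation on representing sequences in $X$, where $(a,c)_p+(b,d)_p-(a,d)_p-(b,c)_p=\tfrac12\lp d_X(a,d)+d_X(b,c)-d_X(a,c)-d_X(b,d)\rp$ is \emph{exactly} base-point free, thereby avoiding the Busemann-function change-of-base-point formula (with its $O(\delta)$ oscillation) that you invoke at the boundary level.
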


This lemma and its proof are well known, though most references deal with the more general case when $g$ is assumed to be only a quasi-isometry. In that setting, $g$ still extends to a quasi-M\"obius homeomorphism of the boundary, but the distortion function $\eta$ might not be linear. One does, however, recover a linear distortion function when $g$ is a rough isometry. The proof of Lemma \ref{extension} follows standard extension arguments (see, for example, \cite[Section 4]{Paul96}) and makes use of the following important fact: for $x_1,x_2,x_3,x_4 \in X$, the cross-difference 
$$\begin{aligned}
(x_1,x_3)_p &+ (x_2,x_4)_p - (x_1,x_4)_p - (x_2,x_3)_p \\
&= \tfrac{1}{2} \lp d_X(x_1,x_4) + d_X(x_2,x_3) - d_X(x_1,x_3) - d_X(x_2,x_4) \rp,
\end{aligned}$$
is independent of the chosen base-point $p \in X$. In particular, isometries preserve cross-differences, and so their extensions preserve metric cross-ratios, up to a multiplicative constant.

Abusing terminology, we will continue to let $g$ denote the extension of $g \in \Gamma$ to $\Lambda(\Gamma)$. It is clear that composition is preserved in the extension, so we indeed obtain a strongly quasi-M\"obius group action $\Gamma \acts \Lambda(\Gamma)$. The next lemma, well known in this subject, shows that the cocompactness and proper discontinuity of $\Gamma \acts Y$ extends to cocompactness and proper discontinuity on triples for $\Gamma \acts \Lambda(\Gamma)$. See, for example, \cite[Sections 8.2.K--8.2.Q]{Grom87} for further discussion.

\begin{lemma} \label{triples}
If $\Lambda(\Gamma)$ has at least three points, then the induced action $\Gamma \acts \Lambda(\Gamma)$ is
\begin{enumerate}
\item[\textup{(i)}] cocompact on triples,
\item[\textup{(ii)}] properly discontinuous on triples: for each triple $z_1,z_2,z_3 \in \Lambda(\Gamma)$ of distinct points, for every $\tau > 0$ there are only finitely many $g \in \Gamma$ for which $gz_1$, $gz_2$, and $gz_3$ are $\tau$-separated.
\end{enumerate}
\end{lemma}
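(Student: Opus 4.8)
The plan is to translate the two assertions into statements about Gromov products and bi-infinite geodesics in $X$, and then exploit the geometry of ideal triangles together with the quasi-convexity of $Y = \Gamma p$. Since $d$ is a visual metric of parameter $\e$, we have $e^{-\e(\xi,\eta)_p} \lesssim d(\xi,\eta) \lesssim e^{-\e(\xi,\eta)_p}$, and since each $g\in\Gamma$ acts isometrically on $X$ we have $(g\xi,g\eta)_p = (\xi,\eta)_{g^{-1}p}$ (the boundary version of the identity used in the proof of Lemma \ref{extension}). Hence a triple $gz_1,gz_2,gz_3$ is $\tau$-separated in $(\Lambda(\Gamma),d)$ precisely when $(z_i,z_j)_{g^{-1}p} \leq M$ for all $i\neq j$, for a threshold $M=M(\tau,\e)$; and by \eqref{GromovLine} this is equivalent, up to a uniform additive constant, to $g^{-1}p$ lying within bounded distance of each of the three bi-infinite geodesics $(z_i,z_j)$ (which exist, and are unique up to bounded Hausdorff distance, because $X$ is proper, geodesic, and hyperbolic). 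So both parts reduce to understanding the set of points of $X$ that are simultaneously close to all three sides of the ideal triangle with vertices $z_1,z_2,z_3 \in \bdry X$.

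For (i): thinness of triangles in a hyperbolic space---applied to the ideal triangle $(z_1,z_2,z_3)$, e.g.\ by approximating it with finite triangles and using properness---furnishes a ``center'' $w\in X$ within a distance $C_0 = C_0(\delta)$ of each of the three sides, where $C_0$ depends only on the hyperbolicity constant of $X$. Now I use that $Y = \Gamma p$ is quasi-convex: since $z_i \in \bdry Y = \Lambda(\Gamma)$, approximating each $z_i$ by orbit points and taking an Arzel\`a--Ascoli limit of the connecting geodesic segments (all of which lie in a fixed neighborhood of $Y$) shows that each geodesic $(z_i,z_j)$ lies in the $C'$-neighborhood of $Y$ for a uniform $C'$. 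Hence $w$, being close to $(z_1,z_2)$, lies within $C_0 + C'$ of some point $q \in Y$, and then $q$ is within $2C_0 + C'$ of every side. By \eqref{GromovLine}, $(z_i,z_j)_q \leq M_0$ for a uniform $M_0$; choosing $g\in\Gamma$ with $g^{-1}p = q$ gives $(gz_i,gz_j)_p \leq M_0$ for all pairs, so $d(gz_i,gz_j) \gtrsim e^{-\e M_0} =: \delta_0 > 0$, with $\delta_0$ independent of the triple. This is the constant required by Definition \ref{coc}.

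For (ii): fix distinct $z_1,z_2,z_3\in\Lambda(\Gamma)$ and $\tau>0$, and suppose $gz_1,gz_2,gz_3$ are $\tau$-separated. As above, $g^{-1}p$ lies within a distance $R = R(\tau,\e,\delta)$ of each of the three geodesics $(z_i,z_j)$. The key geometric point is that the set of points of $X$ with this property is bounded, say contained in $\cl{B}(w,D)$ with $D = D(R,\delta,z_1,z_2,z_3)$: along each side of the ideal triangle, two of the sides fellow-travel toward a common ideal vertex while the third stays far away, so a point that is near one side but lies far out along it toward some vertex is forced to be far from one of the other two sides; thus $g^{-1}p$ can only lie near the bounded ``core'' of the ideal triangle around $w$. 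Consequently $g^{-1}p \in \Gamma p \cap \cl{B}(w,D)$. Since $X$ is proper, $\cl{B}(w,D)$ is compact, and since $\Gamma\acts X$ is properly discontinuous, only finitely many $h\in\Gamma$ satisfy $hp\in\cl{B}(w,D)$; hence only finitely many $g$ can have a $\tau$-separated image triple.

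The bookkeeping with Gromov products, \eqref{GromovLine}, and Lemma \ref{2.2.2} is routine. The main obstacle is the ideal-triangle geometry that underlies both parts: establishing that the ``core'' of the ideal triangle on $z_1,z_2,z_3$ is uniformly bounded (for (ii)) and that it lies within bounded distance of the quasi-convex set $Y$ (for (i)). Once these are set up carefully, the conclusions follow at once.
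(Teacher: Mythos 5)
Your argument is correct and follows essentially the same route as the paper: reduce $\tau$-separation to bounds on Gromov products and hence (via \eqref{GromovLine}) to $g^{-1}p$ lying near the sides of the ideal triangle on $z_1,z_2,z_3$; for (i) use the rough center of that triangle together with the fact (from quasi-convexity of $Y=\Gamma p$, via limits of geodesic segments between orbit points) that the sides lie in a uniform neighborhood of $Y$; for (ii) use boundedness of the rough center plus properness of $X$ and proper discontinuity of $\Gamma\acts X$. The only cosmetic difference is that the paper packages the finiteness step in (ii) with a compact set $K$ containing the relevant orbit points and $p$, whereas you use a closed ball $\cl{B}(w,D)$ and $K=\cl{B}(w,D)\cup\{p\}$ implicitly, which is equivalent.
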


Before setting out to prove Theorem \ref{rigidity}, it is necessary to explain what it means for a rough isometry $\Phi \colon S \rightarrow Y$ to be ``roughly equivariant" with respect to a geometric action of $\Gamma$ on $S$. Of course, we will be interested in the case when $S=\Hy^{n+1}$.

\begin{definition}
A map $\Phi \colon S \rightarrow Y$ is \ti{roughly equivariant} with respect to the actions $\Gamma \acts S$ and $\Gamma \acts Y$ if there is a constant $C$ for which
$$d_X \lp \Phi(gx),g\Phi(x) \rp \leq C$$
for each $x \in S$ and $g \in \Gamma$.
\end{definition}

We will shortly need the fact that $\bdry \Hy^{n+1}$ can be identified with $\Sp^n$. Under this identification, the chordal metric on $\Sp^n$ is a visual metric of parameter 1, cf.\ \cite[Section 2.4.3]{BS07}.

\subsection{Proof of Theorem \ref{rigidity}}

Let us return now to the set-up in Theorem \ref{rigidity}. Fix $\Gamma \acts X$ as in the statement of the theorem, and recall that $\Lambda(\Gamma)$ is assumed to be a topological sphere. Let $Y = \Gamma p$, so that $Y$ is quasi-convex and is an $\AC_u(-1)$-space. Using the geometric action $\Gamma \acts Y$, we can verify the following lemma.

\begin{lemma} \label{Yvisual}
There is a uniform constant $C$ such that each $y \in Y$ lies in a $C$-rough geodesic ray in $Y$, starting at $p$. In other words, $Y$ is visual, in the sense of Definition \ref{visualdef}.
\end{lemma}

\begin{proof} 
Fix $x \in Y$. We first want to find a geodesic line $(z,z')$, with $z, z' \in \Lambda(\Gamma)$, that passes close to $x$. To do this, choose two distinct points $w, w' \in \Lambda(\Gamma)$. The quasi-convexity of $Y$ ensures that the geodesic line $(w,w')$ in $X$ lies in the $C_1$-neighborhood of $Y$, for some uniform constant $C_1$. In particular, there is a point $x' \in Y$ for which $\dist(x', (w,w')) \leq C_1$, and there is $g \in \Gamma$ with $gx' = x$. Thus, $\dist(x,(gw,gw')) \leq C_1$. Let $z=gw$ and $z'=gw'$ so that $\dist(x,(z,z')) \leq C_1$.

Consider now the geodesic triangle with sides $[p,z)$, $[p,z')$, and $(z,z')$. The $\delta$-inequality in Lemma \ref{2.2.2}(ii) is valid for points in $X \cup \bdry X$, and this translates into a thinness condition for geodesic triangles, even those with some vertices in $\bdry X$. Consequently, 
$$\dist(x,[p,z)\cup [p,z')) \leq \dist(x,(z,z')) +C_2 \leq C_1+C_2,$$
where $C_2$ is uniform. Thus, we may assume that $\dist(x,[p,z)) \leq C_3$ for a uniform constant $C_3$.

It now suffices to show that $[p,z)$ is in the $C_4$-neighborhood of $Y$, where again $C_4$ is a uniform constant. Indeed, this easily implies that we can find a $C$-rough geodesic ray in $Y$, starting at $p$, and passing through $x$. As $z \in \Lambda(\Gamma)$, there is a sequence $\{x_n\} \subset Y$ that represents $z$, and by quasi-convexity of $Y$, the geodesic segments $[p,x_n]$ lie in the $C_4$-neighborhood of $Y$. The parameterized geodesic ray $[p,z)$ is simply the limit of the parameterized segments $[p,x_n]$ (in the topology of uniform convergence on compact sets), so we immediately see that $[p,z)$ is also in the $C_4$-neighborhood of $Y$.
\end{proof}

As $Y$ is visual and $\AC_u(-1)$, we can apply Proposition \ref{interval} to obtain, for each $0<\e<1$, a visual metric on $\bdry Y$ of parameter $\e$. Recall, though, that $\bdry Y$ coincides with $\Lambda(\Gamma)$. Thus, there are visual metrics on $\Lambda(\Gamma)$ for all parameters $0<\e<1$. By Theorem \ref{visualreg}, these metrics are Ahlfors regular of dimension $e(\Gamma)/\e$. In particular, the Hausdorff dimension of $\Lambda(\Gamma)$ with this metric is $e(\Gamma)/\e$. 

On the other hand, all visual metrics induce the same topology on $\Lambda(\Gamma)$; in our case, this is the topology of the standard $n$-dimensional sphere. Recalling that the topological dimension of a compact metric space always bounds the Hausdorff dimension from below (cf.\ \cite[Theorem 7.2]{HW41}), we obtain
$$\frac{e(\Gamma)}{\e} = \dim_H(\Lambda(\Gamma),d_{\e}) \geq \dim_{\topo}(\Lambda(\Gamma),d_{\e}) = n$$
for all $0<\e<1$. This gives $e(\Gamma) \geq n$, which is the first part of the theorem.

It remains to prove the rigidity statement in Theorem \ref{rigidity}, and this task will occupy us for the remainder of the section. The ``if" part of the statement follows easily from standard facts about hyperbolic metric spaces. Namely, if $\Phi \colon \Hy^{n+1} \rightarrow Y$ is a rough isometry, then the fact that $\bdry \Hy^{n+1} = \Sp^n$ admits a visual metric of parameter 1 implies that $\bdry Y = \Lambda(\Gamma)$ does as well. Equipped with these metrics, we can extend $\Phi$ to a bi-Lipschitz map of the boundaries:
$$\Phi \colon \Sp^n \rightarrow \Lambda(\Gamma).$$
In particular,
$$e(\Gamma) = \dim_H \Lambda(\Gamma) = n.$$

Let us now address the converse statement. Thus, we assume that $e(\Gamma)=n$ and wish to construct the desired action $\Gamma \acts \Hy^{n+1}$ and map $\Phi \colon \Hy^{n+1} \rightarrow Y$.

Fix a visual parameter $0<\e<1$ for $Z=\Lambda(\Gamma)$, which we also view as $\bdry Y$, and let $d$ denote a corresponding visual metric. Then $(Z,d)$ is Ahlfors $n/\e$-regular and Lemma \ref{extension} implies that there is a strongly quasi-M\"obius action $\Gamma \acts Z$. Moreover, Lemma \ref{triples}(i) guarantees that this action is cocompact on triples. We claim that the discrete length condition appearing in Theorem \ref{mainthm} follows from the $\AC_u(-1)$ assumption on $Y$. Indeed, if 
$$u=z_0,z_1,\ldots,z_l=v$$
is a discrete $\delta_0$-path between $u$ and $v$ in $Z$, then Proposition \ref{chains} gives
$$(u,v)_p \geq \min_{1\leq i \leq l} (z_i,z_{i-1})_p - \log l - c$$
for some uniform constant $c$. Translating this to the metric, we obtain
$$d(u,v) \lesssim l^{\e} \cdot \max_{1\leq i \leq l} d(z_i,z_{i-1}) \lesssim \delta_0 l^{\e},$$
and rearranging gives $l \gtrsim \lp d(u,v)/\delta_0 \rp^{1/\e}$. The conditions in Theorem \ref{mainthm} are therefore satisfied, and so we obtain a metric $d_{new}$ for which $d$ and $d_{new}^{\e}$ are bi-Lipschitz equivalent. In particular, $d_{new}$ is a visual metric on $Z$ of parameter $1$. We also obtain a bi-Lipschitz map $f \colon \Sp^n \rightarrow (Z,d_{new})$ for which $f^{-1} \Gamma f$ is a M\"obius action on the sphere. Observe that this action is cocompact on triples.

Furthermore, as $\Gamma \acts X$ is properly discontinuous, the induced boundary action $\Gamma \acts Z$ will be properly discontinuous on triples by Lemma \ref{triples}(ii). This property is preserved under conjugation by homeomorphisms, so the M\"obius action $f^{-1} \Gamma f$ will also be properly discontinuous on triples.

By the correspondence between M\"obius transformations on $\Sp^n$ and isometries of $\Hy^{n+1}$, for each $g \in \Gamma$, there is a unique isometry of $\Hy^{n+1}$ that induces $f^{-1}gf$ on the boundary. This gives us a geometric action $\Gamma \acts \Hy^{n+1}$. Indeed, the cocompactness and proper discontinuity on triples for $f^{-1} \Gamma f$ translate into cocompactness and proper discontinuity for $\Gamma \acts \Hy^{n+1}$.

To construct $\Phi$, we use standard arguments about extending bi-Lipschitz maps between boundaries of hyperbolic metric spaces to rough isometries of the hyperbolic spaces themselves. Actually, our argument will mimic the proof of Theorem 7.1.2 in \cite{BS07}. Important to this construction is again the fact that the chordal metric on $\Sp^n$ is a visual metric of parameter $1$ under the identification of $\Sp^n$ with $\bdry \Hy^{n+1}$. Thus, $f \colon \Sp^n \rightarrow (Z,d_{new})$ is a bi-Lipschitz homeomorphism between two spaces whose metrics are of the form $e^{-(u,v)}$, up to multiplicative constants.

Fix $x \in \Hy^{n+1}$; for concreteness we will use the unit ball model of $\Hy^{n+1}$. Then there is a geodesic ray $[0,z)$ in $\Hy^{n+1}$, ending at some $z \in \Sp^n$, with $x \in [0,z)$. Let $\gamma \colon [0,\infty) \rightarrow \Hy^{n+1}$ be the unit speed parameterization of this ray, and let $t = d_{\Hy^{n+1}}(0,x)$ so that $\gamma(t) = x$. Now, as $f(z)$ is in $\Lambda(\Gamma)$, we also know that there is a geodesic ray $[p,f(z))$ in $X$ that lies in the $C_1$-neighborhood of $Y$ (cf.\ the proof of Lemma \ref{Yvisual}). Let $\tilde{\gamma} \colon [0,\infty) \rightarrow X$ be the geodesic parameterization of this ray. We then define $\Phi(x)$ to be a point in $Y$ that is of distance at most $C_1$ from $\tilde{\gamma}(t)$. Of course, this definition depends on the choice of a ray $[p,f(z))$ and on the choice of a point in $Y$. Making different choices, however, yields points that are within distance $C_2$ of each other, where $C_2$ is uniform.

The map $\Phi \colon \Hy^{n+1} \rightarrow Y$ thus defined induces, almost by definition, the homeomorphism $f$ between $\Sp^n$ and $Z$. Moreover, we claim that $\Phi$ is a rough isometry. To prove the desired bounds on $d_X(\Phi(x),\Phi(y))$, it suffices to show that
\begin{equation} \label{Phi}
\lab (x,y)_0 - (\Phi(x),\Phi(y))_p \rab \leq C_3
\end{equation}
for a constant $C_3$ independent of $x,y \in \Hy^{n+1}$. Indeed, the definition of $\Phi$ guarantees that 
\begin{equation} \label{Phi2}
\lab d_{\Hy^{n+1}}(0,x) - d_X(p,\Phi(x)) \rab \hspace{0.05cm}, \hspace{0.05cm} \lab d_{\Hy^{n+1}}(0,y) - d_X(p,\Phi(y)) \rab \leq C_4
\end{equation}
for a uniform constant $C_4$.

Fix $x,y \in \Hy^{n+1}$ and let $u,v \in \Sp^n$ be boundary points for which $x \in [0,u)$ and $y \in [0,v)$. The metric hyperbolicity of $\Hy^{n+1}$ implies that
$$\lab (x,y)_0 - \min \{d_{\Hy^{n+1}}(0,x), d_{\Hy^{n+1}}(0,y), (u,v)_0 \} \rab$$
is uniformly bounded (cf.\ \cite[Lemma 7.1.3]{BS07}). Similarly, the hyperbolicity of $X$ ensures that
$$\lab (\Phi(x),\Phi(y))_p - \min\{ d_X(p,\Phi(x)), d_X(p, \Phi(y)), (f(u),f(v))_p \} \rab $$
is uniformly bounded. Using \eqref{Phi2} again, it is clear that \eqref{Phi} would follow from the uniform boundedness of
$$\lab (u,v)_0 - (f(u),f(v))_p \rab.$$
This, however, is an immediate consequence of the fact that $f$ is bi-Lipschitz with respect to visual metrics of parameter 1; observe that the bound will depend on the bi-Lipschitz constant of $f$. Thus, \eqref{Phi} holds, and so
$$d_{\Hy^{n+1}}(x,y) - C_5 \leq d_X(\Phi(x),\Phi(y)) \leq d_{\Hy^{n+1}}(x,y) + C_5,$$
where $C_5$ is uniform. Note also that the definition of $\Phi$, along with the facts that $f$ is surjective and $Y$ is visual, imply that each point in $Y$ is of distance at most $C_6$ from $\Phi(\Hy^{n+1})$. Thus, $\Phi$ is a rough isometry.

Finally, we must show that $\Phi$ is roughly equivariant. To this end, let $g \in \Gamma$ and consider the rough isometry $g^{-1} \Phi g \colon \Hy^{n+1} \rightarrow Y$. Observe that it extends to the map 
$$g^{-1} f g \colon \Sp^n \rightarrow \Lambda(\Gamma).$$
As $f$ is equivariant with respect to the boundary actions, we know that $g^{-1} f g = f$. Thus, $g^{-1} \Phi g$ and $\Phi$ are rough isometries whose boundary extensions coincide. This implies that there is a uniform constant $C$ for which
$$d_X(g^{-1}\Phi(gx), \Phi(x)) \leq C$$
whenever $x \in \Hy^{n+1}$ (cf.\ \cite[Proposition 9.1]{BKM09}). Hence,
$$d_X(\Phi(gx), g\Phi(x)) \leq C$$
for each $x \in \Hy^{n+1}$ and $g \in \Gamma$. This completes the proof of Theorem \ref{rigidity}.

\begin{remark} \label{n=1r}
Many of the arguments we have used are valid in the case $n=1$ as well. In particular, we can conclude that $e(\Gamma) \geq 1$ and that if $\Gamma p$ is roughly isometric to $\Hy^2$, then $e(\Gamma)=1$. The notable exception is the argument that allows us to conjugate the action $\Gamma \acts \Lambda(\Gamma)$ to a M\"obius action on $\Sp^1$ \ti{by a bi-Lipschitz map}. We discussed this issue in Remark \ref{n=1conj}, where we also indicated that the conjugation is possible with a quasisymmetric map. By standard extension arguments similar to those we used above to construct $\Phi$, one can extend the quasisymmetric conjugation map to a \ti{quasi-isometry} between $\Hy^2$ and $\Gamma p$. This quasi-isometry will still be roughly equivariant with respect to the actions of $\Gamma$ on $\Hy^2$ and on $\Gamma p$ by the same arguments we used above. 

Thus, when $n=1$, we can say only that $\Phi$ will be a quasi-isometry, rather than a rough isometry. We leave as an open question whether the stronger conclusion holds.
\end{remark}


\begin{bibdiv}
\begin{biblist}

\bib{BCG96}{article}{
      author={Besson, G.},
      author={Courtois, G.},
      author={Gallot, S.},
       title={Minimal entropy and {M}ostow's rigidity theorems},
        date={1996},
        ISSN={0143-3857},
     journal={Ergodic Theory Dynam.\ Systems},
      volume={16},
      number={4},
       pages={623\ndash 649},
}

\bib{Bon06}{incollection}{
      author={Bonk, M.},
       title={Quasiconformal geometry of fractals},
        date={2006},
   booktitle={International {C}ongress of {M}athematicians. {V}ol. {II}},
   publisher={Eur. Math. Soc., Z\"urich},
       pages={1349\ndash 1373},
}

\bib{BF06}{article}{
      author={Bonk, M.},
      author={Foertsch, T.},
       title={Asymptotic upper curvature bounds in coarse geometry},
        date={2006},
        ISSN={0025-5874},
     journal={Math. Z.},
      volume={253},
      number={4},
       pages={753\ndash 785},
}

\bib{BK02}{article}{
      author={Bonk, M.},
      author={Kleiner, B.},
       title={Rigidity for quasi-{M}\"obius group actions},
        date={2002},
        ISSN={0022-040X},
     journal={J. Differential Geom.},
      volume={61},
      number={1},
       pages={81\ndash 106},
}

\bib{BK04}{article}{
      author={Bonk, M.},
      author={Kleiner, B.},
       title={Rigidity for quasi-{F}uchsian actions on negatively curved
  spaces},
        date={2004},
        ISSN={1073-7928},
     journal={Int. Math. Res. Not.},
      number={61},
       pages={3309\ndash 3316},
}

\bib{BKM09}{article}{
      author={Bonk, M.},
      author={Kleiner, B.},
      author={Merenkov, S.},
       title={Rigidity of {S}chottky sets},
        date={2009},
        ISSN={0002-9327},
     journal={Amer. J. Math.},
      volume={131},
      number={2},
       pages={409\ndash 443},
}

\bib{Bour96}{article}{
      author={Bourdon, M.},
       title={Structure conforme au bord et flot g\'eod\'esique d'un {${\rm
  CAT}(-1)$}-espace},
        date={1995},
        ISSN={0013-8584},
     journal={Enseign. Math. (2)},
      volume={41},
      number={1\ndash 2},
       pages={63\ndash 102},
}

\bib{Bour96IHES}{article}{
      author={Bourdon, M.},
       title={Sur le birapport au bord des {${\rm CAT}(-1)$}-espaces},
        date={1996},
        ISSN={0073-8301},
     journal={Inst. Hautes \'Etudes Sci. Publ. Math.},
      number={83},
       pages={95\ndash 104},
}

\bib{BS07}{book}{
      author={Buyalo, S.},
      author={Schroeder, V.},
       title={Elements of asymptotic geometry},
      series={EMS Monographs in Mathematics},
   publisher={European Mathematical Society (EMS), Z\"urich},
        date={2007},
}

\bib{Coor93}{article}{
      author={Coornaert, M.},
       title={Mesures de {P}atterson-{S}ullivan sur le bord d'un espace
  hyperbolique au sens de {G}romov},
        date={1993},
        ISSN={0030-8730},
     journal={Pacific J. Math.},
      volume={159},
      number={2},
       pages={241\ndash 270},
}

\bib{Der69}{article}{
      author={Derrick, W.~R.},
       title={A volume-diameter inequality for {$n$}-cubes},
        date={1969},
        ISSN={0021-7670},
     journal={J. Analyse Math.},
      volume={22},
       pages={1\ndash 36},
}

\bib{GH90}{book}{
      editor={Ghys, {\'E}.},
      editor={de~la Harpe, P.},
       title={Sur les groupes hyperboliques d'apr\`es {M}ikhael {G}romov},
      series={Progress in Mathematics},
   publisher={Birkh\"auser Boston},
     address={Boston, MA},
        date={1990},
      volume={83},
}

\bib{Grom87}{article}{
   author={Gromov, M.},
   title={Hyperbolic groups},
   conference={
      title={Essays in group theory},
   },
   book={
      series={Math. Sci. Res. Inst. Publ.},
      volume={8},
      publisher={Springer, New York},
   },
   date={1987},
   pages={75--263},
}

\bib{Grom99}{book}{
      author={Gromov, M.},
       title={Metric structures for {R}iemannian and non-{R}iemannian spaces},
      series={Progress in Mathematics},
   publisher={Birkh\"auser Boston},
     address={Boston, MA},
        date={1999},
      volume={152},
}

\bib{HaPil10}{article}{
      author={Ha{\"{\i}}ssinsky, P.},
      author={Pilgrim, K.~M.},
       title={Coarse expanding conformal dynamics},
        date={2009},
        ISSN={0303-1179},
     journal={Ast\'erisque},
      number={325},
}

\bib{Ham90}{article}{
      author={Hamenst{\"a}dt, U.},
       title={Entropy-rigidity of locally symmetric spaces of negative
  curvature},
        date={1990},
        ISSN={0003-486X},
     journal={Ann. of Math. (2)},
      volume={131},
      number={1},
       pages={35\ndash 51},
}

\bib{Hin90}{article}{
      author={Hinkkanen, A.},
       title={The structure of certain quasisymmetric groups},
        date={1990},
        ISSN={0065-9266},
     journal={Mem. Amer. Math. Soc.},
      volume={83},
      number={422},
       pages={iv+87},
}

\bib{HW41}{book}{
      author={Hurewicz, W.},
      author={Wallman, H.},
       title={Dimension {T}heory},
      series={Princeton Mathematical Series, v. 4},
   publisher={Princeton University Press},
     address={Princeton, N.J.},
        date={1941},
}

\bib{Iva96}{article}{
      author={Ivanov, N.~V.},
       title={Action of {M}\"obius transformations on homeomorphisms: stability
  and rigidity},
        date={1996},
        ISSN={1016-443X},
     journal={Geom. Funct. Anal.},
      volume={6},
      number={1},
       pages={79\ndash 119},
}

\bib{KapB01}{incollection}{
      author={Kapovich, I.},
      author={Benakli, N.},
       title={Boundaries of hyperbolic groups},
        date={2002},
   booktitle={Combinatorial and geometric group theory ({N}ew {Y}ork,
  2000/{H}oboken, {NJ}, 2001)},
      series={Contemp. Math.},
      volume={296},
   publisher={Amer. Math. Soc.},
     address={Providence, RI},
       pages={39\ndash 93},
}

\bib{Mann79}{article}{
      author={Manning, A.},
       title={Topological entropy for geodesic flows},
        date={1979},
        ISSN={0003-486X},
     journal={Ann. of Math. (2)},
      volume={110},
      number={3},
       pages={567\ndash 573},
}

\bib{Mar06}{article}{
      author={Markovic, V.},
       title={Quasisymmetric groups},
        date={2006},
        ISSN={0894-0347},
     journal={J. Amer. Math. Soc.},
      volume={19},
      number={3},
       pages={673\ndash 715},
}

\bib{Most73}{book}{
      author={Mostow, G.~D.},
       title={Strong rigidity of locally symmetric spaces},
   publisher={Princeton University Press},
     address={Princeton, N.J.},
        date={1973},
        note={Annals of Mathematics Studies, No. 78},
}

\bib{Paul96}{article}{
   author={Paulin, F.},
   title={Un groupe hyperbolique est d\'etermin\'e par son bord},
   journal={J. London Math. Soc. (2)},
   volume={54},
   date={1996},
   number={1},
   pages={50--74},
}

\bib{Spa04}{incollection}{
      author={Spatzier, R.J.},
       title={An invitation to rigidity theory},
        date={2004},
   booktitle={Modern dynamical systems and applications},
   publisher={Cambridge Univ. Press},
     address={Cambridge},
       pages={211\ndash 231},
}

\bib{Tro90}{incollection}{
      author={Troyanov, M.},
       title={Espaces \`a courbure n\'egative et groupes hyperboliques},
        date={1990},
   booktitle={Sur les groupes hyperboliques d'apr\`es {M}ikhael {G}romov},
      series={Progr. Math.},
      volume={83},
   publisher={Birkh\"auser Boston},
     address={Boston, MA},
       pages={47\ndash 66},
}

\bib{Tuk86}{article}{
      author={Tukia, P.},
       title={On quasiconformal groups},
        date={1986},
        ISSN={0021-7670},
     journal={J. Analyse Math.},
      volume={46},
       pages={318\ndash 346},
}

\bib{Yin11}{book}{
      author={Yin, Q.},
       title={Latt\`es {M}aps and {C}ombinatorial {E}xpansion},
   publisher={ProQuest LLC, Ann Arbor, MI},
        date={2011},
        note={Thesis (Ph.D.)--University of Michigan},
}

\end{biblist}
\end{bibdiv}

\end{document}